\theoremstyle{plain} \numberwithin{equation}{section}
\newtheorem{theo}{Theorem}[section]
\newtheorem{coro}[theo]{Corollary}
\newtheorem{prop}[theo]{Proposition}
\newtheorem{lemm}[theo]{Lemma}
\theoremstyle{definition}
\newtheorem*{defi}{Definition}
\newtheorem{exam}[theo]{Example}
\theoremstyle{remark}
\newtheorem*{rema}{Remark}
\newtheorem*{note}{Note}
\def\Z{\mathbb Z}
\def\R{\mathbb R}
\def\bB{\bar B}
\def\G{\Gamma}
\def\EA{\mathcal E_A}
\def\EB{\mathcal E_B}
\def\tEA{\bar{\mathcal E}_A}
\def\tEB{\bar{\mathcal E}_B}
\def\Q{\mathbb Q}
\def\ZZ{\Z/2}
\def\f{\tilde f}
\def\MH{\mathcal H}
\def\T{\mathfrak B}
\def\bA{\bar A}
\def\OP{(\text{Op1})}
\def\OC{(\text{Op2})}
\def\OR{(\text{Op3})}
\DeclareMathOperator{\rank}{rank}
\DeclareMathOperator{\row}{row}
\def\Phik{\Phi_k}
\def\Philk{\Phi^{\ell,m}}
\def\phik{\phi_k}
\def\philk{\phi^{\ell,m}}
\def\fk{\tilde f_k}
\def\flk{\tilde f^{\ell,m}}
\def\slides#1#2{\diamond #1 #2}
\def\G{\Gamma}
\def\f{\tilde f}
\def\cormatrix#1#2{[#1]_{#2}}
\def\Diff#1{\mathcal D_#1}
\def\Ori#1{\mathcal O_#1}
\def\Sym#1{\mathcal S_#1}
\def\abs#1{\lvert #1\rvert}
\def\arxiv#1{\href{http://arXiv.org/abs/#1}{arXiv:#1}}
\def\MR#1{\href{http://www.ams.org/mathscinet-getitem?mr=#1}{MR #1}}
\begin{document}

\title[Real Bott manifolds and acyclic digraphs] {Classification of real Bott manifolds and acyclic digraphs}

\author{Suyoung Choi}
\address{Department of Mathematics, Ajou University, San 5, Woncheondong, Yeongtonggu, Suwon 16499, Republic of Korea}
\email{schoi@ajou.ac.kr}

\author{Mikiya Masuda}
\address{Department of Mathematics, Osaka City University, Sumiyoshi-ku, Osaka 558-8585, Japan.}
\email{masuda@sci.osaka-cu.ac.jp}

\author{Sang-il Oum}
\address{Department of Mathematical Sciences, KAIST, 291 Daehakro, Yuseong-gu, Daejeon 34141, Republic of Korea}
\email{sangil@kaist.edu}

%\thanks{Correspondence to be sent to choi@sci.osaka-cu.ac.jp, tel:+81-6-6605-2628 or fax:+81-6-6605-2515}
\thanks{The first author was supported by Basic Science Research Program through the National Research Foundation of Korea(NRF) funded by the Ministry of Education(NRF-2011-0024975) and TJ Park Science Fellowship.
  The second author was partially supported by Grant-in-Aid for Scientific Research 19204007.
  The third author was supported by Basic Science Research Program through the National Research Foundation of Korea(NRF) funded by the Ministry of Science, ICT \& Future Planning (2011-0011653).}

\keywords{real toric manifold, real Bott manifold, real Bott tower, acyclic digraph, local complementation, flat riemannian manifold, toral rank conjecture}
\subjclass[2000]{Primary 37F20, 57R91, 05C90; Secondary 53C25, 14M25}

\date{\today}
\maketitle

\begin{abstract}
  We completely characterize real Bott manifolds up to affine diffeomorphism in terms of three simple matrix operations on square binary matrices obtained from strictly upper triangular matrices by permuting rows and columns simultaneously.
  We also prove that any graded ring isomorphism between the cohomology rings of real Bott manifolds with $\ZZ$ coefficients is induced by an affine diffeomorphism between the real Bott manifolds.

  Our characterization can also be described in terms of graph operations on directed acyclic graphs.
  Using this combinatorial interpretation, we prove that the decomposition of a real Bott manifold into a product of indecomposable real Bott manifolds is unique up to permutations of the indecomposable factors.
  Finally, we produce some numerical invariants of real Bott manifolds from the viewpoint of graph theory and discuss their topological meaning. As a by-product, we prove that the toral rank conjecture holds for real Bott manifolds.
\end{abstract}

\maketitle

\section{Introduction} \label{sect:intro}
A manifold  $M$ is called a \emph{real Bott manifold} if there is a sequence of $\R P^1$ bundles
\begin{equation} \label{tower}
    M=M_n\stackrel{\R P^1}\longrightarrow M_{n-1}\stackrel{\R P^1}\longrightarrow \cdots\stackrel{\R P^1}\longrightarrow M_1 \stackrel{\R P^1}\longrightarrow M_0=\{\textrm{a point}\},
\end{equation}
such that for each $j\in \{1,2,\ldots,n\}$, $M_j\to M_{j-1}$ is the projective bundle of the Whitney sum of a real line bundle $L_{j-1}$ and the trivial real line bundle over $M_{j-1}$.
The sequence \eqref{tower} is called a \emph{real Bott tower} of height $n$, and it is a real analogue of a Bott tower introduced by Grossberg and Karshon~\cite{gr-ka94}.
A real Bott manifold naturally supports an action of an elementary abelian $2$-group.
In fact, Kamishima and Masuda~\cite{ka-ma09} proved that a manifold is a real Bott manifold if and only if it is a real toric manifold admitting a flat riemannian metric invariant under the action.

It is well known that real line bundles are classified by their first Stiefel-Whitney classes.
With the binary field $\ZZ=\{0,1\}$,  $H^1(M_{j-1};\ZZ)$ is isomorphic to $(\ZZ)^{j-1}$ through a canonical basis. Therefore the line bundle $L_{j-1}$ is determined by a vector $A_j$ in $(\ZZ)^{j-1}$.
By regarding $A_j$ as a column vector in $(\ZZ)^n$ by adding zeros, we obtain a strictly upper triangular  $n\times n$ matrix $A$ whose $j$-th column vector is $A_j$.
Since the real Bott manifold $M_n$ is determined by the matrix $A$, we may denote it by $M(A)$.
We shall give an alternative construction of $M(A)$ in Section~\ref{sect:rbott}, which also make sense for \emph{Bott matrices} $A$. A square matrix $A$ is a \emph{Bott matrix} if $A=PBP^{-1}$ for a permutation matrix $P$ and an strictly upper triangular binary matrix $B$. Let $\T(n)$ be the set of all $n\times n$ Bott matrices.

%In fact, one can describe $M(A)$ as the quotient of the $n$-dimensional torus $T^n$ by a smooth free action of an elementary abelian 2-group of rank $n$, where the action is defined by $A$, and $A$ is not necessarily a strictly upper triangular binary matrix but may also be a binary matrix conjugate by a permutation matrix to a strictly upper triangular binary matrix, see Section~\ref{sect:rbott}.
%We call such a matrix a \emph{Bott matrix} and denote by $\T(n)$ the set of Bott matrices of size $n$.

Two different Bott matrices $A$ and $B$ may produce (affinely) diffeomorphic real Bott manifolds.
How can we determine whether $M(A)$ and $M(B)$ are affinely diffeomorhic?
By analyzing graded ring isomorphisms between cohomology rings of real Bott manifolds, we found three operations $\OP$, $\OC$, and $\OR$ on Bott matrices whose compositions induce all graded ring isomorphisms. Our first main theorem shows that these operations actually characterize affinely diffeomorphic real Bott manifolds, answering the above question. We say that two Bott matrices  are \emph{Bott equivalent} if one is transformed to the other through a sequence of these three operations.
%
%The cohomology ring of $M(A)$ can be described explicitly in terms of $A$, and three operations on $\T(n)$, denoted by $\OP$, $\OC$ and $\OR$, naturally arise when we analyze isomorphism classes of cohomology rings of real Bott manifolds.
%The operation $\OP$ is a conjugation by a permutation matrix, $\OC$ is a variant of simultaneous addition of a column vector to other column vectors and $\OR$ is addition of a row vector to another row vector under some condition; see Section~\ref{sect:matrix}.
%Our first main result is the following.

\begin{theo} \label{main}
    The following are equivalent for Bott matrices $A,B$ in $\T(n)$.
    \begin{enumerate}[(1)]
    \item $A$ and $B$ are Bott equivalent.
    \item $M(A)$ and $M(B)$ are affinely diffeomorphic.
    \item $H^*(M(A);\ZZ)$ and $H^*(M(B);\ZZ)$ are isomorphic as graded rings.
    \end{enumerate}
    Moreover, every graded ring isomorphism from $H^*(M(A);\ZZ)$ to $H^*(M(B);\ZZ)$ is induced by an affine diffeomorphism from $M(B)$ to $M(A)$.
\end{theo}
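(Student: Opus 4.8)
The plan is to prove the cycle of implications $(1)\Rightarrow(2)\Rightarrow(3)\Rightarrow(1)$ and then to read the final assertion off the proof of $(3)\Rightarrow(1)$. Throughout I will use the explicit presentation $H^*(M(A);\ZZ)=\ZZ[x_1,\dots,x_n]/(x_j^2+x_j\alpha_j^A\mid 1\le j\le n)$, where $\alpha_j^A=\sum_i A_{ij}x_i$ is the linear form recorded by the $j$-th column of $A$. Two features of this ring will be used repeatedly: it is generated in degree $1$ (the images of $x_1,\dots,x_n$ forming a basis of $H^1$), and the squaring map $u\mapsto u^2$ on $H^1$ is $\ZZ$-linear.

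For $(1)\Rightarrow(2)$ it suffices to produce, for each of the operations $\OP$, $\OC$, $\OR$ carrying a Bott matrix $A$ to a Bott matrix $B$, an explicit affine diffeomorphism between $M(A)$ and $M(B)$. Realizing $M(A)$ as the quotient of $T^n$ by the free $(\ZZ)^n$-action determined by $A$: the operation $\OP$ (conjugation by a permutation matrix) is realized by permuting the circle factors of $T^n$; $\OC$ (the column operation) by a linear automorphism of $T^n$; and $\OR$ (the row operation, subject to its side condition) by a linear automorphism of $T^n$ composed with a translation by a half-period in one circle coordinate, the side condition being exactly what makes this map equivariant for the two $(\ZZ)^n$-actions. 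In each case one checks that the map descends to the quotients and is affine. The implication $(2)\Rightarrow(3)$ is immediate, a diffeomorphism inducing a graded ring isomorphism on $\ZZ$-cohomology.

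The substance is $(3)\Rightarrow(1)$. A graded ring isomorphism $\phi\colon H^*(M(A);\ZZ)\to H^*(M(B);\ZZ)$ is determined by its restriction $P\colon H^1(M(A))\to H^1(M(B))$ (by degree-$1$ generation), and conversely a linear isomorphism $P$ induces a ring isomorphism exactly when it carries the defining relations of $H^*(M(A))$ into the relation ideal of $H^*(M(B))$ --- a system of quadratic conditions linking $A$, $B$ and $P$. I would first apply $\OP$ to normalize $A$ and $B$ to be strictly upper triangular, then extract from the ring structure a rigid combinatorial skeleton: the squaring map together with the degree-$1$ multiplication pairing recovers the source-sink data of the acyclic digraph of $A$ (for instance, the kernel of the squaring map is spanned by the generators indexed by the sources). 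Using this I would induct on $n$ --- peel off a source or a sink, use $\OC$ and $\OR$ to bring $P$ into a standard form adapted to the induced isomorphism on the lower-dimensional manifolds, and conclude that $B$ is obtained from $A$ by a sequence of the three operations, while recording at each step that $P$ is the corresponding product of the linear maps those operations induce on $H^1$.

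I expect the inductive step of $(3)\Rightarrow(1)$ to be the main obstacle: one must determine precisely which linear substitutions $P$ are admissible and show that every admissible one factors through $\OP$, $\OC$, $\OR$, so that no \emph{exotic} ring isomorphism escapes the list; the bookkeeping around the side condition on $\OR$, where admissibility is most delicate, will carry the bulk of the casework. Granting this, the final assertion is immediate: given a graded ring isomorphism $\phi$, the proof of $(3)\Rightarrow(1)$ produces a sequence of operations transforming $A$ into $B$ whose composite affine diffeomorphism $g\colon M(B)\to M(A)$ satisfies $g^*|_{H^1}=P=\phi|_{H^1}$, and since $H^*(M(A);\ZZ)$ is generated in degree $1$ and $g^*,\phi$ are ring homomorphisms agreeing there, $g^*=\phi$.
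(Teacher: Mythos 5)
Your overall architecture is the right one --- the cycle $(1)\Rightarrow(2)\Rightarrow(3)\Rightarrow(1)$, with the ``moreover'' clause read off from a constructive proof of $(3)\Rightarrow(1)$ together with degree-one generation --- and your treatment of $(1)\Rightarrow(2)$ matches the paper's Proposition~\ref{proposition:5.1} in substance (one small point: in the paper it is $\OC$ that is realized by a translation, namely $z_k\mapsto\sqrt{-1}\,z_k$, while $\OR$ is the linear automorphism $z_\ell\mapsto z_\ell z_m$; you have these roles reversed, and it is not clear that a purely linear map can intertwine the two group actions for $\OC$).

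The genuine gap is in $(3)\Rightarrow(1)$. You correctly identify that the whole problem is to determine which linear substitutions $P$ on $H^1$ extend to ring isomorphisms and to show each such $P$ factors through the three operations --- but you then defer exactly this step (``I expect the inductive step \dots\ to be the main obstacle''), so the heart of the theorem is planned rather than proved. The paper's resolution is not an induction on $n$ at all: it introduces \emph{eigen-elements} $\alpha$ (classes admitting $x\neq 0,\alpha$ with $x^2=\alpha x$) and their eigen-spaces, and Lemma~\ref{EAa} shows the eigen-elements are precisely the column forms $\alpha_j$ and that $\EA(\alpha)$ is spanned by $\alpha$ together with the $x_i$ having $\alpha_i=\alpha$. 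This single lemma is what constrains $\varphi(x_j)$: since $\varphi$ must carry eigen-elements to eigen-elements and eigen-spaces to eigen-spaces, one first normalizes $\varphi(\alpha_j)=\beta_j$ by $\OP$, then observes that on each reduced eigen-space $\varphi$ is given by an invertible matrix $F_J$ whose column operations are exactly realized by $\OR$ and $\OP$ (via \eqref{op3coho} and \eqref{op1coho}), reducing to $\varphi(x_j)=y_j$ or $y_j+\beta_j$, and $\OC$ kills the residual ambiguity. Your observation that the kernel of the squaring map recovers the sources is the $\alpha=0$ special case of this lemma, but without the full statement for nonzero $\alpha$ your ``peel off a source'' induction has no control over where $\varphi$ sends the generators attached to interior vertices, and the casework you anticipate around the side condition of $\OR$ is precisely what Lemma~\ref{EAa} dissolves. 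As written, the proposal does not constitute a proof of $(3)\Rightarrow(1)$ or of the final assertion.
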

In particular, we obtain the following main theorem of Kamishima and Masuda~\cite{ka-ma09} as a corollary.

\begin{coro}\label{maincoro}
    Two real Bott manifolds are diffeomorphic if and only if their cohomology rings with $\ZZ$ coefficients are isomorphic as graded rings.
\end{coro}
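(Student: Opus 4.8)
The plan is to obtain this as a direct consequence of Theorem~\ref{main}; the only extra ingredients are that the cohomology ring is a diffeomorphism invariant and that an affine diffeomorphism is in particular a diffeomorphism, neither of which requires argument.

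First I would observe that every real Bott manifold is of the form $M(A)$ for some Bott matrix $A$. Indeed, a real Bott manifold $M$ comes equipped with a real Bott tower \eqref{tower} of height $n$, and by the construction recalled in Section~\ref{sect:intro} the line bundles $L_{j-1}$ determine a strictly upper triangular binary matrix $A$ of size $n$ with $M=M(A)$; a strictly upper triangular binary matrix is in particular a Bott matrix, so $A\in\T(n)$. Given two real Bott manifolds, write them as $M(A)$ and $M(B)$. Since a real Bott manifold of height $n$ is a closed connected $n$-manifold (it is an iterated $\R P^1$-bundle), its dimension is detected by the top nonvanishing degree of $H^*(-;\ZZ)$; hence if the two heights differ then both sides of the asserted equivalence fail trivially, and we may assume $A,B\in\T(n)$ for a common $n$.

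It then remains to prove the equivalence for $M(A)$ and $M(B)$ with $A,B\in\T(n)$. If $M(A)$ and $M(B)$ are diffeomorphic, they are homotopy equivalent, so $H^*(M(A);\ZZ)$ and $H^*(M(B);\ZZ)$ are isomorphic as graded rings, which is exactly condition~(3) of Theorem~\ref{main}. Conversely, if $H^*(M(A);\ZZ)$ and $H^*(M(B);\ZZ)$ are isomorphic as graded rings, then by the implication $(3)\Rightarrow(2)$ of Theorem~\ref{main} the manifolds $M(A)$ and $M(B)$ are affinely diffeomorphic, hence diffeomorphic. Combining the two directions yields the corollary. There is no genuine obstacle here: the content lies entirely in Theorem~\ref{main}, and the only point to isolate carefully is the routine reduction to the case that both manifolds are presented by Bott matrices of the same size, so that the theorem applies.
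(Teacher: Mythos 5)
Your proposal is correct and is exactly the deduction the paper intends: the corollary is stated as an immediate consequence of Theorem~\ref{main} (the paper gives no separate proof beyond ``In particular''), and your route via the equivalence of (2) and (3), together with the routine observations that every real Bott manifold is some $M(A)$ and that dimension is detected cohomologically, is the same argument.
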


To a Bott matrix $A$, one can associate an \emph{acyclic digraph} (a directed graph with no directed cycles) whose adjacency matrix is $A$.
This correspondence is a bijection from $\T(n)$ to the set of acyclic digraphs on vertices $\{1,2,\ldots,n\}$.
Through the bijection, the three operations $\OP$, $\OC$ and $\OR$ on $\T(n)$ can be described as operations on acyclic digraphs.
$\OP$ corresponds to permuting labels of vertices.
To our surprise, $\OC$ corresponds to a known operation in graph theory called a \emph{local complementation} while the operation corresponding to $\OR$ seems not studied and we call it a \emph{slide}.
As far as we know, a local complementation on digraphs was first introduced by Bouchet~\cite{Bouchet1987c}.
Fon-Der-Flaass~\cite{fon94} surveyed this operation.
This operation also appears in the coding theory \cite{da-pa09} and quantum information theory \cite{va-de-de04}.
Our result adds another application of this operation in topology.

We prove that real Bott manifolds of dimension $n$ up to diffeomorphism can be identified with non-isomorphic acyclic digraphs on $n$ vertices up to local complementation and slide.
This combinatorial interpretation enables us to efficiently count the number $\Diff{n}$ of real Bott manifolds of dimension $n$ up to diffeomorphism.
We list $\Diff{n}$ in Table~\ref{tab:Bott equivalence} for $n \leq 8$.
Previously, $\Diff{n}$ was known for $n\leq 5$ and it was a hard task to find $\Diff{5}$ using a geometrical method (\cite{nazr10}).
The computation of $\Diff{8}$ takes less than $10$ minutes by a regular desktop computer if we use the list of non-isomorphic acyclic digraphs provided by B. D. McKay.\footnote{
\url{http://cs.anu.edu.au/~bdm/data/digraphs.html}}
In addition to $\Diff{n}$, we also list the number $\Ori{n}$ and  $\Sym{n}$ of $n$-dimensional orientable and symplectic, respectively, real Bott manifolds in Table~\ref{tab:Bott equivalence} for small values of $n$.
\begin{table}
    \centering
    \begin{tabular}{c|c c c c c c c c c c }
        \hline
        $n$ & 1 & 2 & 3 & 4  & 5 & 6 & 7 & 8&9&10\\ \hline
        $\Diff{n} $ & 1 & 2 & 4 & 12 & 54 & 472 & 8,512 & 328,416&?&?\\ \hline
        $\Ori{n}$ & 1 & 1 & 2 & 3 & 8 & 29 & 222 & 3,607&131,373&?\\ \hline
        $\Sym{n} $ & 0  & 1 & 0  & 2 &  0 & 6  &  0   &  31 &0&416\\
        \hline
    \end{tabular}
    \caption{The numbers $\Diff{n},~\Ori{n},~\Sym{n}$ of $n$-dimensional real Bott manifolds, orientable real Bott manifolds and symplectic real Bott manifolds up to diffeomorphism, respectively.}
    \label{tab:Bott equivalence}
\end{table}

Our classification of real Bott manifolds helps us to prove the topologically unique decomposition property for real Bott manifolds as follows.
We say that a real Bott manifold is \emph{indecomposable} if it is not diffeomorphic to a product of more than one real Bott manifolds.

\begin{theo}[Unique Decomposition Property] \label{main1}
    The decomposition of a real Bott manifold into a product of indecomposable real Bott manifolds is unique up to permutations of the indecomposable factors.
\end{theo}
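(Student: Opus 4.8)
The plan is to push the statement through Theorem~\ref{main} into the combinatorial model and settle it there. By Theorem~\ref{main}, a real Bott manifold is determined up to affine diffeomorphism by the Bott-equivalence class of its Bott matrix, and under the bijection between $\T(n)$ and acyclic digraphs on $\{1,\dots,n\}$ this becomes the equivalence relation $\sim$ generated by relabelling $\OP$, local complementation $\OC$ and slide $\OR$; write $\G(A)$ for the acyclic digraph associated to $A$. A block-diagonal Bott matrix $A_1\oplus\dots\oplus A_k$ corresponds to the disjoint union $\G(A_1)\sqcup\dots\sqcup\G(A_k)$ and represents the product $M(A_1)\times\dots\times M(A_k)$. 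Existence of a decomposition into indecomposable factors is clear by induction on dimension, so only uniqueness is at issue, and it will follow at once from the following claim.

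\medskip
\noindent\textbf{Claim.} A single move $\G\to\G'$ (of type $\OP$, $\OC$ or $\OR$) acts on exactly one connected component of $\G$, replacing it by an $\sim$-equivalent digraph, and fixes every other component; consequently, whenever $\G\sim\G'$ there is a bijection between the connected components of $\G$ and those of $\G'$ carrying each component of $\G$ to an $\sim$-equivalent component of $\G'$.
\medskip

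First I would derive the theorem from the Claim. Suppose $\G(A)$ is connected and $M(A)\cong N_1\times\dots\times N_r$ with $r\ge 2$ positive-dimensional real Bott manifolds $N_s=M(B_s)$. Then $A$ is Bott equivalent to $B_1\oplus\dots\oplus B_r$, whose digraph is disconnected, so by the Claim $\G(A)$ would be disconnected --- a contradiction. Hence connectivity of $\G(A)$ forces $M(A)$ to be indecomposable, while a disconnected $\G(A)$ plainly makes $M(A)$ a nontrivial product; so $M(A)$ is indecomposable if and only if $\G(A)$ is connected. Now given $M=M(A_1)\times\dots\times M(A_k)=M(A_1')\times\dots\times M(A_l')$ with all factors indecomposable, the digraphs $\G(A_1)\sqcup\dots\sqcup\G(A_k)$ and $\G(A_1')\sqcup\dots\sqcup\G(A_l')$ are $\sim$-equivalent and each summand is connected, so the summands are exactly the connected components; the Claim yields $k=l$ and a permutation $\pi$ with $\G(A_i)\sim\G(A_{\pi(i)}')$, whence $M(A_i)$ is affinely diffeomorphic to $M(A_{\pi(i)}')$ by Theorem~\ref{main}.

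It then remains to prove the Claim one move at a time, the case of a chain of moves following by composing the resulting bijections. For $\OP$ it is trivial. For $\OC$ performed at a vertex $v$: being a column operation, the only arcs it creates or destroys join an in-neighbour of $v$ to an out-neighbour of $v$, hence join two vertices already in the component of $v$; a destroyed arc $x\to y$ is replaced by the directed path $x\to v\to y$, so that component stays connected and all others are left untouched, and the restriction of $\OC$ to that component is again an $\OC$. For $\OR$, which slides the out-neighbourhood of one vertex onto that of another, one must read off from the side condition that licenses $\OR$ that the two vertices involved --- and hence all the moved arcs --- lie in a single component, after which the same ``reroute through a length-two path'' argument closes the case. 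Pinning down this last point is the only real obstacle: unlike $\OC$, the slide is a row operation and could a priori splice arcs across distinct components, so the whole argument hinges on checking from the definition of $\OR$ that it cannot; everything else is formal.
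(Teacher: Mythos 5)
The central Claim is false, and the paper itself displays the counterexample (Figure~\ref{Fig:dec}). The side condition licensing a slide $D\slides{u}{v}$ is $N_D^-(u)=N_D^-(v)$, and this holds \emph{vacuously} whenever $u$ and $v$ are both roots (vertices with no in-neighbour), even when they lie in different connected components. Concretely, let $D_1$ have vertex set $\{1,2,3\}$ with the single arc $(1,2)$ and with $3$ isolated; then $N_{D_1}^-(1)=N_{D_1}^-(3)=\emptyset$, and $D_1\slides{1}{3}$ adds the arc $(3,2)$, producing a connected digraph. A single slide can therefore merge (and, read backwards, split) components, so the componentwise bijection you assert does not exist for this move. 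The point you flag as ``the only real obstacle'' is exactly where the argument breaks: a slide on two roots in distinct components \emph{does} splice arcs across components. This also invalidates your intermediate assertion that $M(A)$ is indecomposable iff $D_A$ is connected; the paper must instead \emph{define} an acyclic digraph to be indecomposable when every Bott-equivalent digraph is connected, precisely because connectivity is not preserved.

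The genuine content of the theorem is concentrated in controlling slides on roots, and this is what the paper's Section~\ref{sect:decom} does. Lemma~\ref{lemma:simplify} commutes all root-slides to the front of any sequence of moves, so that the remaining moves (local complementations and slides at non-roots, for which your rerouting argument is fine) really do preserve components. Lemma~\ref{lemm:D=H} then characterizes reachability under root-slides by equality of the row spaces $\row\cormatrix{L_0(D),V(D)\setminus L_0(D)}{D}$, Lemma~\ref{full rank} shows that an indecomposable digraph has this matrix of full row rank, and Lemma~\ref{lemm:indecom} uses linear algebra on these row spaces to locate, inside the target digraph, a connected copy of each indecomposable summand with a possibly different set of roots attached; a counting argument on isolated vertices and a two-sided containment argument then produce the permutation. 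None of this is ``formal,'' and your proof cannot be completed without an argument of comparable substance for the root-slide case.
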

In particular, since $S^1$ is a real Bott manifold $\R P^1$, we have the following corollary.
\begin{coro}[Cancellation Property]  \label{main1coro}
    Let $M$ and $M'$ be real Bott manifolds. If $S^1\times M$ and $S^1\times M'$ are diffeomorphic, then $M$ and $M'$ are diffeomorphic.
\end{coro}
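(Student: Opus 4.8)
The plan is to deduce this formally from Theorem~\ref{main1}, using only the observation that the circle is an indecomposable real Bott manifold. First I would record two elementary facts. The circle $S^1=\R P^1=M(0)$ is a real Bott manifold of dimension $1$, and since every nontrivial product of real Bott manifolds has dimension at least $2$, the circle is indecomposable. Secondly, a finite product of real Bott manifolds is again a real Bott manifold: if $A\in\T(p)$ and $B\in\T(q)$, then the block-diagonal matrix $A\oplus B$ lies in $\T(p+q)$ and $M(A\oplus B)=M(A)\times M(B)$. Consequently products may be formed freely and, by induction on dimension, every real Bott manifold admits a decomposition into indecomposable real Bott manifolds.

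Next, given real Bott manifolds $M$ and $M'$, I would fix such decompositions $M\cong N_1\times\dots\times N_k$ and $M'\cong N'_1\times\dots\times N'_\ell$ into indecomposable real Bott manifolds. Then
$$S^1\times M\;\cong\; S^1\times N_1\times\dots\times N_k,\qquad S^1\times M'\;\cong\; S^1\times N'_1\times\dots\times N'_\ell$$
are decompositions of $S^1\times M$ and $S^1\times M'$ into indecomposable factors. Assuming $S^1\times M$ and $S^1\times M'$ are diffeomorphic, Theorem~\ref{main1} supplies a bijection between the two lists of indecomposable factors under which corresponding factors are diffeomorphic; in particular $k=\ell$. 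Using this matching to cancel one common $S^1$-factor from each list yields a bijection between $\{N_1,\dots,N_k\}$ and $\{N'_1,\dots,N'_\ell\}$ matching diffeomorphic factors, and reassembling the products gives $M\cong M'$.

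There is essentially no serious obstacle; the single point that must be handled with care is the cancellation of the $S^1$-factor, since Theorem~\ref{main1} only asserts uniqueness of the factors \emph{up to diffeomorphism}. One must therefore argue at the level of multisets of diffeomorphism classes: writing $[X]$ for the diffeomorphism class of $X$, the equality of multisets $\{[S^1]\}\cup\{[N_i]\}=\{[S^1]\}\cup\{[N'_j]\}$ permits multiset subtraction of $[S^1]$ from both sides to give $\{[N_i]\}=\{[N'_j]\}$, and this remains valid even when some $N_i$ or $N'_j$ is itself diffeomorphic to $S^1$. Hence $M\cong\prod_i N_i\cong\prod_j N'_j\cong M'$, which completes the argument.
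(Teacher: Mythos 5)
Your argument is correct and is exactly the intended deduction: the paper derives the corollary from Theorem~\ref{main1} by noting that $S^1=\R P^1$ is an (indecomposable) real Bott manifold and cancelling it from the two unique decompositions. Your extra care about multiset subtraction of the class $[S^1]$ is the right way to make the one-line deduction rigorous.
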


As remarked before, a real Bott manifold admits a flat riemannian metric.
We note that the cancellation property above fails to hold for general compact flat riemannian manifolds \cite{char65-1}.
It would be interesting to ask whether Theorem~\ref{main1} and Corollary~\ref{main1coro} hold for any (real) toric manifolds.

Our combinatorial interpretation allows us to discover several numerical invariants of real Bott manifolds up to diffeomorphism.
Interestingly, those invariants can be thought of as a refinement of the topological and geometrical properties of real Bott manifolds.
We will discuss them in Section~\ref{sec:invariant}.
In particular, we prove the following theorem, which confirms that the toral rank conjecture \cite[p.280]{al-pu93} holds for real Bott manifolds.
\begin{theo} \label{thm:trc}
Let $A\in\T(n)$.
If $M(A)$ admits an effective topological action of a torus $T^k$ of dimension $k$, then
$$\sum_{i=0}^n \dim_{\Q} H^i(M(A);\Q)\ge 2^k.$$
\end{theo}

This paper is organized as follows.
In Section~\ref{sect:rbott} we describe $M(A)$ and its cohomology ring
explicitly in terms of a Bott matrix $A$.
We introduce three operations on $\T(n)$ in Section~\ref{sect:matrix} and translate them into operations on acyclic digraphs in Section~\ref{section:acyclic digraphs}.
To each operation we associate an affine diffeomorphism between real Bott manifolds in Section~\ref{sect:affine}, which implies the implication (1) $\Rightarrow$ (2) in Theorem~\ref{main}.
The implication (2) $\Rightarrow$ (3) is trivial.
In Section~\ref{sect:cohom} we prove the last part of Theorem~\ref{main}.
The argument also establishes the implication (3) $\Rightarrow$ (1).
We prove Theorem~\ref{main1} in Section~\ref{sect:decom}.
In Section~\ref{sec:invariant} we produce numerical invariants of real Bott manifolds from the viewpoint of graph theory. In particular, in Section~\ref{subsection:rank}, we prove Theorem~\ref{thm:trc}.

\begin{note}
    This paper is a combination of preprints \cite{Ch-Ou-2010} and \cite{masu08}.
    After the second author wrote the paper \cite{masu08}, the first and third authors wrote the paper \cite{Ch-Ou-2010} which relates results in \cite{masu08} to acyclic digraphs based on the observation in \cite{Choi-2008}, simplifies the operation $\OR$ in \cite{masu08} and produces many numerical invariants of real Bott manifolds.
    In this paper, we also re-prove Theorem~\ref{main1} from a graph theoretical viewpoint.
    Readers may find an algebraic proof of Theorem~\ref{main1} in \cite{masu08}.
    These two proofs are completely different.
    Some parts of this paper including Theorem~\ref{thm:trc} are neither in \cite{masu08} nor in \cite{Ch-Ou-2010}.
    We hope that this combination will make the subject and results more appealing to the readers.
\end{note}

\section{Real Bott manifolds and their cohomology rings} \label{sect:rbott}
The real Bott manifold $M(A)$ associated with a strictly upper triangular $n\times n$ binary matrix $A$ can be described as the quotient of the $n$-dimensional torus by a free action of an elementary abelian $2$-group of rank $n$.
The free action is uniquely determined by the matrix $A$.
In addition, this quotient construction also works if $A$ is conjugate by a permutation matrix to a strictly upper triangular binary matrix.
Motivated by this, we make the following definition.

\begin{defi}
    A square matrix $A$ is a \emph{Bott matrix} if
    $$
        A=PBP^{-1}
    $$
    for a permutation matrix $P$ and a strictly upper triangular binary matrix $B$.
    In other words, a Bott matrix is conjugate by a permutation matrix to a strictly upper triangular binary matrix.
    We denote by $\T(n)$ the set of all $n\times n$ Bott matrices.\footnote{In \cite{masu08} $\T(n)$ is defined to be the set of strictly upper triangular $n\times n$ binary matrices.}
\end{defi}

Masuda and Panov~\cite[Lemma 3.3]{ma-pa08} showed that a binary square matrix  $A$ is a Bott matrix if and only if every principal minor of $A+I$ is $1$ over $\ZZ=\{0,1\}$, where $I$ is the identity matrix.

Let us recall the quotient construction and the structure of the cohomology ring of $M(A)$ for $A\in \T(n)$.
Let $S^1$ denote the unit circle consisting of complex numbers with absolute value $1$.
For $z\in S^1$ and $a\in \ZZ$, we use the following notation
$$
    z(a):=\begin{cases} z \quad&\text{if $a=0$,}\\
    \bar z\quad&\text{if $a=1$}.
    \end{cases}
$$
For a matrix $A$, let  $A^i_j$ be the $(i,j)$ entry of $A$ and let $A^i$, $A_j$ be the $i$-th row vector, the $j$-th column vector, respectively, of $A$.
We define the involutions $a_1,a_2,\ldots,a_n$ on $T^n:=(S^1)^n$ by
\begin{equation} \label{ai}
    a_i (z_1,\dots,z_n):=(z_1(A^i_1),\dots,z_{i-1}(A^i_{i-1}),-z_i,z_{i+1}(A^i_{i+1}),\dots, z_n(A^i_n)).
\end{equation}
These involutions $a_1,a_2,\ldots,a_n$ commute with each other and generate an elementary abelian $2$-group of rank $n$, denoted by $G(A)$.

\begin{lemm} \label{free}
    The action of $G(A)$ on $T^n$ is free.
\end{lemm}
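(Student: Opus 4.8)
The plan is to reduce immediately to the case in which $A$ is strictly upper triangular, and then to show that every non-identity element of $G(A)$ acts on some coordinate of $T^n$ by $z\mapsto -z$, so that it cannot have a fixed point.

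For the reduction, write $A=PBP^{-1}$ with $B$ strictly upper triangular and $P$ the permutation matrix of some $\sigma\in S_n$, so that $A^{\sigma(i)}_{\sigma(j)}=B^i_j$ for all $i,j$. Let $\Phi\colon T^n\to T^n$ be the homeomorphism $\Phi(z_1,\dots,z_n):=(z_{\sigma^{-1}(1)},\dots,z_{\sigma^{-1}(n)})$, and let $b_1,\dots,b_n$ be the involutions associated with $B$ via \eqref{ai}. A direct comparison using \eqref{ai} shows $a_{\sigma(i)}=\Phi\circ b_i\circ\Phi^{-1}$ for every $i$, hence $G(A)=\Phi\,G(B)\,\Phi^{-1}$. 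Since freeness of a group action is invariant under conjugation by a homeomorphism, it suffices to prove the lemma when $A$ is strictly upper triangular.

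So assume $A$ is strictly upper triangular. Then $A^i_j=0$ whenever $j\le i$, so by \eqref{ai} the involution $a_i$ fixes each of the coordinates $z_1,\dots,z_{i-1}$ and sends $z_i$ to $-z_i$. Since $a_1,\dots,a_n$ commute and each has order $2$, every $g\in G(A)$ can be written as $g=\prod_{i\in S}a_i$ for some $S\subseteq\{1,\dots,n\}$, and $S\ne\emptyset$ whenever $g\ne e$. Put $i_0:=\min S$ and, using commutativity, write $g=\bigl(\prod_{i\in S\setminus\{i_0\}}a_i\bigr)\circ a_{i_0}$. For any $z\in T^n$, the $i_0$-th coordinate of $a_{i_0}(z)$ is $-z_{i_0}$, while every remaining factor $a_i$ has $i>i_0$ and therefore leaves the $i_0$-th coordinate untouched; hence the $i_0$-th coordinate of $g(z)$ is $-z_{i_0}\ne z_{i_0}$. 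Thus no non-identity element of $G(A)$ has a fixed point, i.e.\ the action is free.

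There is no serious obstacle in this argument; the only step that needs a little care is the bookkeeping in the reduction, namely checking that the coordinate permutation $\Phi$ induced by $P$ conjugates the involutions $b_i$ of $B$ exactly to the involutions $a_{\sigma(i)}$ of $A$ — a short but slightly tedious verification against \eqref{ai}. As a by-product, the last paragraph also shows that a product of any non-empty sub-collection of the $a_i$ moves the coordinate indexed by the smallest element of that collection, so the $a_i$ satisfy no relation and $G(A)$ has rank exactly $n$, as claimed just before the lemma.
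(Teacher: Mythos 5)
Your proof is correct and follows essentially the same route as the paper: reduce to the strictly upper triangular case via the coordinate permutation induced by $P$ (the paper phrases this as relabelling coordinates so that $a_i$ becomes $b_{\sigma(i)}$, which is exactly your conjugation by $\Phi$), and then observe that a non-identity product $\prod_{i\in S}a_i$ negates the coordinate indexed by $\min S$ because the factors with larger index fix it. The only difference is cosmetic: you spell out the conjugation identity $a_{\sigma(i)}=\Phi\circ b_i\circ\Phi^{-1}$ explicitly, which the paper leaves as a verification from \eqref{SA=BA1}.
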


\begin{proof}
    Let $A\in\T(n)$.
    If $A$ is strictly upper triangular, then $z_1(A^i_1)=z_1,\dots,z_{i-1}(A^i_{i-1})=z_{i-1}$ in \eqref{ai} because $A^i_1=\dots=A^i_{i-1}=0$.
    Therefore, for an element $t = a_{i_1} \cdots a_{i_\ell}$ of $G(A)$ with $i_1 < \cdots <i_\ell$, the $i_1$-th component of $t (z_1, \ldots, z_n)$ is $-z_{i_1}$.
    Hence, the action of $G(A)$ on $T^n$ is clearly free when $A$ is strictly upper triangular.

    Now let us assume that $A$ is not strictly upper triangular.
    There is a permutation $\sigma$ on $\{1,2,\ldots,n\}$ with its permutation matrix $P$ such that $B=PAP^{-1}$ is strictly upper triangular, where
$$
    P^i_j = \left\{
              \begin{array}{ll}
                1 & \hbox{if $i= \sigma(j)$,} \\
                0 & \hbox{otherwise.}
              \end{array}
            \right.
$$
    Since $PA=BP$, we have
\begin{equation} \label{SA=BA1}
    A^i_j=(PA)^{\sigma(i)}_j=(BP)^{\sigma(i)}_j=B^{\sigma(i)}_{\sigma(j)}.
\end{equation}
    This together with \eqref{ai} means that if we change the suffix of the coordinate $(z_1,\dots,z_n)$ by $\sigma$, then the involution $a_i$ in \eqref{ai} is the same as the involution $b_{\sigma(i)}$ associated with $B$ for each $i$.
    Since the action of $G(B)$ on $T^n$ is free as $B$ is strictly upper triangular, so is the action of $G(A)$ on $T^n$.
\end{proof}

We define $M(A)$ to be the orbit space $T^n/G(A)$.
By Lemma~\ref{free}, $M(A)$ is a closed smooth manifold.

\begin{rema}
We remark that $M(A)$ is a flat riemannian manifold.
In fact, the Euclidean motions $s_1,s_2,\ldots,s_n$ on $\R^n$ defined by
$$
    s_i(u_1,\dots,u_n):=((-1)^{A^i_1}u_1,\dots,(-1)^{A^i_{i-1}}u_{i-1}, u_i+\frac{1}{2}, (-1)^{A_{i+1}^i}u_{i+1},\dots, (-1)^{A_{n}^i}u_n)
$$
generate a crystallographic group $\G(A)$, where the subgroup generated by $s_1^2,\dots,s_n^2$ consists of all translations by $\Z^n$, and $\G(A)/\Z^n=G(A)$.
The action of $\G(A)$ on $\R^n$ is free.
Through the identification $\R/\Z$ with $S^1$ via an exponential map \[u\mapsto \exp(2\pi\sqrt{-1}u),\] the orbit space $\R^n/\Z^n$ agrees with $T^n$ and the orbit space $\R^n/\G(A)$ agrees with $M(A)=T^n/G(A)$.
\end{rema}

For $k=1,2,\ldots,n$, let $G_k$ be the subgroup of $G(A)$ generated by $a_1,\dots,a_k$.
Obviously $G_n=G(A)$.
Let $T^k:=(S^1)^k$ be the product of first $k$-factors in $T^n=(S^1)^n$.
Then $G_k$ acts on $T^k$ by restricting the action of $G_k$ on $T^n$ to $T^k$ and the orbit space $T^k/G_k$ is a manifold of dimension $k$.
If $A$ is strictly upper triangular, then the natural projections $T^k\to T^{k-1}$ for $k=1,2,\ldots,n$ produce a real Bott tower
$$
    M(A)=T^n/G_n\to T^{n-1}/G_{n-1} \to \dots\to T^1/G_1\to \text{\{a point\}},
$$
which agrees with \eqref{tower} in Section~\ref{sect:intro} (see \cite{ka-ma09}).

The graded ring structure of $H^*(M(A);\ZZ)$ can be described explicitly in terms of the matrix $A$.
We shall recall it.
For a homomorphism $\lambda\colon G(A)\to \{\pm 1\}$ we denote by $\R(\lambda)$ the real one-dimensional $G(A)$-module associated with $\lambda$.
Then the orbit space of $T^n\times \R(\lambda)$ by the diagonal action of $G(A)$, denoted by $L(\lambda)$, defines a real line bundle over $M(A)$ with the first projection.
For $j\in \{1,2,\ldots,n\}$, let $\lambda_j\colon G(A)\to \{\pm 1\}$ be a homomorphism such that
$$
    \lambda_j(a_i)=
    \begin{cases}
      -1 &\text{if }i=j,\\
      1 & \text{otherwise.}
    \end{cases}
$$
We set
$$
    x_j=w_1(L(\lambda_j))
$$
where $w_1$ denotes the first Stiefel-Whitney class.

\begin{lemm} \label{cohoA}
    Let  $A$ be a Bott matrix in $\T(n)$.
    Then
$$
    H^*(M(A);\ZZ)=\ZZ[x_1,\dots,x_n]/(x_j^2=x_j\sum_{i=1}^nA^i_jx_i\mid j=1,\ldots,n)
$$
    as graded rings.
    Moreover,
    \begin{enumerate}[(i)]
        \item $M(A)$ is orientable if and only if $\sum_{j=1}^nA^i_j=0$ in $\ZZ$ for every $i\in \{1,2,\ldots,n\}$,
        \item $M(A)$ admits a symplectic form if and only if $\lvert\{ k\mid A_k=A_j\}\rvert$ is even for every $j\in\{1,2,\ldots,n\}$.
    \end{enumerate}
\end{lemm}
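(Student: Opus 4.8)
The plan is to derive the ring presentation from the Bott tower and to read off (i) and (ii) from the flat structure $M(A)=\R^n/\Gamma(A)$. For the ring, first suppose $A$ is strictly upper triangular, so $M(A)$ carries the tower $M(A)=M_n\to\dots\to M_0$ with $M_j=\RP(L_{j-1}\oplus\underline{\R})$, and identify each $x_j=w_1(L(\lambda_j))$ with the pullback to $M(A)$ of the first Stiefel--Whitney class of the tautological line bundle of $M_j$ (the standard correspondence between the characters $\lambda_j$ and the line bundles of the tower, cf.\ \cite{ka-ma09}); the bundle $L_{j-1}$ is classified by the $j$-th column of $A$, so $w_1(L_{j-1})=\sum_{i=1}^n A^i_j x_i$. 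Feeding this into the mod~$2$ projective bundle formula $H^*(\RP(E\oplus\underline{\R});\ZZ)\cong H^*(B;\ZZ)[x]/(x^2+w_1(E)x)$ for a line bundle $E$ over $B$ and inducting down the tower yields the claimed presentation. For a general Bott matrix $A=PBP^{-1}$ with $B$ strictly upper triangular and $\sigma$ the permutation with matrix $P$, the coordinate permutation from the proof of Lemma~\ref{free} induces a diffeomorphism $M(A)\cong M(B)$ sending the generator $x_j$ of $H^*(M(A);\ZZ)$ to the generator $x_{\sigma(j)}$ of $H^*(M(B);\ZZ)$; substituting this into the presentation of $M(B)$ and using $A^i_j=B^{\sigma(i)}_{\sigma(j)}$ from \eqref{SA=BA1} recovers the presentation of $M(A)$.

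For (i), since $\Gamma(A)$ acts freely on $\R^n$ by Euclidean motions with quotient $M(A)$, the manifold $M(A)$ is orientable if and only if every generator $s_i$ preserves orientation, that is, its linear part $\operatorname{diag}((-1)^{A^i_1},\dots,(-1)^{A^i_n})$ (recall $A^i_i=0$) has determinant $(-1)^{\sum_j A^i_j}$ equal to $+1$; hence $M(A)$ is orientable exactly when $\sum_{j=1}^n A^i_j=0$ in $\ZZ$ for every $i$. (Equivalently this exhibits $w_1(M(A))=\sum_i\big(\sum_j A^i_j\big)x_i$ inside the $\ZZ$-vector space $H^1(M(A);\ZZ)$ spanned by $x_1,\dots,x_n$.)

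For (ii), the holonomy group of $\Gamma(A)$ is generated by $h_i=\operatorname{diag}((-1)^{A^i_1},\dots,(-1)^{A^i_n})$, and $h_k$ fixes $du_i\wedge du_j$ if and only if $A^k_i=A^k_j$, so $du_i\wedge du_j$ is holonomy-invariant if and only if the columns $A_i$ and $A_j$ coincide. If every class of the equivalence relation $A_i=A_j$ on $\{1,\dots,n\}$ has even size, decompose $\R^n$ along this partition, put a standard linear symplectic form on each (now even-dimensional) summand, and add them: the result is a constant-coefficient $\Gamma(A)$-invariant symplectic $2$-form on $\R^n$, which descends to a symplectic form on $M(A)$. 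Conversely, a symplectic form on $M(A)$ orients it and gives $\alpha\in H^2(M(A);\R)$ with $\alpha^{n/2}\neq 0$. Using the classical fact that the real cohomology of a compact flat manifold is represented by parallel (that is, translation- and holonomy-invariant) forms, $H^2(M(A);\R)$ has basis $\{du_i\wedge du_j : A_i=A_j,\ i<j\}$ and, $M(A)$ being orientable, $H^n(M(A);\R)=\R\cdot du_1\wedge\dots\wedge du_n$. Writing $\alpha=\sum c_{ij}\,du_i\wedge du_j$ with $(c_{ij})$ skew-symmetric and supported on pairs with $A_i=A_j$, this matrix is block-diagonal along the partition, and the coefficient of $du_1\wedge\dots\wedge du_n$ in $\alpha^{n/2}$ is, up to a nonzero scalar, the Pfaffian of $(c_{ij})$, which factors over the blocks; its nonvanishing forces every class to have even size.

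The ring presentation and the determinant computation for (i) are routine. The delicate point is the ``only if'' direction of (ii): one must compute $H^*(M(A);\R)$ from the flat structure, since the mod~$2$ ring is not enough, and then convert $\alpha^{n/2}\neq 0$ into the parity condition on the column classes by the block-diagonal/Pfaffian argument.
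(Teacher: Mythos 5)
Your proposal is correct, but it is worth noting that the paper does not actually prove this lemma: after reducing to the strictly upper triangular case via conjugation by a permutation matrix, it simply cites \cite[Lemmas 2.1 and 2.2]{ka-ma09} for the ring presentation and the orientability criterion, and \cite{Ishida-10} for the symplectic criterion. What you have written is a self-contained reconstruction of those cited arguments, and all three parts check out: the inductive application of the mod $2$ projective bundle formula $H^*(\RP(L\oplus\underline{\R});\ZZ)\cong H^*(B;\ZZ)[x]/(x^2+w_1(L)x)$ is exactly how the presentation is obtained in \cite{ka-ma09}; the orientability criterion via the determinant $(-1)^{\sum_j A^i_j}$ of the linear part of $s_i$ is the standard holonomy argument for flat manifolds; and the symplectic criterion via $H^*(M(A);\R)\cong H^*(T^n;\R)^{G(A)}$ (so that $H^2$ is spanned by the invariant forms $du_i\wedge du_j$ with $A_i=A_j$), together with the block-diagonal Pfaffian computation for $\alpha^{n/2}$, is precisely Ishida's argument. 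The one step you do not prove is the identification of $x_j=w_1(L(\lambda_j))$, defined through the quotient construction $T^n/G(A)$, with the tautological class of the $j$-th stage of the tower; you correctly flag this and defer it to \cite{ka-ma09}, which is consistent with the paper's own level of detail, but be aware that this identification is the genuine bridge between the two descriptions of $M(A)$ and would need its own argument in a fully self-contained treatment. The net effect of your route is a proof readable without the references, at the cost of invoking the classical facts that $H^*(\R^n/\G;\R)$ is computed by holonomy-invariant constant-coefficient forms and that a flat quotient is orientable iff its holonomy lies in $\mathrm{SO}(n)$.
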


\begin{proof}
  Since $A\in \T(n)$ is conjugate by a permutation matrix to a strictly upper triangular matrix, we may assume that $A$ is strictly upper triangular (see the proof of Lemma~\ref{free}).
  Then the first two statements are proved in \cite[Lemmas 2.1 and 2.2]{ka-ma09} and the last statement is proved in \cite{Ishida-10}.
\end{proof}

Let $A$, $B$ be Bott matrices in $\T(n)$.
Since $M(A)=T^n/G(A)$ and $M(B)=T^n/G(B)$, an affine automorphism $\f$ of $T^n$ together with a group isomorphism $\phi\colon G(B)\to G(A)$ induces an affine diffeomorphism $f\colon M(B)\to M(A)$ if $\f$ is \emph{$\phi$-equivariant}, that is
$$
    \f(gz)=\phi(g)\f(z) \text{ for }g\in G(B)\text{ and }z\in T^n.
$$
Since the actions of $G(A)$ and $G(B)$ on $T^n$ are free, the isomorphism $\phi$ will be uniquely determined by $\f$ if it exists.
We shall use $b_i$ and $y_j$ for $M(B)$ in place of $a_i$ and $x_j$ for $M(A)$.
\begin{lemm} \label{f*}
For Bott matrices $A,~B$ in $\T(n)$, let $f \colon M(B) \to M(A)$ be the affine diffeomorphism induced by a $\phi$-equivariant affine automorphism $\f$ of $T^n$, where $\phi\colon G(B) \to G(A)$ is a group isomorphism.
If $\phi(b_i)=\prod_{j=1}^na_j^{F^i_j}$ with $F^i_j\in \ZZ$,
then $f^*(x_j)=\sum_{i=1}^nF^i_jy_i$.
\end{lemm}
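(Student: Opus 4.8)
**Proof proposal for Lemma~\ref{f*}.**

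The plan is to trace the definition of $x_j = w_1(L(\lambda_j))$ through the map $f$ and identify the pullback line bundle $f^*L(\lambda_j)$ with a known line bundle over $M(B)$, then read off its first Stiefel-Whitney class. Recall that $L(\lambda)$ is the quotient of $T^n \times \R(\lambda)$ by the diagonal $G(A)$-action. First I would observe that since $f$ is covered by $\tilde f \colon T^n \to T^n$ and $\tilde f$ is $\phi$-equivariant with $\phi \colon G(B) \to G(A)$, the pullback $f^*L(\lambda_j)$ is the quotient of $T^n \times \R(\lambda_j \circ \phi)$ by the diagonal $G(B)$-action, where $\lambda_j \circ \phi \colon G(B) \to \{\pm 1\}$. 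In other words, $f^*L(\lambda_j) = L(\lambda_j \circ \phi)$ as real line bundles over $M(B)$. This is the conceptual heart of the argument: naturality of the quotient construction under equivariant maps.

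Next I would compute $\lambda_j \circ \phi$ in terms of the matrix $F$. Using the hypothesis $\phi(b_i) = \prod_{k=1}^n a_k^{F^i_k}$ and the fact that $\lambda_j(a_k) = -1$ exactly when $k = j$, we get
$$
    (\lambda_j \circ \phi)(b_i) = \lambda_j\Big(\prod_{k=1}^n a_k^{F^i_k}\Big) = (-1)^{F^i_j}.
$$
Now I would express the homomorphism $\mu \colon G(B) \to \{\pm 1\}$ with $\mu(b_i) = (-1)^{F^i_j}$ as a product of the standard characters $\lambda_i^B$ of $G(B)$ (the ones defining $y_i$): writing everything additively in $\ZZ$, we have $\mu = \sum_{i=1}^n F^i_j \lambda_i^B$, since $\lambda_i^B(b_k) = -1$ iff $k=i$. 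Because $w_1$ of a tensor product of real line bundles is the sum of the $w_1$'s, and $L(\mu_1 \otimes \mu_2) \cong L(\mu_1) \otimes L(\mu_2)$ for characters, it follows that
$$
    f^*(x_j) = w_1\big(f^*L(\lambda_j)\big) = w_1\big(L(\textstyle\sum_i F^i_j \lambda_i^B)\big) = \sum_{i=1}^n F^i_j\, w_1(L(\lambda_i^B)) = \sum_{i=1}^n F^i_j y_i,
$$
which is the claim.

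The main obstacle I anticipate is making the identification $f^*L(\lambda_j) \cong L(\lambda_j \circ \phi)$ fully rigorous, including checking that the bundle map lies over $f$ and not merely over some self-map of $M(A)$; one must be careful that $\tilde f$ being $\phi$-equivariant gives a well-defined map $T^n \times \R(\lambda_j) \to T^n \times \R(\lambda_j \circ \phi)$ descending to the quotients in the correct direction. A secondary point requiring minor care is that these are honest group homomorphisms into $\{\pm 1\}$ and the "addition" $\sum F^i_j \lambda_i^B$ is literally multiplication of characters, so that the tensor-product formula for $w_1$ applies verbatim; over $\ZZ = \Z/2$ coefficients there is no sign ambiguity. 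Everything else — the explicit evaluation of $\lambda_j \circ \phi$ on generators and the additivity of $w_1$ — is routine.
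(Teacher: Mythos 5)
Your proposal is correct and follows essentially the same route as the paper: the paper's proof is exactly the observation that $(z,u)\mapsto(\f(z),u)$ induces a bundle map $L(\lambda_j\circ\phi)\to L(\lambda_j)$ covering $f$, so that $f^*L(\lambda_j)\cong L(\lambda_j\circ\phi)$, combined with the computation $(\lambda_j\circ\phi)(b_i)=(-1)^{F^i_j}$ and additivity of $w_1$ under tensor products of line bundles. Your write-up just makes explicit the character-decomposition step that the paper leaves implicit.
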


\begin{proof}
A map $T^n\times \R(\lambda\circ\phi)\to T^n\times \R(\lambda)$ sending $(z,u)$ to $(\f(z),u)$ induces a bundle map $L(\lambda\circ\phi) \to L(\lambda)$ covering $f\colon M(B)\to M(A)$.
Since $(\lambda_j\circ\phi)(b_i)=F^i_j$, this implies the lemma.
\end{proof}

\section{Three matrix operations} \label{sect:matrix}

In this section we introduce three operations on Bott matrices used in later sections to analyze when $M(A)$ and $M(B)$ are diffeomorphic and when $H^*(M(A);\ZZ)$ and $H^*(M(B);\ZZ)$ are isomorphic for two Bott matrices $A$, $B$ in $\T(n)$.

\subsection*{Operation $\OP$.}
For a permutation matrix $P$ of a permutation $\sigma$ on $\{1,2,\ldots,n\}$, we define a map $\Phi_P$ on $n\times n$ matrices such that
$$
    \Phi_P(A):=PAP^{-1}.
$$
Thus if we set $B=\Phi_P(A)$, then
\begin{equation} \label{SA=BA}
    A^i_j=B^{\sigma(i)}_{\sigma(j)}
\end{equation}
as observed in \eqref{SA=BA1}.

\subsection*{Operation $\OC$.}
For $k\in\{1,2,\ldots,n\}$, we define $\Phik$ to be the operation which adds the $k$-th column of an $n\times n$ matrix to every column having $1$ in the $k$-th row.
In other words, for an $n\times n$ binary matrix $A$, the $n\times n$ matrix $\Phik(A)$ is given by
\begin{equation} \label{2nd}
    \Phik(A)_j:=A_j+A^k_jA_k\quad\text{for $j\in \{1,2,\ldots,n\}$}.
\end{equation}
We note that if $A\in \T(n)$, then $\Phik(A)\in \T(n)$.
In fact, if $A$ is strictly upper triangular, then so is $\Phi_k(A)$, and the general case reduces to the strictly upper triangular case by \eqref{SA=BA}.
Since every diagonal entry of a Bott matrix $A$ is zero, $(\Phi_k\circ \Phik)(A)=A$ and therefore $\Phi_k$ is a bijection on $\T(n)$.

\subsection*{Operation $\OR$.}
For distinct $\ell, m$ in $\{1,2,\ldots,n\}$, we define $\Philk$ on $n\times n$ matrices $A$ with $A_\ell=A_m$ to be the operation which adds the $\ell$-th row to the $m$-th row.
In other words, $\Philk(A)$ is defined to be an $n\times n$ matrix by
\begin{equation} \label{3rd}
    \Philk(A)^i:=\begin{cases} A^m + A^\ell &\text{if  $i = m$ and $A_\ell=A_m$,}\\
    A^i&\text{otherwise.}
    \end{cases}
\end{equation}
Since the diagonal entries of a Bott matrix $A$ are all zero, the condition $A_\ell=A_m$ implies that
\begin{equation}\label{eqn:masuda-added}
    0 = A^\ell_\ell=A^\ell_m \quad \text{ and } \quad A^m_\ell = A^m_m = 0.
\end{equation}
and one can check that $\Philk(A)$ stays in $\T(n)$. In fact, if $A$ is upper triangular, $\Phi^{\ell,m}(A)$ is upper triangular when $\ell > m$ and is conjugate to an upper triangular matrix by the transposition $(\ell, m)$ when $\ell < m$. The general case reduces to the strictly upper triangular case by \eqref{SA=BA}.

Note that if $A_\ell=A_m$, then $\Philk(A)_\ell = \Philk(A)_m$ and $(\Philk \circ \Philk)(A)=A$.

\begin{defi}
    Two Bott matrices in $\T(n)$ are \emph{Bott equivalent} if one can be transformed into the other through a sequence of the three operations $\OP$, $\OC$ and $\OR$.
\end{defi}
We note that every Bott equivalence class in $\T(n)$ has a representative of a strictly upper triangular matrix (not necessarily unique) because of the operation $\OP$.

\begin{exam} \label{matrix34}
  There are two $2\times 2$ strictly upper triangular binary matrices and they are not Bott equivalent.
  There are $2^3=8$ strictly upper triangular binary matrices of size $3$ and they are classified into the following four Bott equivalence classes:
\begin{enumerate}[(1)]
    \item The zero matrix of size $3$
    \item
    ${\tiny
    \begin{pmatrix}
        0 & 1 & 0\\
        0 & 0 & 0\\
        0 & 0 & 0
    \end{pmatrix}\quad
    \begin{pmatrix}
        0 & 0 & 1\\
        0 & 0 & 0\\
        0 & 0 & 0
    \end{pmatrix}\quad
    \begin{pmatrix}
        0 & 0 & 0\\
        0 & 0 & 1\\
        0 & 0 & 0
    \end{pmatrix}\quad
    \begin{pmatrix}
        0 & 0 & 1\\
        0 & 0 & 1\\
        0 & 0 & 0
    \end{pmatrix}
    }$
    \item
    ${\tiny
    \begin{pmatrix}
        0 & 1 & 1\\
        0 & 0 & 0\\
        0 & 0 & 0
    \end{pmatrix}
    }$
    \item
    ${\tiny
    \begin{pmatrix}
        0 & 1 & 0\\
        0 & 0 & 1\\
        0 & 0 & 0
    \end{pmatrix}\quad
    \begin{pmatrix}
        0 & 1 & 1\\
        0 & 0 & 1\\
        0 & 0 & 0
    \end{pmatrix}
    }$
\end{enumerate}
  There are $2^6=64$ strictly upper triangular binary matrices of size $4$ and they are classified into $12$ Bott equivalence classes, see \cite{ka-ma09} and \cite{nazr08}.
  Furthermore, there are $2^{10}=1024$ strictly upper triangular binary matrices of size $5$ and they are classified into $54$ Bott equivalence classes; see Table~\ref{tab:Bott equivalence} in Section~\ref{sect:intro}.
\end{exam}

\begin{exam} \label{Deltan}
  Let $\Delta(n)$ be the set of all $n\times n$ strictly upper triangular binary matrices $A$ such that $A^1_2=A^2_3=\cdots=A^{n-1}_n=1$.
  One can change the $(i,i+2)$ entry into $0$ for $i=1,\ldots,n-2$ using the operation $\OC$, so that $A$ is Bott equivalent to the matrix $\bA$ of the following form
$$
\footnotesize
\bA=\begin{pmatrix}
0&1&0&\bA^1_4&\bA^1_5&\dots&\bA^1_{n-1}&\bA^1_n\\
0&0&1&0&\bA^2_5&\dots&\bA^2_{n-1}&\bA^2_n\\
\vdots&\vdots& \ddots &\ddots &\ddots&\ddots &\vdots & \vdots\\
%\vdots&\vdots&  &\ddots &\ddots& \ddots&\vdots & \vdots\\
0& 0& \dots&0&1 &0 & \bA^{n-4}_{n-1} & \bA^{n-4}_n\\
0& 0& \dots&0&0  &1 & 0 & \bA^{n-3}_n\\
0& 0& \dots&0&0  &0 & 1 & 0\\
0& 0& \dots&0&0  &0 & 0 & 1\\
0& 0& \dots&0&0  &0 & 0 & 0
\end{pmatrix}.
$$
The matrix $\bA$ is uniquely determined by $A$ and two matrices $A,B\in \Delta(n)$ are Bott equivalent if and only if $\bA=\bB$.
Therefore $\Delta(n)$ has exactly $2^{(n-2)(n-3)/2}$ Bott equivalence classes for $n\ge 2$.
\end{exam}

\section{Acyclic digraphs} \label{section:acyclic digraphs}
In this section, we spell out what the operations $\OC$ and $\OR$ correspond to under the translation between binary matrices and directed graphs.
A \emph{directed graph} (simply \emph{digraph}) $D$ consists of a finite set $V(D)$ of elements called \emph{vertices} and a set $A(D)$ of ordered pairs of distinct vertices called \emph{arcs}.
Two digraphs $D$ and $H$ are \emph{isomorphic} if there is a bijection $\psi\colon V(D)\to V(H)$ such that $(u,v)\in A(D)$ if and only if $(\psi(u),\psi(v))\in A(H)$.
If $(u,v)\in A(D)$, then $v$ is called an \emph{out-neighbor} of $u$ and $u$ is called an \emph{in-neighbor} of $v$.
For a vertex $v$ of $D$, we denote by $N_D^+(v)$ and $N_D^-(v)$ the set of all out-neighbors and in-neighbors, respectively, of $v$.
The \emph{out-degree}  $\deg_D^+(v)$ and the \emph{in-degree} $\deg_D^-(v)$ of $v\in V(D)$ are the number of out-neighbors and in-neighbors, respectively, of $v$.
An ordering $v_1,v_2,\ldots,v_n$ of the vertices is called \emph{acyclic} if $i<j$ whenever $(v_i,v_j) \in A(D)$.
A digraph is called \emph{acyclic} if it admits an acyclic ordering.
Equivalently, a digraph is acyclic if and only if it has no directed cycle; see
\cite[Proposition 1.4.3]{baje-gu01}.

For a digraph $D$ with an ordering of the vertices $v_1,v_2,\ldots,v_n$, the \emph{adjacency matrix} $A_D$ of $D$ is an $n\times n$ binary matrix such that $(A_D)^i_j=1$ if and only if $(v_i,v_j)\in A(D)$.
To a Bott matrix $A$ in $\T(n)$, we associate a digraph $D_A$ on $n$ vertices $\{v_1,v_2,\ldots,v_n\}$ in such a way that $(v_i,v_j)$ is an arc of $D_A$ if and only if $A^i_j=1$.
Equivalently $D_A$ is the digraph whose adjacency matrix is $A$.
In other words, $v_j$ is an out-neighbor of $v_i$ in $D_A$ if and only if $A^i_j=1$.
Therefore,
\[
\begin{split}
N_{D_A}^+(v_i)=\{ v_j\mid A^i_j=1\} \quad &\text{and}\quad
\deg_{D_A}^+(v_i)=\lvert\{ j \mid A^i_j=1\}\rvert, \\
N_{D_A}^-(v_j)=\{ v_i\mid A^i_j=1\} \quad &\text{and}\quad \deg_{D_A}^-(v_j)=\lvert\{ i \mid A^i_j=1\}\rvert,
\end{split}
\]
and the statements (i) and (ii) in Lemma~\ref{cohoA} can be translated as follows.

\begin{lemm} \label{ori-sym}
Let $A\in \T(n)$.  Then
\begin{enumerate}[(i)]
\item $M(A)$ is orientable if and only if
every vertex of $D_A$ has an even out-degree.
\item $M(A)$ admits a symplectic form if and only if
for each vertex  $v$ of $D_A$,
there are even number of vertices (including $v$ itself) having the
same set of in-neighbors with $v$.
\end{enumerate}
\end{lemm}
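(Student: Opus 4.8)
The plan is to simply unwind the two conditions of Lemma~\ref{cohoA} through the dictionary between a Bott matrix $A$ and its digraph $D_A$ recorded just before the statement.

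For part (i), I would start from Lemma~\ref{cohoA}(i), which asserts that $M(A)$ is orientable if and only if $\sum_{j=1}^n A^i_j=0$ in $\ZZ$ for every $i$. Since $\ZZ=\Z/2$, the vanishing of $\sum_{j=1}^n A^i_j$ is exactly the statement that $\lvert\{j\mid A^i_j=1\}\rvert$ is even. By the displayed formula $\deg_{D_A}^+(v_i)=\lvert\{j\mid A^i_j=1\}\rvert$ established above, this is precisely the condition that the vertex $v_i$ has even out-degree in $D_A$; quantifying over all $i$ finishes this case.

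For part (ii), I would start from Lemma~\ref{cohoA}(ii), which says $M(A)$ admits a symplectic form if and only if $\lvert\{k\mid A_k=A_j\}\rvert$ is even for every $j$. The key observation is that $A_k=A_j$ means the $k$-th and $j$-th columns of $A$ agree, i.e.\ $A^i_k=A^i_j$ for all $i$; by the displayed formula $N_{D_A}^-(v_j)=\{v_i\mid A^i_j=1\}$ this is equivalent to $N_{D_A}^-(v_k)=N_{D_A}^-(v_j)$. Hence $\{k\mid A_k=A_j\}$ is exactly the set of vertices of $D_A$ sharing the same in-neighbor set as $v_j$, and it always contains $v_j$ itself; requiring this set to have even cardinality for every $j$ is the asserted condition. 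One minor point worth recording is that, since every diagonal entry of a Bott matrix is zero, $v_j$ is never one of its own in-neighbors, so ``having the same set of in-neighbors as $v_j$'' is an unambiguous equivalence relation on the vertex set.

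There is no real obstacle here: the content is entirely a translation, and the proof of Lemma~\ref{ori-sym} reduces to reading off the formulas for $\deg_{D_A}^+$ and $N_{D_A}^-$ given before the statement. The only thing to keep straight is that the arithmetic in Lemma~\ref{cohoA} takes place over $\Z/2$, so ``$\sum=0$'' must be read as ``even cardinality''.
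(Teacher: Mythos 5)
Your proposal is correct and is exactly the paper's argument: the paper presents Lemma~\ref{ori-sym} as the direct translation of Lemma~\ref{cohoA}(i) and (ii) via the displayed identities $\deg_{D_A}^+(v_i)=\lvert\{j\mid A^i_j=1\}\rvert$ and $N_{D_A}^-(v_j)=\{v_i\mid A^i_j=1\}$, which is precisely what you spell out. No gaps.
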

We claim that $D_A$ is acyclic for each Bott matrix $A$ in $\T(n)$.
If $A$ is strictly upper triangular, this is obvious because $v_1,v_2,\ldots,v_n$ is an acyclic ordering.
When $A$ is conjugate to a strictly upper triangular matrix $B$ by a permutation matrix of a permutation $\sigma$ on $\{1,2,\ldots,n\}$, then $A^i_j=B^{\sigma(i)}_{\sigma(j)}$ as observed in \eqref{SA=BA1} and therefore $D_A$ is isomorphic to $D_B$ which is acyclic.

The mapping from $\T(n)$ to the set of acyclic digraphs on fixed $n$ vertices $\{v_1,v_2,\ldots,v_n\}$ is bijective.
Therefore, the three operations introduced in Section~\ref{sect:matrix} can be translated and visualized as operations on acyclic digraphs.
It is easy to see that the operation $\OP$ corresponds to the isomorphism of graphs.

In the following, we will discuss operations corresponding to $\OC$ and $\OR$.
For sets $X$ and $Y$, we denote $(X\setminus Y)\cup (Y\setminus X)$ by $X\Delta Y$.

\subsection*{Local complementation.}
Let $D$ be a digraph.
For $v\in V(D)$, we define $D*v$ to be the digraph with $V(D*v)=V(D)$ and
$$
    A(D*v) = A(D) \Delta \{ (u,w) \in N_D^-(v) \times N_D^+(v)\}.
$$
Namely, $(u,w)\in N_D^-(v)\times N_D^+(v)$ is removed from $D$ if it is an arc of $D$ and added to $D$ otherwise.
The operation to obtain $D*v$ from $D$ is called the \emph{local complementation} at $v$. See Figure~\ref{fig:lc}.

\begin{figure}
  \tikzstyle{v}=[circle, draw, solid, fill=black!50, inner sep=0pt, minimum width=4pt]
  \tikzstyle{every edge}=[->,>=stealth,draw]
 \begin{tikzpicture}[thick,scale=0.5]
    \node [v] (bot1) {};
    \node [v] (bot2) [right of=bot1]{};
    \node [v] (bot3) [right of=bot2]{};
    \node[v] (mid1) [above of=bot1]{}
    edge[->] (bot2);
    \node[v] (mid2) [right of=mid1][label=right:$v$] {}
    edge[->] (bot1)
    edge[->] (bot2);
   \node [v](mid3) [right of=mid2] {}
    edge[->] (bot3);
    \node [v] (top1) [above right of=mid1] {}
    edge[->] (bot1)
    edge[->] (mid1)
    edge[->] (mid2);
    \node [v] (top2) [right of=top1] {}
    edge[->] (mid1)
    edge[->] (mid2)
    edge[->] (mid3)
    edge[->] (bot2);
    \node [below of=bot2,node distance=3mm] {$D$};
\end{tikzpicture}
  \hspace{1cm}
 \begin{tikzpicture}[thick,scale=0.5]
    \node [v] (bot1) {};
    \node [v] (bot2) [right of=bot1]{};
    \node [v] (bot3) [right of=bot2]{};
    \node[v] (mid1) [above of=bot1]{}
    edge[->] (bot2);
    \node[v] (mid2) [right of=mid1][label=right:$v$] {}
    edge[->] (bot1)
    edge[->] (bot2);
   \node [v](mid3) [right of=mid2] {}
    edge[->] (bot3);
    \node [v] (top1) [above right of=mid1] {}
    edge[->,bend angle=20,bend right,color=red] (bot2)
    edge[->] (mid1)
    edge[->] (mid2);
    \node [v] (top2) [right of=top1] {}
    edge[->] (mid1)
    edge[->] (mid2)
    edge[->] (mid3)
    edge[->,bend angle=20,bend right,color=red] (bot1);
    \node [below of=bot2,node distance=3mm] {$D*v$};
\end{tikzpicture}
\caption{A local complementation.}
\label{fig:lc}
\end{figure}
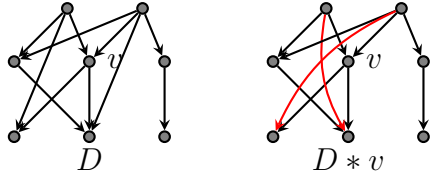

Note that $D*v*v=D$.
If $D$ is acyclic, then so is $D*v$.

\subsection*{Slide.}
For distinct vertices $u, v$ of a digraph $D$ with $N_D^-(u)=N_D^-(v)$ (possibly empty), we define $D\slides{u}{v}$ to be the digraph with $V(D\slides{u}{v})=V(D)$ and
$$
    A(D\slides{u}{v}) = A(D) \Delta \{(v,w) \mid w\in N_D^+(u)\}.
$$
Namely, when $N_D^-(u)=N_D^-(v)$, $(v,w)$ for $w\in N_D^+(u)$ is removed from $D$ if $w\in N_D^+(v)$ and added to $D$ otherwise.
We call it the \emph{slide} on $uv$.
See Figure~\ref{fig:local complement and slides}.
\begin{figure}
  \centering
  \tikzstyle{v}=[circle, draw, solid, fill=black!50, inner sep=0pt, minimum width=4pt]
  \tikzstyle{every edge}=[->,>=stealth,draw]
 \begin{tikzpicture}[thick,scale=0.5]
    \node [v] (bot1) {};
    \node [v] (bot2) [right of=bot1]{};
    \node [v] (bot3) [right of=bot2]{};
    \node[v] (mid1) [above of=bot1][label=left:$u$]   {}
    edge[->] (bot2);
    \node[v] (mid2) [right of=mid1][label=right:$v$] {}
    edge[->] (bot1)
    edge[->] (bot2);
   \node [v](mid3) [right of=mid2] {}
    edge[->] (bot3);
    \node [v] (top1) [above right of=mid1] {}
    edge[->] (bot1)
    edge[->] (mid1)
    edge[->] (mid2);
    \node [v] (top2) [right of=top1] {}
    edge[->] (mid1)
    edge[->] (mid2)
    edge[->] (mid3)
    edge[->] (bot2);
    \node [below of=bot2,node distance=3mm] {$D$};
\end{tikzpicture}
  \hspace{1cm}
 \begin{tikzpicture}[thick,scale=0.5]
    \node [v] (bot1) {};
    \node [v] (bot2) [right of=bot1]{};
    \node [v] (bot3) [right of=bot2]{};
    \node[v] (mid1) [above of=bot1][label=left:$u$]   {}
    edge[->] (bot2);
    \node[v] (mid2) [right of=mid1][label=right:$v$] {}
    edge[->] (bot1);
  \node [v](mid3) [right of=mid2] {}
    edge[->] (bot3);
    \node [v] (top1) [above right of=mid1] {}
    edge[->] (bot1)
    edge[->] (mid1)
    edge[->] (mid2);
    \node [v] (top2) [right of=top1] {}
    edge[->] (mid1)
    edge[->] (mid2)
    edge[->] (mid3)
    edge[->] (bot2);
   \node [below of=bot2,node distance=3mm] {$D\slides{u}{v}$};
 \end{tikzpicture}
  \hspace{1cm}
 \begin{tikzpicture}[thick,scale=0.5]
    \node [v] (bot1) {};
    \node [v] (bot2) [right of=bot1]{};
    \node [v] (bot3) [right of=bot2]{};
    \node[v] (mid1) [above of=bot1][label=left:$u$]   {}
    edge[->,color=red] (bot1);
    \node[v] (mid2) [right of=mid1][label=right:$v$] {}
    edge[->] (bot1)
    edge[->] (bot2);
   \node [v](mid3) [right of=mid2] {}
    edge[->] (bot3);
    \node [v] (top1) [above right of=mid1] {}
    edge[->] (bot1)
    edge[->] (mid1)
    edge[->] (mid2);
    \node [v] (top2) [right of=top1] {}
    edge[->] (mid1)
    edge[->] (mid2)
    edge[->] (mid3)
    edge[->] (bot2);
   \node [below of=bot2,node distance=3mm] {$D\slides{v}{u}$};
\end{tikzpicture}
\caption{A slide.}
 \label{fig:local complement and slides}
\end{figure}
We were not able to find a literature on slides.
Note that $D\slides{u}{v}\slides{u}{v}=D$ and $D\slides{u}{v}\neq D\slides{v}{u}$.
If $D$ is acyclic, then so is $D\slides{u}{v}$.

The acyclic digraph $D_A$ associated with a Bott matrix $A$ in $\T(n)$ has the canonical acyclic ordering $v_1,\dots,v_n$ of the vertices and one can easily check that
\[
D_{\Phik(A)}=D_A*v_k \quad\text{and}\quad D_{\Philk(A)}=D_A\slides{v_\ell}{v_m}.
\]
This means that the operation $\Phik$ in \eqref{2nd} corresponds to the local complementation at $v_k$ and the operation $\Philk$ in \eqref{3rd} corresponds to the slide on $v_{\ell}v_m$.

We say that two digraphs are \emph{Bott equivalent} if one is transformed into an isomorphic copy of the other through successive application of local complementations and slides.
The above observation shows that the correspondence $A\to D_A$ gives a bijective correspondence between Bott equivalence classes in $\T(n)$ and Bott equivalence classes of acyclic digraphs on $n$ vertices.

\begin{exam}
    There are $2$ non-isomorphic acyclic digraphs on $2$ vertices and they are not Bott equivalent.
    There are $6$ non-isomorphic acyclic digraphs on $3$ vertices and they are classified into four Bott equivalence classes listed in Figure~\ref{Fig:3nodes_class}. (Compare with Example~\ref{matrix34}.)

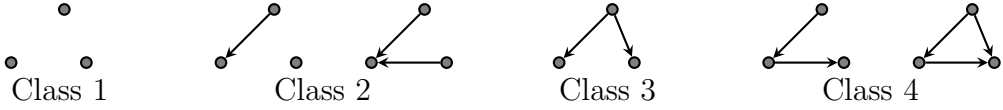
\begin{figure}
    \centering
\tikzstyle{v}=[circle, draw, solid, fill=black!50, inner sep=0pt, minimum width=4pt]
\tikzstyle{every edge}=[->,>=stealth,draw]
\begin{tikzpicture}[thick,scale=0.5]
    \node [v] (to) {};
    \node [v] (le) [below left of=to]{};
    \node [v] (ri) [right of=le]{};
   \node [below left of=ri,node distance=5mm] {Class $1$};
\end{tikzpicture}
  \hspace{1cm}
\begin{tikzpicture}[thick,scale=0.5]
    \node [v] (to) {};
    \node [v] (le) [below left of=to]{}
    edge[<-] (to);
    \node [v] (ri) [right of=le]{};
    \node [v] (le2) [right of=ri]{};
    \node [v] (ri2) [right of=le2]{}
    edge[->] (le2);
    \node [v] (to2) [above right of =le2]{}
    edge[->] (le2);
    \node [below right of=ri,node distance=5mm] {Class $2$};
\end{tikzpicture}
  \hspace{1cm}
\begin{tikzpicture}[thick,scale=0.5]
    \node [v] (to) {};
    \node [v] (le) [below left of=to]{}
    edge[<-] (to);
    \node [v] (ri) [right of=le]{}
    edge[<-] (to);
    \node [below left of=ri,node distance=5mm] {Class $3$};
\end{tikzpicture}
  \hspace{1cm}
\begin{tikzpicture}[thick,scale=0.5]
    \node [v] (to) {};
    \node [v] (le) [below left of=to]{}
    edge[<-] (to);
    \node [v] (ri) [right of=le]{}
    edge[<-] (le);
    \node [v] (le2) [right of=ri]{};
    \node [v] (ri2) [right of=le2]{}
    edge[<-] (le2);
    \node [v] (to2) [above right of =le2]{}
    edge[->] (le2)
    edge[->] (ri2);
    \node [below right of=ri,node distance=5mm] {Class $4$};
\end{tikzpicture}
  \caption{Four Bott equivalence classes of acyclic digraphs on $3$ vertices.}\label{Fig:3nodes_class}
\end{figure}
\end{exam}

    We list the number $\Diff{n}$ of Bott equivalence classes of acyclic digraphs on $n$ vertices up to $n=8$ in Table~\ref{tab:Bott equivalence} of Section~\ref{sect:intro}.
    Note that $\Diff{n}$ is in between $2^{(n-2)(n-3)/2}$ (see Example~\ref{Deltan}) and the number of non-isomorphic acyclic digraphs on $n$ vertices counted by Robinson~\cite{Rob-77}.

\section{Affine diffeomorphisms} \label{sect:affine}

In this section we associate an affine diffeomorphism between real Bott manifolds to each operation introduced in Section~\ref{sect:matrix}, and prove the implication $(1)\Rightarrow (2)$ in Theorem~\ref{main}.
We restate it for convenience as follows.

\begin{prop} \label{proposition:5.1}
    If Bott matrices $A$, $B$ in $\T(n)$ are Bott equivalent, then the associated real Bott manifolds $M(A)$ and $M(B)$ are affinely diffeomorphic.
\end{prop}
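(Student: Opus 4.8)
The plan is to prove the proposition by treating each of the three generating operations $\OP$, $\OC$, $\OR$ separately: for each operation sending a Bott matrix $A$ to a Bott matrix $B$, we exhibit an explicit $\phi$-equivariant affine automorphism $\f$ of $T^n$ for a suitable group isomorphism $\phi\colon G(B)\to G(A)$, which by the discussion preceding Lemma~\ref{f*} descends to an affine diffeomorphism $M(B)\to M(A)$. Since Bott equivalence is generated by these operations and composition of affine diffeomorphisms is again an affine diffeomorphism, this suffices. The main point in each case is to write down $\f$ as a map of the form $(z_1,\dots,z_n)\mapsto (\pm z_{\tau(1)}^{\pm1},\dots)$ built from a permutation of coordinates, complex conjugations, and possibly multiplication by products of other coordinates, and then to check the equivariance identity $\f(b_i z)=\phi(b_i)\f(z)$ for each generator $b_i$ of $G(B)$ against the definition \eqref{ai} of the involutions.

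For $\OP$ with $B=\Phi_P(A)=PAP^{-1}$ and $\sigma$ the underlying permutation, the natural candidate is the coordinate permutation $\f(z_1,\dots,z_n)=(z_{\sigma^{-1}(1)},\dots,z_{\sigma^{-1}(n)})$ together with $\phi(b_i)=a_{\sigma^{-1}(i)}$ (or its inverse indexing, to be fixed by the computation); the relation $A^i_j=B^{\sigma(i)}_{\sigma(j)}$ from \eqref{SA=BA} makes the equivariance a bookkeeping check — this is essentially already contained in the proof of Lemma~\ref{free}. For $\OC$ with $B=\Phi_k(A)$, the defining formula \eqref{2nd} $B_j=A_j+A^k_jA_k$ suggests using the automorphism $\f$ that sends $z_k\mapsto z_k$ and, for each $j\neq k$, replaces $z_j$ by $z_j\cdot z_k(A^k_j)$ — i.e. multiplies the $j$-th coordinate by $z_k$ (or $\bar z_k$) exactly when $A^k_j=1$ — while taking $\phi$ to be (close to) the identity on generators, possibly with $\phi(b_k)$ adjusted since $a_k$ itself acts by conjugation on the coordinates indexed by $N^+(v_k)$. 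For $\OR$ with $B=\Philk(A)$ and $A_\ell=A_m$, the row operation $B^m=A^\ell+A^m$ means $G(B)$ and $G(A)$ differ only in that $b_m=a_\ell a_m$ "should" correspond to the product $a_\ell a_m$; here one takes $\f$ to be the identity map of $T^n$ and $\phi\colon G(B)\to G(A)$ the isomorphism with $\phi(b_i)=a_i$ for $i\neq m$ and $\phi(b_m)=a_\ell a_m$, then verifies using \eqref{eqn:masuda-added} (the entries $A^\ell_\ell=A^\ell_m=A^m_\ell=A^m_m=0$) and $A_\ell=A_m$ that $a_\ell a_m$ really does act on $T^n$ as the involution associated to the row $A^\ell+A^m$, so that $\f=\mathrm{id}$ is $\phi$-equivariant and $\phi$ is a well-defined group isomorphism.

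I expect the operation $\OC$ to be the main obstacle, since there the affine automorphism $\f$ genuinely mixes coordinates (multiplication by $z_k$ on a whole block of coordinates), and one must check two things simultaneously: first, that $\f$ is well-defined as a map $T^n\to T^n$ and is indeed an affine automorphism in the sense used by Lemma~\ref{f*} (it is a monomial map whose exponent matrix is unipotent, hence invertible over $\Z$, so this is fine but should be stated); and second, that for \emph{every} generator $b_i$ of $G(B)$ the conjugate $\f\circ b_i\circ \f^{-1}$ equals some explicit product of the $a_j$'s — the case $i=k$ and the cases $i\in N^+(v_k)$ requiring the most care because of the interaction between the $-z_i$ sign flips, the conjugations $z_j(B^i_j)$ versus $z_j(A^i_j)$, and the inserted factors $z_k(A^k_j)$. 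Once the three equivariant pairs $(\f,\phi)$ are in hand, the conclusion that Bott-equivalent matrices give affinely diffeomorphic manifolds is immediate by composing the resulting diffeomorphisms along a sequence of operations witnessing the Bott equivalence.
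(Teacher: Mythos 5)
Your overall strategy is exactly the paper's: reduce to the three generating operations, and for each one exhibit a group isomorphism $\phi\colon G(B)\to G(A)$ together with a $\phi$-equivariant affine automorphism $\f$ of $T^n$. The $\OP$ case is fine (modulo the inverse indexing you already flag). However, the explicit maps you propose for $\OC$ and $\OR$ are both wrong, and in neither case can the failure be repaired by adjusting $\phi$ alone, so the verification you defer would not go through.

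For $\OR$, take $\f=\mathrm{id}$ and $\phi(b_m)=a_\ell a_m$ as you suggest. Since $A_\ell=A_m$ forces $A^m_\ell=A^\ell_\ell=0$, the involution $b_m$ fixes the $\ell$-th coordinate ($B^m_\ell=A^\ell_\ell+A^m_\ell=0$), whereas $a_\ell a_m$ sends $z_\ell$ to $-z_\ell$ (the factor $a_\ell$ negates it and $a_m$ leaves it alone). So $\f(b_m(z))_\ell=z_\ell\neq -z_\ell=(a_\ell a_m)(\f(z))_\ell$. No other choice of $\phi(b_m)$ works either: any product $\prod_j a_j^{F_j}$ negates exactly the coordinates $j$ with $F_j=1$, so matching the single sign flip of $b_m$ forces $\phi(b_m)=a_m$, which then conjugates the wrong set of coordinates whenever $A^\ell\neq 0$. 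The paper's fix is the shear $\flk(z)=(\dots,z_\ell z_m,\dots)$ in the $\ell$-th slot, which absorbs the missing sign because $b_m$ negates $z_m$. For $\OC$, your monomial map (multiply $z_j$ by $z_k^{\pm1}$ when $A^k_j=1$, fix $z_k$) fails for the same structural reason: since $\f(z)_k=z_k$, equivariance at the $k$-th coordinate forces $\phi(b_i)$ to contain no factor $a_k$, but then at a coordinate $j$ with $A^i_j=A^i_k=A^k_j=1$ (a transitive triangle in $D_A$) the left side equals $z_j\bar z_k^{\pm1}$, which is not of the form $\pm(z_jz_k^{\pm1})$ or $\pm\overline{(z_jz_k^{\pm1})}$ and hence is not attainable by any element of $G(A)$ acting on $\f(z)$. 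The correct map is much simpler and does not mix coordinates at all: $\fk$ multiplies only the $k$-th coordinate by $\sqrt{-1}$, paired with $\phik(b_i)=a_ia_k^{A^i_k}$; the quarter-turn converts the conjugation $z_k\mapsto\bar z_k$ performed by $a_k^{A^i_k}$ into the map $\sqrt{-1}z_k\mapsto -\overline{\sqrt{-1}z_k}=\sqrt{-1}\bar z_k$, which is what $b_i$ requires. These two missing ideas (the quarter-rotation for $\OC$ and the shear for $\OR$) are the substance of the proof, so as written the proposal has a genuine gap.
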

\begin{proof}
It suffices to find a group isomorphism $\phi\colon G(B)\to G(A)$ and a $\phi$-equivariant affine automorphism $\f$ of $T^n$ which induces an affine diffeomorphism from $M(B)$ to $M(A)$.
We may assume that $B=\Phi_P(A)$, $B=\Phik(A)$, or $B=\Philk(A)$.

\medskip
{\it The case of the operation $\OP$.}
Suppose $B=\Phi_P(A)$ for a permutation matrix~$P$ of a permutation $\sigma$ on $\{1,2,\ldots,n\}$.

We define a group isomorphism $\phi_P\colon G(B)\to G(A)$ by
\begin{equation} \label{phiS}
    \phi_P(b_{\sigma(i)}):=a_{i}
\end{equation}
and an affine automorphism $\f_P$ of $T^n$ by
\[
    \f_P(z_1,\dots,z_n):=(z_{\sigma(1)},\dots,z_{\sigma(n)}).
\]
Then it follows from \eqref{ai} (applied to $b_{\sigma(i)}$) that the $j$-th component of $\f_P(b_{\sigma(i)}(z))$ ($z \in T^n$) is $z_{\sigma(j)}(B^{\sigma(i)}_{\sigma(j)})$ for $j\neq i$ and $-z_{\sigma(i)}$ for $j=i$ while that of $a_i(\f_P(z))$ is $z_{\sigma(j)}(A^i_j)$ for $j\neq i$ and $-z_{\sigma(i)}$ for $j=i$.
Since $A^i_j=B^{\sigma(i)}_{\sigma(j)}$ by \eqref{SA=BA}, this shows that $\f_P$ is $\phi_P$-equivariant.

It follows from Lemma~\ref{f*} and \eqref{phiS} that the affine diffeomorphism $f_P\colon M(B)\to M(A)$ induced from $\f_P$ satisfies
\begin{equation} \label{op1coho}
    f_P^*(x_j)=y_{\sigma(j)}\quad \text{for all $j\in \{1,2,\ldots,n\}$.}
\end{equation}

\medskip
{\it The case of the operation $\OC$.}
Suppose $B=\Phik(A)$.
We define a group isomorphism $\phik\colon G(B)\to G(A)$ by
\begin{equation} \label{phik}
    \phik(b_i):=a_ia_k^{A^i_k}
\end{equation}
and an affine automorphism $\fk$ of $T^n$ by
\[
    \fk(z_1,\dots,z_n):=(z_1,\dots,z_{k-1},\sqrt{-1}z_k,z_{k+1},\dots,z_n).
\]

We shall check that $\fk$ is $\phik$-equivariant, that is
\begin{equation} \label{fkb}
    \fk(b_i(z))= a_ia_k^{A^i_k}(\fk(z)) \quad\text{for all $i\in \{1,2,\ldots,n\}$ and $z\in T^n$}.
\end{equation}
The identity is obvious when $i=k$ because $A^k_k=0$ and $B^k_j=A^k_j$ for every $j$ by \eqref{2nd}.
Suppose $i\neq k$.
Then the $j$-th component of the left hand side of \eqref{fkb} is given by
\[
\begin{cases}z_j(B^i_j)\quad&\text{for $j\neq i,k$},\\
-z_i \quad&\text{for $j=i$},\\
\sqrt{-1}(z_k(B^i_k)) \quad&\text{for $j=k$},
\end{cases}
\]
while that of the right hand side of \eqref{fkb} is given by
\[
\begin{cases}z_j(A^i_j+A^k_jA^i_k)\quad&\text{for $j\neq i,k$},\\
-z_i(A^k_iA^i_k) \quad&\text{for $j=i$},\\
(-1)^{A^i_k}(\sqrt{-1}z_k)(A^i_k) \quad&\text{for $j=k$}.
\end{cases}
\]
Since $B^i_j=A^i_j+A^k_jA^i_k$ by \eqref{2nd}, the $j$-th components above agree for $j\neq i,k$.
They also agree for $j=i$ because either $A^k_i$ or $A^i_k$ is zero.
We note that $B^i_k=A^i_k$ by \eqref{2nd} because $A^k_k=0$.
Therefore the $k$-th components above are both $\sqrt{-1}z_k$ when $B^i_k=A^i_k=0$ and $\sqrt{-1}\bar{z_k}$ when $B^i_k=A^i_k=1$.
Thus the $j$-th components above agree for every $j$.

It follows from Lemma~\ref{f*} and \eqref{phik} that the affine diffeomorphism $f_k\colon M(B)\to M(A)$ induced from $\fk$ satisfies
\begin{equation} \label{op2coho}
    (f_k)^*(x_j)=y_j\quad\text{for $j\neq k$}, \quad \quad (f_k)^*(x_k)=y_k+\sum_{i=1}^nA^i_ky_i.
\end{equation}

\medskip
{\it The case of the operation $\OR$.}
Suppose that $B=\Philk(A)$.
We define a group isomorphism $\philk\colon G(B)\to G(A)$ by
\begin{equation} \label{phiIC}
    \philk(b_i):=\begin{cases} a_\ell a_m\quad& \text{for $i = m$},\\
    a_i\quad&\text{for $i \neq m$,}
\end{cases}
\end{equation}
and an affine automorphism $\flk$ of $T^n$ by
$$
    \flk(z_1, \ldots, z_n):= (z_1, \ldots, z_{\ell-1}, z_\ell z_m, z_{\ell+1}, \ldots, z_n).
$$

We shall check that $\flk$ is $\philk$-equivariant.
To simplify notation we abbreviate $\flk$ and $\philk$ as $\f$ and $\phi$ respectively.
What we prove is the identity
\begin{equation} \label{fb=bf}
    \f(b_i(z))=\phi(b_i)\f(z) \quad \text{for all $i\in\{1,2,\ldots,n\}$ and $z\in T^n$}.
\end{equation}

Assume $i \neq \ell, m$.
Then the $j$-th component of the left hand side of \eqref{fb=bf} is given by
\[
\f(b_i(z))_j=\begin{cases}z_j(B^i_j)\quad&\text{for $j\neq i,\ell$},\\
-z_i \quad&\text{for $j=i$},\\
z_\ell(B^i_\ell) z_m(B^i_m) \quad&\text{for $j=\ell$},
\end{cases}
\]
while since $\phi(b_i)=a_i$ by \eqref{phiIC}, the $j$-th component of the right hand side of \eqref{fb=bf} is given by
\[
(\phi(b_i)\f(z))_j=\begin{cases}z_j(A^i_j)\quad&\text{for $j\neq i,\ell$},\\
-z_i \quad&\text{for $j=i$},\\
(z_\ell z_m)(A^i_\ell) \quad&\text{for $j=\ell$}.
\end{cases}
\]
This shows that  $\f(b_i(z))_j=(\phi(b_i)\f(z))_j$ for all $j$ because $B^i = A^i$ by \eqref{3rd} and $A_\ell= A_m$ by the condition on $A$ (hence $B^i_j=A^i_j$ for any $j$ and $B^i_\ell=A^i_\ell=A^i_m=B^i_m$), proving \eqref{fb=bf} when $i\neq \ell, m$.

Assume $i=\ell$. Then
\[
\f(b_\ell(z))_j=\begin{cases}z_j(B^\ell_j)\quad&\text{for $j\neq \ell$},\\
-z_m(B^\ell_m)z_\ell \quad&\text{for $j=\ell$},\\
\end{cases}
\]
while since $\phi(b_\ell) = a_\ell$ by \eqref{phiIC}, we have
\[
(\phi(b_\ell)\f(z))_j=\begin{cases}z_j(A^\ell_j)\quad&\text{for $j\neq \ell$},\\
-z_\ell z_m \quad&\text{for $j=\ell$}.\\
\end{cases}
\]
This shows that $\f(b_\ell(z))_j=(\phi(b_\ell)\f(z))_j$ for all $j$ because $B^\ell_j = A^\ell_j$ for any $j$ and $B^\ell_m=A^\ell_m=0$ by \eqref{3rd} and \eqref{eqn:masuda-added}, proving \eqref{fb=bf} when $i=\ell$.

Assume $i=m$.
Then
\[
\f(b_m(z))_j=\begin{cases}z_j(B^m_j)\quad&\text{for $j\neq \ell, m$},\\
-z_m \quad&\text{for $j=m$},\\
-z_\ell(B^m_\ell)z_m \quad&\text{for $j=\ell$},\\
\end{cases}
\]
while since $\phi(b_m) = a_\ell a_m$ by \eqref{phiIC}, we have
\[
(\phi(b_m)\f(z))_j=\begin{cases}z_j(A^\ell_j + A^m_j)\quad&\text{for $j\neq \ell, m$},\\
-z_m(A^\ell_m)\quad&\text{for $j=m$},\\
(-z_\ell z_m)(A^m_\ell) \quad&\text{for $j=\ell$}.\\
\end{cases}
\]
This shows that  $\f(b_m(z))_j=(\phi(b_m)\f(z))_j$ for all $j$ because $B^m_j = A^\ell_j + A^m_j$ for any $j$, $A^\ell_m=0$, $B^m_\ell = A^\ell_\ell + A^m_\ell = 0$ and $A^m_\ell = 0$ by \eqref{3rd} and \eqref{eqn:masuda-added}, proving \eqref{fb=bf} when $i=m$.

It follows from Lemma~\ref{f*} and \eqref{phiIC} that the affine diffeomorphism $f^{\ell,m}\colon M(B)\to M(A)$ induced from $\flk$ satisfies
\begin{equation} \label{op3coho}
(f^{\ell,m})^*(x_j)= \begin{cases}y_\ell + y_m \quad&\text{for $j = \ell$,}\\
y_j\quad&\text{for $j \neq \ell$.}
\end{cases}
\end{equation}
\end{proof}

\section{Cohomology isomorphisms} \label{sect:cohom}

In this section we prove the last part of Theorem~\ref{main} and the implication (3) $\Rightarrow$ (1) at the same time, summarized in the following proposition.

\begin{prop} \label{MAMBcoho}
Let $A$, $B$ be Bott matrices in $\T(n)$.
Every graded ring isomorphism \[H^*(M(A);\ZZ)\to H^*(M(B);\ZZ)\] is induced from a composition of affine diffeomorphisms corresponding to the three operations $\OP$, $\OC$ and $\OR$.
%, and if $H^*(M(A);\ZZ)$ and $H^*(M(B);\ZZ)$ are isomorphic as graded rings, then $A$ and $B$ are Bott equivalent.
\end{prop}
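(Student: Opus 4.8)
The plan is to analyze an arbitrary graded ring isomorphism $\vf\colon H^*(M(A);\ZZ)\to H^*(M(B);\ZZ)$ and show, by induction on $n$, that after composing with affine diffeomorphisms coming from the three operations we may assume $\vf$ sends $x_j\mapsto y_j$, whence $A=B$. By Lemma~\ref{cohoA}, $H^1(M(A);\ZZ)$ is spanned by $x_1,\dots,x_n$ subject only to the quadratic relations $x_j^2=x_j\sum_i A^i_j x_i$, so $\vf$ is determined by an invertible matrix $F$ over $\ZZ$ with $\vf(x_j)=\sum_i F^i_j y_i$, and the relations must be respected. The key is to read off structural constraints on $F$ from the relations. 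First I would identify an intrinsic distinction among the basis elements: call $x_j$ of \emph{type (A)} if $x_j^2=0$ (equivalently $A^i_j=0$ for all $i$, i.e.\ $v_j$ has empty in-neighborhood in $D_A$, a ``source''), and of \emph{type (B)} otherwise. One checks that $\vf$ must send the span of type-(A) classes to the span of type-(B)$\,$-related data in a controlled way; more precisely, using the operation $\OP$ (which permutes the $x_j$) I would first reduce to the case where $A$ and $B$ are strictly upper triangular, and then isolate the ``last'' variable.

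The inductive step: the submanifold structure of the real Bott tower shows that $M(A)$ fibers over the $(n-1)$-dimensional real Bott manifold $M(A')$ associated with the matrix $A'$ obtained by deleting the last row and column, with $H^*(M(A');\ZZ)$ being the subring generated by $x_1,\dots,x_{n-1}$. The variable $x_n$ is distinguished by the property that $x_n^2$ lies in the ideal it generates together with the subring $\ZZ[x_1,\dots,x_{n-1}]$; more usefully, $x_n$ (and the $x_j$ with $A_j=A_n$) can be characterized cohomologically as those degree-one classes $x$ such that $x^2=x\cdot c$ for a \emph{fixed} class $c=\sum_i A^i_n x_i$ independent of the choice within the group. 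Applying the operation $\OR$ repeatedly, I would normalize so that among variables with equal columns only one survives as a ``free'' direction; applying $\OC$ (local complementation), which realizes the substitution $x_k\mapsto x_k+\sum_i A^i_k x_i$ on cohomology by \eqref{op2coho}, I would adjust $\vf$ so that $\vf(x_n)$ is, up to sign (but we are over $\ZZ$, so up to nothing), equal to $y_n$ or $y_m$ for an appropriate $m$. After this normalization $\vf$ descends to an isomorphism $H^*(M(A');\ZZ)\to H^*(M(B');\ZZ)$ of the $(n-1)$-dimensional pieces, and the induction hypothesis, together with the fact that each operation used lifts to an affine diffeomorphism (Proposition~\ref{proposition:5.1} and the explicit formulas \eqref{op1coho}, \eqref{op2coho}, \eqref{op3coho}), finishes the argument; tracking which operations were used shows every isomorphism is induced by a composition of the corresponding affine diffeomorphisms, giving both the ``moreover'' clause of Theorem~\ref{main} and the implication (3)$\Rightarrow$(1).

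The main obstacle I anticipate is the bookkeeping around variables sharing a column of $A$, i.e.\ vertices of $D_A$ with the same in-neighbor set. For such variables the quadratic relations are identical, so $\vf$ has genuine freedom to mix them by an arbitrary invertible linear map on that block, and this freedom is exactly what the slide operation $\OR$ is designed to absorb at the level of matrices — but only up to the relations \eqref{eqn:masuda-added}. I would need to check carefully that the linear-algebra freedom in $F$ on each such block is matched precisely by compositions of slides (and permutations within the block), with no leftover isomorphisms that fail to come from an affine diffeomorphism. A secondary subtlety is ensuring the characterization of the ``top'' variable is isomorphism-invariant: I expect to phrase this via the subalgebra generated by classes $x$ with $x^2=0$ together with their iterated products, or via Poincaré duality / the structure of $H^n$, and some care is needed because over $\ZZ$ the ring $H^*(M(A);\ZZ)$ can be subtle. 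Once the distinguished variable and the block decomposition are pinned down invariantly, the remaining verifications are the routine (if lengthy) matrix computations comparing $\vf$ with the composite of the explicit cohomology actions of $\OP$, $\OC$, $\OR$.
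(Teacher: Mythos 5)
Your outline assembles the right ingredients---normalizing $\vf$ by the three operations, absorbing the mixing of variables with identical columns by slides, and using $\OC$ to remove the residual $y_j\mapsto y_j+\beta_j$ ambiguity---and your observation that the degree-one classes $x$ with $x^2=\alpha x$ for a fixed $\alpha$ are the relevant invariant is exactly the paper's notion of eigen-space. But there is a genuine gap precisely at the point you flag as ``the main obstacle'': you never establish that the solution set of $x^2=\alpha x$ in $H^1(M(A);\ZZ)$ is spanned by $\alpha$ together with those $x_i$ for which $\alpha_i=\alpha$, nor that the only $\alpha$ admitting a solution $x\neq 0,\alpha$ are the $\alpha_j$ themselves. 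This is the content of Lemma~\ref{EAa}, proved by a leading-suffix argument using the monomial basis of $H^2(M(A);\ZZ)$, and it is the step that forces the matrix $F$ of $\vf$ to be block-diagonal with respect to the sibling classes (modulo adding eigen-elements). That block structure is what guarantees that permutations, slides and local complementations suffice; without it, $F$ could a priori mix variables across different blocks and your proposed matching of ``linear-algebra freedom'' with slides does not get started.

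The inductive framing is a second, related problem. An arbitrary graded ring isomorphism has no reason to carry the subring generated by $x_1,\dots,x_{n-1}$ into the one generated by $y_1,\dots,y_{n-1}$, even after you arrange $\vf(x_n)=y_n$: for example $\vf(x_1)$ may involve $y_n$ whenever $x_1$ lies in the same eigen-space as $x_n$, or whenever $y_n$ occurs in some $\beta_j$. The descent you need is exactly what the missing block-structure lemma would supply---and once that lemma is available, the normalization already yields $\vf(x_j)=y_j$ for all $j$ simultaneously, so the induction is superfluous. The paper's proof is accordingly non-inductive: prove Lemma~\ref{EAa}, match eigen-elements using $\OP$, reduce each block matrix $F_J$ on the reduced eigen-spaces to the identity by column operations realized by $\OR$ and $\OP$, and finish with $\OC$.
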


By Proposition~\ref{proposition:5.1}, we may assume through affine diffeomorphisms corresponding to $\OP$ that our Bott matrices are strictly upper triangular.
We introduce a notion and prepare a lemma.
Remember that
\begin{equation} \label{HMA}
H^*(M(A);\ZZ)=\ZZ[x_1,\dots,x_n]/(x_j^2=x_j\sum_{i=1}^nA^i_jx_i\mid j=1,\dots,n).
\end{equation}
Note that $\{x_1, \ldots, x_n\}$ is a basis of $H^1(M(A);\ZZ)$ as a vector space. More generally, one easily sees that if $A$ is strictly upper triangular, then products $x_{i_1}\dots x_{i_q}$ $(1\le i_1<\dots<i_q\le n)$ form a basis of $H^q(M(A);\ZZ)$ as a vector space over $\ZZ$ so that the dimension of $H^q(M(A);\ZZ)$ is $\binom{n}{q}$ (see \cite[Lemma 5.3]{ma-pa08}).

We set
\[
\alpha_j=\sum_{i=1}^nA^i_jx_i\quad\text{for $j\in \{1,2,\ldots,n\}$}
\]
where $\alpha_1=0$ since $A$ is a strictly upper triangular matrix.
Then the relations in \eqref{HMA} are written as
\begin{equation} \label{alpha}
x_j^2=\alpha_jx_j \quad\text{for $j\in \{1,2,\ldots,n\}$.}
\end{equation}
Motivated by this identity we introduce the following notions which play an important role in the proof of Proposition~\ref{MAMBcoho}.

\begin{defi}
We say that an element $\alpha\in H^1(M(A);\ZZ)$ is an {\it eigen-element} of $H^*(M(A);\ZZ)$ if there exists $x\in H^1(M(A);\ZZ)$ such that
\[x^2=\alpha x,  \quad x\neq 0, \quad \text{ and } \quad x\neq\alpha.\]
The \emph{eigen-space} of $\alpha$, denoted by $\EA(\alpha)$, is the set of all elements $x\in H^1(M(A);\ZZ)$ satisfying the equation
\[x^2=\alpha x.\]
Clearly $\EA(\alpha)$ is a vector subspace of $H^1(M(A);\ZZ)$.
We also introduce a notation $\tEA(\alpha)$ which is the quotient of $\EA(\alpha)$ by the subspace spanned by $\alpha$, and call it the {\it reduced eigen-space} of $\alpha$.
\end{defi}

Eigen-elements and (reduced) eigen-spaces are invariants preserved under graded ring isomorphisms.
By \eqref{alpha}, $\alpha_1,\alpha_2,\ldots,\alpha_n$ are eigen-elements of $H^*(M(A);\ZZ)$ and the following lemma shows that these are the only eigen-elements.

\begin{lemm} \label{EAa}
Let $A$ be a strictly upper triangular Bott matrix in $\T(n)$.
Let $\alpha_j=\sum_{i=1}^n A^i_j x_i$ for all $j=1,2,\ldots,n$.
If $\alpha$ is an eigen-element of $H^1(M(A);\ZZ)$, then $\alpha=\alpha_j$ for some $j$ and the eigen-space $\EA(\alpha)$ of $\alpha$ is the span of $\{x_i\mid \alpha_i=\alpha, i\in\{1,\ldots,n\}\}\cup\{\alpha\}$.
\end{lemm}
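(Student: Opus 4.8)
The plan is to argue by induction on $n$, using the $\R P^1$-bundle $M(A)\to M(A')$, where $A'$ is the strictly upper triangular $(n-1)\times(n-1)$ matrix obtained from $A$ by deleting the last row and column. Since the monomial basis of $H^q(M(A);\ZZ)$ recalled above splits according to whether a monomial involves $x_n$, the ring $H^*(M(A);\ZZ)$ is free over its subring $H^*(M(A');\ZZ)$ with basis $\{1,x_n\}$; consequently $H^2(M(A);\ZZ)=H^2(M(A');\ZZ)\oplus x_nH^1(M(A');\ZZ)$, and for $i\le n-1$ the element $\alpha_i$ coincides with the analogous element $\alpha_i':=\sum_{k<i}A^k_ix_k$ of $H^1(M(A');\ZZ)$. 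The case $n=1$ is a direct check ($\alpha_1=0$ is the only eigen-element and $\EA(0)=H^1(M(A);\ZZ)$) and serves as the base. For the inductive step write $\alpha=\alpha'+c_nx_n$ and $x=x'+b_nx_n$ with $\alpha',x'\in H^1(M(A');\ZZ)$ and $b_n,c_n\in\ZZ$. Expanding $x^2$ and $\alpha x$ using $x_n^2=\alpha_nx_n$ together with $b_n^2=b_n$ and $2=0$ in $\ZZ$, and comparing the two summands of $H^2(M(A);\ZZ)$, the equation $x^2=\alpha x$ becomes the conjunction
\[
(x')^2=\alpha'x' \text{ in } H^*(M(A');\ZZ) \qquad\text{and}\qquad b_n(1+c_n)\alpha_n+b_n\alpha'+c_nx'=0 \text{ in } H^1(M(A');\ZZ).
\]

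Inspecting the second identity for $b_n,c_n\in\{0,1\}$ yields exactly four cases: (A) $b_n=c_n=0$, so $\alpha=\alpha'$, $x=x'$ and $x'\in\mathcal E_{A'}(\alpha')$; (B) $b_n=0$, $c_n=1$, which forces $x'=0$, hence $x=0$; (C) $b_n=1$, $c_n=0$, so $\alpha=\alpha'=\alpha_n$, with $x=x'+x_n$ and $x'\in\mathcal E_{A'}(\alpha_n)$; (D) $b_n=1$, $c_n=1$, so $\alpha'=x'$, hence $\alpha=\alpha'+x_n=x$. Here $\mathcal E_{A'}$ denotes the eigen-space formed inside $H^*(M(A');\ZZ)$; note that in cases (A) and (C) the element $\alpha$ lies in $H^1(M(A');\ZZ)$ and equals $\alpha'$.

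For the first assertion, let $\alpha$ be an eigen-element with a witness $x\neq0,\alpha$. Cases (B) and (D) are excluded; case (C) gives $\alpha=\alpha_n$; and in case (A) we have $x=x'\neq0$ and $x'\neq\alpha'$ (otherwise $x=\alpha$), so $\alpha'$ is an eigen-element of $H^*(M(A');\ZZ)$ and the inductive hypothesis gives $\alpha'=\alpha_j'=\alpha_j$ for some $j\le n-1$. Hence $\alpha=\alpha_j$ for some $j$ in every case. For the second assertion, set $S=\{i:\alpha_i=\alpha\}$. Every $z\in\{\alpha\}\cup\{x_i:i\in S\}$ satisfies $z^2=\alpha z$, so $\mathrm{span}(\{\alpha\}\cup\{x_i:i\in S\})\subseteq\EA(\alpha)$. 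For the reverse inclusion, take $x\in\EA(\alpha)$ and run the same case analysis. Cases (B) and (D) give $x=0$ and $x=\alpha$, both in the span. In cases (A) and (C) we have $\alpha'=\alpha$ and $x'\in\mathcal E_{A'}(\alpha)$; if $x'\in\{0,\alpha\}$ then $x'$ lies in the span, and otherwise $\alpha$ is an eigen-element of $H^*(M(A');\ZZ)$ witnessed by $x'$, so by induction $x'\in\mathrm{span}(\{\alpha\}\cup\{x_i:i\le n-1,\ \alpha_i=\alpha\})\subseteq\mathrm{span}(\{\alpha\}\cup\{x_i:i\in S\})$. In case (A), $x=x'$, so we are done; in case (C), $\alpha=\alpha_n$ gives $n\in S$, whence $x=x'+x_n\in\mathrm{span}(\{\alpha\}\cup\{x_i:i\in S\})$.

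The argument is essentially bookkeeping. The points that require a little attention are the use of the $H^*(M(A');\ZZ)$-module freeness of $H^*(M(A);\ZZ)$ to split $x^2=\alpha x$ into its two homogeneous components (this is what makes the induction close), and the small care needed so that the inductive hypothesis applies even when it is not a priori clear that $\alpha$ is an eigen-element of $H^*(M(A');\ZZ)$. Alternatively one can avoid the induction: writing $x=\sum_jb_jx_j$ and letting $m$ be the largest index with $b_m=1$, one expands $x^2-\alpha x=\sum_{j\le m}b_j(\alpha_j-\alpha)x_j$ in the monomial basis of $H^2(M(A);\ZZ)$ and compares the coefficients of the monomials $x_ix_m$ $(i<m)$ and $x_mx_k$ $(k>m)$; this directly forces $x=\alpha$ or $\alpha=\alpha_m$, and the description of $\EA(\alpha)$ then follows by peeling off $x_m$ and inducting on the number of nonzero coordinates $b_j$.
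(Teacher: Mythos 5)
Your induction is correct, and it is a genuinely different argument from the one in the paper. The paper works entirely inside $H^*(M(A);\ZZ)$ with no induction on $n$: for the first claim it writes $x=x_i+p_i$ and $x+\alpha=x_j+q_j$ in terms of their leading suffixes, expands $x(x+\alpha)=0$ in the monomial basis of $H^2$ to force $i=j$, and reads off $\alpha=\alpha_j$; for the description of $\EA(\alpha)$ it peels off the leading term $x_m$ of an arbitrary $x\in\EA(\alpha)$, splitting into the two cases according to whether $x_m$ occurs in $\alpha$, and descends on the leading suffix. Your ``alternative'' sketch in the last paragraph is essentially this argument. Your main proof instead exploits the tower structure: the decomposition $H^*(M(A);\ZZ)=H^*(M(A');\ZZ)\oplus x_nH^*(M(A');\ZZ)$ (which, like the paper's argument, ultimately rests on the monomial basis of \cite[Lemma~5.3]{ma-pa08}) splits $x^2=\alpha x$ into the two displayed components, and the four-way case analysis on $(b_n,c_n)$ is complete and correctly computed; in particular you rightly observe that $c_n=1$ forces $\EA(\alpha)=\{0,\alpha\}$, so an eigen-element necessarily lies in $H^1(M(A');\ZZ)$, and you take the needed care that the inductive hypothesis is only invoked once a nonzero witness $x'\neq\alpha'$ is in hand. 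What the paper's route buys is brevity and a single reusable trick (comparison of coefficients of $x_ix_m$); what yours buys is a structural explanation tied to the $\R P^1$-bundle $M(A)\to M(A')$, with the recursion over the height of the tower made explicit. Both are valid proofs of the lemma.
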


\begin{proof}
By definition there exists a non-zero element $x\in H^1(M(A);\ZZ)$ different from $\alpha$ such that $x^2=\alpha x$.
Since both $x$ and $x+\alpha$ are non-zero, there exist $i$ and $j$ such that $x=x_i+p_i$ and $x+\alpha=x_j+q_j$ where $p_i$ is a linear combination of $x_1,\dots,x_{i-1}$ and $q_j$ is a linear combination of $x_1,\dots,x_{j-1}$.
Then
\begin{equation}
\label{eqn:masuda-add2}\alpha=x_i + x_j + p_i + q_j
\end{equation}
    and
\begin{equation}
x_ix_j+x_iq_j+x_jp_i+p_iq_j=0, \label{eqn:masuda-add2-2}
\end{equation}
where \eqref{eqn:masuda-add2-2}  follows from $x(x+\alpha)=0$.
As remarked above, products $x_{i_1}x_{i_2}$ $(1\le i_1<i_2\le n)$ form a basis of $H^2(M(A);\ZZ)$, and therefore $i=j$ for \eqref{eqn:masuda-add2-2} to hold.
Then, since $x_j^2=x_j\alpha_j$ and $\alpha_j$ is a linear combination of $x_1, \ldots, x_{j-1}$, it follows from \eqref{eqn:masuda-add2-2}  that $\alpha_j=q_j+p_i$ (and $p_iq_j=0$).
This together with \eqref{eqn:masuda-add2} shows that $\alpha=\alpha_j$, proving the first statement of the lemma.

Let $W$ be the span of $\{x_i\mid \alpha_i=\alpha, i\in\{1,\ldots,n\}\}\cup \{\alpha\}$ in $H^1(M(A);\ZZ)$.
Clearly $W\subseteq \EA(\alpha)$.
To prove the second statement of the lemma, it suffices to show that every element $x$ of $\EA(\alpha)$ is in $W$.
Since $\{x_1,\ldots,x_n\}$ is a basis,  $x=\sum_{i=1}^nc_ix_i$ and $\alpha=\sum_{i=1}^n d_i x_i$ for some $c_1,\ldots,c_n,d_1,\ldots,d_n\in \ZZ$.
We proceed by the induction on the number $N(x)$ of non-zero $c_i$'s.
If $N(x)=0$, then it is trivial. So we may assume that there exists $m$ such that $c_m=1$.

If $d_m=1$, then we express $x(x+\alpha)$ as a linear combination of the basis elements $x_{i_1}x_{i_2}$ $(1\le i_1<i_2\le n)$.
Since $c_m+d_m=0$ in $\ZZ$,  $x+\alpha$ is a linear combination of $x_i$'s with $i\neq m$.
Therefore the term in $x(x+\alpha)$ which contains $x_m$ is $x_m(x+\alpha)$ and it must vanish because $x(x+\alpha)=0$.
Therefore $x=\alpha$ and thus $x\in W$.

If $d_m=0$, the term in $x(x+\alpha)$ which contains $x_m$ is $x_m(x_m+\alpha)=x_m(\alpha_m+\alpha)$ since $x_m^2=\alpha_m x_m$, and it must vanish because $x(x+\alpha)=0$.
Therefore $\alpha_m=\alpha$ and hence $x_m \in \EA(\alpha)$ and $x_m\in W$.
Since $\EA(\alpha)$ is a vector space, $x+x_m\in \EA(\alpha)$.
Observe that $N(x+x_m)=N(x)-1$ and therefore by the induction hypothesis, $x+x_m\in W$.
This implies that $x\in W$.
\end{proof}

We are now ready to prove Proposition~\ref{MAMBcoho}.
\begin{proof}[Proof of Proposition~\ref{MAMBcoho}]
As remarked before, we may assume that both $A$ and $B$ are strictly upper triangular.
Let $x_1,x_2,\ldots,x_n$ be the canonical basis of $H^*(M(A);\ZZ)$ and let $y_1,y_2,\ldots,y_n$ be the canonical basis of $H^*(M(B);\ZZ)$.
Let $\alpha_j=\sum_{i=1}^n A^i_j x_i$, $\beta_j=\sum_{i=1}^n B^i_jy_i$ for $j\in \{1,2,\ldots,n\}$.

Let $\varphi\colon H^*(M(A);\ZZ)\to H^*(M(B);\ZZ)$ be a graded ring isomorphism.
It preserves the eigen-elements and (reduced) eigen-spaces.
In the following we shall show that we can change $\varphi$ into the identity map by composing isomorphisms induced from affine diffeomorphisms corresponding to the operations $\OP$, $\OC$ and $\OR$.

Through an affine diffeomorphism corresponding to the operation $\OP$ we may assume that $\varphi(\alpha_j)=\beta_j$ for all $j$ because of \eqref{op1coho}.
Then $\varphi$ restricts to an isomorphism $\EA(\alpha_j) \to \EB(\beta_j)$ between eigen-spaces and induces an isomorphism $\tEA(\alpha_j)\to \tEB(\beta_j)$ between reduced eigen-spaces.

Let $\alpha$ and $\beta$ be eigen-elements of $H^*(M(A);\ZZ)$ and $H^*(M(B);\ZZ)$, respectively.
Suppose that $\varphi(\alpha)=\beta$.
Let
\[J=\{j \mid \alpha_j=\alpha, j\in\{1,2,\ldots,n\} \}.\]
Let $\bar x_j$ be the image of $x_j$  in $\tEA(\alpha)$ and let $\bar y_j$ be the image of $y_j$ in $\tEB(\beta)$.
By Lemma~\ref{EAa}, $\{\bar x_j\mid  j\in J\}$ is a basis of $\tEA(\alpha)$ and $\{\bar y_j \mid j\in J\}$ is  a basis of $\tEB(\beta)$.
Thus if we express
$$
    \varphi(\bar x_j)=\sum_{i\in J}F^i_j\bar y_i  \quad \text{ for $j \in J$}
$$ with $F^i_j\in \ZZ$, then the matrix $F_J:=(F^i_j)_{i,j\in J}$ is invertible.

Since $\alpha_j=\sum_{i=1}^nA^i_jx_i$, we have $A_\ell=A_m$ if and only if $\alpha_\ell=\alpha_m$.
Therefore we can apply affine diffeomorphisms corresponding to the operation $\OR$ to every pair of distinct $\ell, m$ in $J$.
Let $A' = \Philk(A)$ and let $f= f^{\ell,m} \colon M(A') \to M(A)$ be the affine diffeomorphism considered in the previous section.
Let $x_1',\ldots, x_n'$ be the canonical generators of $H^\ast(M(A');\Z/2)$.
Then it follows from \eqref{op3coho} that if we express
$$
    (\varphi \circ (f^{-1})^\ast)(\bar{x_j'}) = \sum_{i \in J} F'^i_j \bar{y_i} \quad \text{ for $j \in J$,}
$$ then the matrix $F'_J = (F'^i_j)_{i,j\in J}$ is obtained from $F_J$ by adding $m$-th column to $\ell$-th column.
Similarly, an affine diffeomorphism corresponding to the operation $\OP$ induces a permutation of columns of $F_J$ by \eqref{op1coho}.
Since $F_J$ is an invertible binary matrix, one can change it to the identity matrix by permuting columns and adding a column to another column.
Therefore, through a sequence of affine diffeomorphisms corresponding to the operations $\OP$ and $\OR$, we may assume that $F_J$ is the identity matrix.
This can be done for each $J$ so that we may assume that
\[\varphi(x_j)=y_j \text { or }y_j+\beta_j \quad \text{ for every }j\in
\{1,2,\ldots,n\}.\]
Finally, through an affine diffeomorphism corresponding to the operation $\OC$, we may assume that $\varphi(x_j)=y_j$ for every $j$ by \eqref{op2coho}.
This means that after a successive application of the operations $\OP, \OC$ and $\OR$, we reach $A=B$ because $\varphi(\alpha_j)=\beta_j$, $\alpha_j=\sum_{i=1}^nA^i_jx_i$ and $\beta_j=\sum_{i=1}^nB^i_jy_i$ for every $j\in \{1,2,\ldots,n\}$, proving the proposition.
\end{proof}

\section{Unique Decomposition of Real Bott manifolds}\label{sect:decom}

We say that a real Bott manifold is \emph{indecomposable} if it is not diffeomorphic to a product of more than one real Bott manifolds.
The purpose of this section is to provide a graph theoretical proof of Theorem~\ref{main1} in Section~\ref{sect:intro}.
An algebraic proof can be found in \cite{masu08}.

The \emph{disjoint union} $D_1 \oplus D_2$ of two digraphs $D_1$ and $D_2$ is a digraph on the disjoint union of $V(D_1)$ and $V(D_2)$ such that $A(D_1 \oplus D_2) = A(D_1) \cup A(D_2)$.
Obviously, both $D_1$ and $D_2$ are acyclic if and only if so is $D_1  \oplus  D_2$.
A digraph is \emph{connected} if it is connected as an undirected graph.
An acyclic digraph $D$ is \emph{indecomposable} if all acyclic digraphs Bott equivalent to $D$ are connected.

For example, in Figure~\ref{Fig:dec}, three acyclic digraphs $D_1$, $D_2$, and $D_3$ are shown.
Obviously these digraphs are Bott equivalent.
Notice that $D_3$ is connected but $D_1$ and $D_2$ are disconnected.
%Thus Bott equivalent graphs of a connected graph are not necessarily connected.
Thus connectedness is not necessarily preserved under Bott equivalence.
Each component of $D_1$ and $D_2$ is indecomposable and yet $\{1,2\}\neq \{2,3\}$.
Hence a decomposition of an acyclic digraph into indecomposable acyclic digraphs does not induce a unique partition of the vertex set.

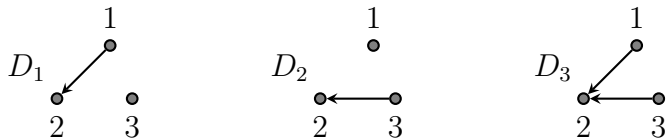
\begin{figure}
    \centering
\tikzstyle{v}=[circle, draw, solid, fill=black!50, inner sep=0pt, minimum width=4pt]
\tikzstyle{every edge}=[->,>=stealth,draw]
\begin{tikzpicture}[thick,scale=0.5]
    \node [v] (to) {};
   \node [v] (le) [below left of=to]{}
    edge[<-] (to);
  \node [v] (ri) [right of=le]{};
    \node [above] at (to.north) {$1$};
    \node [below] at (le.south) {$2$};
   \node [below] at (ri.south) {$3$};
   \node [above left] at (le.north) {$D_1$};
   \node [v] (le2) [right of=ri,node distance=2.5cm]{};
    \node [v] (ri2) [right of=le2]{}
    edge[->] (le2);
    \node [v] (to2) [above right of =le2]{};
    \node [above] at (to2.north) {$1$};
    \node [below] at (le2.south) {$2$};
   \node [below] at (ri2.south) {$3$};
   \node [above left] at (le2.north) {$D_2$};
   \node [v] (le3) [right of=ri2,node distance=2.5cm]{};
    \node [v] (ri3) [right of=le3]{}
    edge[->] (le3);
    \node [v] (to3) [above right of=le3]{}
    edge[->] (le3);
    \node [above] at (to3.north) {$1$};
    \node [below] at (le3.south) {$2$};
   \node [below] at (ri3.south) {$3$};
   \node [above left] at (le3.north) {$D_3$};
\end{tikzpicture}
\caption{Three Bott equivalent acyclic digraphs $D_1$, $D_2$, and $D_3$.}\label{Fig:dec}
\end{figure}

Surprisingly the next theorem will show that a decomposition of an acyclic digraph into indecomposable acyclic digraphs is unique up to Bott equivalence.
Note that a real Bott manifold $M(A)$ for a Bott matrix $A \in \T(n)$ is indecomposable if and only if the acyclic digraph associated to $A$ is indecomposable.
Therefore Theorem~\ref{main1} is equivalent to the following theorem.

\begin{theo} \label{thm:UDP}
Suppose that $D_1,D_2,\ldots,D_k$ and $H_1,H_2,\ldots,H_\ell$ are indecomposable acyclic digraphs.
If $\bigoplus_{i=1}^k D_i$ is Bott equivalent to $\bigoplus_{j=1}^\ell H_j$, then $k=\ell$ and there is a permutation $\sigma$ on $\{1, \ldots, k \}$ such that $D_i$ is Bott equivalent to $H_{\sigma(i)}$  for each $i=1, \ldots, k$.
\end{theo}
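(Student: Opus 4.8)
The natural strategy is to find an \emph{invariant} of the Bott equivalence class of an acyclic digraph $D$ that carries enough information to reconstruct the multiset $\{D_1,\dots,D_k\}$ of indecomposable factors, up to Bott equivalence. My first step would be to show that the number of indecomposable factors $k$ and the total number of vertices $n$ are themselves Bott invariants; the latter is trivial, and for the former I would look at how local complementation $D\ast v$ and slide $D\slides{u}{v}$ interact with the connected-component structure. Both operations only add or delete arcs among vertices that are ``linked'' through $v$ (resp.\ through $u,v$), so they can only merge components or split one component into pieces that were previously joined. The key point to isolate is: if $D = D'\oplus D''$ with $v\in V(D')$, then $D\ast v = (D'\ast v)\oplus D''$, and similarly for slides when $u,v$ lie in the same component; so each operation acts ``locally'' on a single component while leaving the others untouched. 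Hence any maximal Bott-invariant way of grouping the vertices would refine through these moves.

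The crux is that a decomposition into \emph{indecomposable} pieces need not respect the connected components of $D$ itself (as Figure~\ref{Fig:dec} shows), so I cannot simply take connected components. Instead I would proceed by induction on $n$. If $D=\bigoplus_i D_i$ is Bott equivalent to $\tilde D = \bigoplus_j H_j$, pick a vertex $v$ and perform a single local complementation or slide; by the observation above this changes exactly one $D_i$ (and one $H_j$) to a Bott-equivalent digraph, keeping it indecomposable, so the statement is invariant under a single move. It therefore suffices to handle the case where $\tilde D$ is obtained from $D$ by a \emph{single} operation, and then chain. For a single move I would argue: the move lives inside one component $C$ of $D$; after the move, $C$ becomes some digraph $C'$ Bott equivalent to $C$, and the remaining components are unchanged. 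Decomposing $C$ and $C'$ into indecomposables and invoking the inductive hypothesis on the (strictly smaller, if $C\neq D$) subgraph $C$ closes the induction; the base case is when $D$ itself is connected and indecomposable, where there is nothing to prove.

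The real obstacle, and where I expect to spend most of the effort, is the case where $D$ is \emph{connected} (so the inductive reduction above stalls because $C=D$) yet \emph{decomposable}: then some sequence of moves disconnects $D$, and I need to show the resulting pieces are the ``right'' ones independent of which sequence I chose. Here I would try to characterize, purely in terms of Bott-invariant data, the finest partition $V(D) = V_1\sqcup\cdots\sqcup V_k$ such that $D$ is Bott equivalent to a disjoint union with vertex sets $V_1,\dots,V_k$ — a candidate is the partition induced by the equivalence relation ``$u\sim w$ iff in \emph{every} digraph Bott equivalent to $D$, $u$ and $w$ lie in the same component.'' One then shows (i) this relation has exactly $k$ classes when $D\simeq\bigoplus_{i=1}^k D_i$ with each $D_i$ indecomposable, by checking each $D_i$ is a single class (this is essentially the definition of indecomposable) and classes cannot merge across the $\oplus$; and (ii) restricting any digraph Bott equivalent to $D$ to one such class yields a digraph Bott equivalent to the corresponding $D_i$, because the moves used to certify Bott equivalence can be sorted so that those touching different classes commute and act on disjoint vertex sets. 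Carefully justifying that the local-complementation and slide moves can be ``localized'' to these invariant vertex classes — in particular that a slide $D\slides{u}{v}$ requires $N^-_D(u)=N^-_D(v)$, which forces $u,v$ into the same class — is the technical heart of the argument, and I would treat it as its own lemma before assembling the proof.
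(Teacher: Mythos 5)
There is a genuine gap, and it sits exactly at the point you identify as the ``technical heart.'' Your reduction to a single move claims that each local complementation or slide ``changes exactly one $D_i$ (and one $H_j$),'' and later that the precondition $N^-_D(u)=N^-_D(v)$ for a slide ``forces $u,v$ into the same class.'' Both claims fail for slides on roots: any two roots have $N^-_D(u)=N^-_D(v)=\emptyset$, so a slide $D\slides{u}{v}$ with $u,v$ roots of \emph{different} components is always legal, and it can merge those components or, applied in reverse, split one. This is precisely what Figure~\ref{Fig:dec} illustrates, and it is the only source of difficulty; everything else (local complementations and slides at non-roots) does respect components, as the paper notes before Lemma~\ref{lemma:simplify}. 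Worse, your candidate canonical partition --- $u\sim w$ iff $u,w$ lie in the same component of \emph{every} Bott equivalent digraph --- is refuted by that same example: for $D_1$ in Figure~\ref{Fig:dec} the relation has three singleton classes (vertex $2$ is separated from $1$ in $D_2$ and from $3$ in $D_1$), while the indecomposable decomposition has only $k=2$ factors. There simply is no Bott-invariant partition of the vertex set recovering the factors; the theorem asserts uniqueness of the factors up to Bott equivalence, not uniqueness of a vertex partition, and your claims (i) and (ii) cannot be repaired as stated.

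The paper's proof instead isolates the root slides and treats them by linear algebra. Lemma~\ref{lemma:simplify} lets one push all slides on $L_0(D)$ to the front, reducing to the case where $H$ is obtained from $D$ by root slides only; Lemma~\ref{lemm:D=H} then says this happens iff $D$ and $H$ agree off the roots and the row spaces of $\cormatrix{L_0(\cdot),V(\cdot)\setminus L_0(\cdot)}{}$ coincide. The key structural input is Lemma~\ref{full rank}: an indecomposable digraph with $\ge 2$ vertices has $\rank\cormatrix{L_0(D_i),V(D_i)\setminus L_0(D_i)}{D_i}=\abs{L_0(D_i)}$, which via Lemma~\ref{lemm:indecom} produces, inside $H$, a set $X$ of roots with $\abs{X}=\abs{L_0(D_i)}$ such that $H[X\cup(V(D_i)\setminus L_0(D_i))]$ is connected and Bott equivalent to $D_i$. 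The matching $\sigma$ is then built from the non-root vertex sets, which \emph{are} preserved. If you want to salvage your outline, you must replace the ``canonical partition'' step with an argument of this kind that accounts for the root slides explicitly.
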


We prepare several lemmas before the proof of Theorem~\ref{thm:UDP}.
A vertex of a digraph $D$ is called a \emph{root} if it has no in-neighbor.
In other words, a vertex $v$ of $D$ is a root if and only if the column vector in $A_D$ corresponding to $v$ is zero.
Let $L_0(D)$ be the set of roots of $D$.

Obviously local complementations do not change connected components.
It is easy to check that slides at non-roots do not change connected components as well, because if we want to slide $uv$ for non-roots $u$ and $v$, then $u$ and $v$ must share a common in-neighbor.
The only trouble that might arise is that slides at roots may change connected components as we have seen in Figure~\ref{Fig:dec}.

The following lemma is easy to check.

\begin{lemm} \label{lemma:simplify}
Let $D$ be an acyclic digraph.
Let $u,v$ be distinct roots of $D$.
\begin{enumerate}[(i)]
\item
If $N^-_D(x)=N^-_D(y)$
for distinct $x,y \in V(D)\setminus L_0(D)$,
then
\[
 D\slides{x}{y}\slides{u}{v} = D \slides{u}{v} \slides{x}{y}.
\]
\item
For each vertex $x$ of $D$,
we have
\[ D \ast x \slides{u}{v}=D \slides{u}{v} \ast x. \]
\end{enumerate}
\end{lemm}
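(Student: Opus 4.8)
\emph{Plan.} Both parts say that two operations commute, and it is cleanest to observe first that every operation in sight replaces the arc set $A(D)$ by $A(D)\Delta T$ for a set $T$ of ordered pairs determined by $D$: a slide $D\slides{a}{b}$ uses $T=\{(b,w):w\in N_D^+(a)\}$, while a local complementation $D\ast x$ uses $T=N_D^-(x)\times N_D^+(x)$. Since $\Delta$ is commutative and associative, two such operations commute on $D$ as soon as (a) both orders of composition are legitimate (the condition $N^-(a)=N^-(b)$ required before a slide on $ab$ still holds after performing the other operation), and (b) each operation uses the \emph{same} target set on $D$ as on the digraph obtained from $D$ by the other operation. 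So I would verify (a) and (b) in each case, using repeatedly that a root has no in-neighbour, that the operations in the statement never create a new in-neighbour for $u$ or $v$ (so these stay roots throughout), that a local complementation at a root is the identity, and that a local complementation at $x$ changes neither $N_D^-(x)$ nor $N_D^+(x)$.

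For (i): since $u,v$ are roots and $x,y$ are not, the sets $\{u,v\}$ and $\{x,y\}$ are disjoint. Because $x\neq v$ the slide on $uv$ does not alter $N_D^+(x)$, and because $u\neq y$ the slide on $xy$ does not alter $N_D^+(u)$; hence in either order both compositions turn $A(D)$ into $A(D)\Delta\{(v,w):w\in N_D^+(u)\}\Delta\{(y,w):w\in N_D^+(x)\}$, giving (b). For (a): a slide on $xy$ toggles only arcs leaving $y$, so it cannot create an arc into the root $u$ or $v$, and $u,v$ remain roots; a slide on $uv$ toggles $v$ in or out of $N^-(w)$ exactly for $w\in N_D^+(u)$, and $x\in N_D^+(u)\iff u\in N_D^-(x)=N_D^-(y)\iff y\in N_D^+(u)$, so $x$ and $y$ get $v$ toggled together and $N^-(x)=N^-(y)$ survives.

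For (ii): if $x\in\{u,v\}$ then $D\ast x=D$, and since $\slides{u}{v}$ keeps $u,v$ roots also $(D\slides{u}{v})\ast x=D\slides{u}{v}$, so both sides equal $D\slides{u}{v}$. If $x\notin\{u,v\}$, legitimacy is again immediate ($\slides{u}{v}$ toggles only arcs leaving $v$, and $\ast x$ only arcs leaving in-neighbours of $x$, so $u,v$ stay roots). For (b) I would note that $\slides{u}{v}$ fixes $N_D^+(x)$ (as $x\neq v$) and changes $N_D^-(x)$ --- by toggling $v$ --- exactly when $(u,x)\in A(D)$, while $\ast x$ changes $N_D^+(u)$ --- by toggling all of $N_D^+(x)$ --- exactly when $u\in N_D^-(x)$, i.e. again exactly when $(u,x)\in A(D)$. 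If $(u,x)\notin A(D)$ neither target set depends on the order and the conclusion follows as in (i). If $(u,x)\in A(D)$, then in the two orders the local complementation toggles $N_D^-(x)\times N_D^+(x)$ versus $(N_D^-(x)\Delta\{v\})\times N_D^+(x)$, whose symmetric difference is $\{v\}\times N_D^+(x)$ by the identity $(P\times R)\Delta(Q\times R)=(P\Delta Q)\times R$; and the slide toggles $\{v\}\times N_D^+(u)$ versus $\{v\}\times(N_D^+(u)\Delta N_D^+(x))$, whose symmetric difference is also $\{v\}\times N_D^+(x)$. Hence the net change is the same in both orders.

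I expect the only non-mechanical point to be this last case of (ii): when $u$ is an in-neighbour of $x$, a slide on $uv$ (which alters $x$'s in-neighbourhood by $v$) and a local complementation at $x$ (which alters $u$'s out-neighbourhood by $N_D^+(x)$) genuinely interact, and the heart of the argument is checking, via the product-set identity above, that the two interference terms are the identical set $\{v\}\times N_D^+(x)$. Everything else is a direct consequence of (a), (b), and the elementary remarks about roots.
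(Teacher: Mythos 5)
Your proof is correct and complete. The paper gives no argument for this lemma (it is introduced only with ``the following lemma is easy to check''), so there is nothing to compare against; your reduction of both a slide and a local complementation to a symmetric difference $A(D)\Delta T$, together with the observation that the two target sets are independent of the order except in the single interacting case $u\in N_D^-(x)$ of (ii), is a clean way to fill in the omitted verification, and the computation that both interference terms equal $\{v\}\times N_D^+(x)$ is indeed the only nontrivial point. One justification should be tightened: ``a slide on $xy$ toggles only arcs leaving $y$, so it cannot create an arc into the root $u$ or $v$'' is not quite a reason, since an arc leaving $y$ may well point into some vertex; the correct (and equally short) reason, which you use implicitly elsewhere, is that every toggled arc has its head in an out-neighbourhood ($N_D^+(x)$ for the slide on $xy$, $N_D^+(u)$ for the slide on $uv$, $N_D^+(x)$ for $\ast x$), and a root, having no in-neighbour, never lies in any out-neighbourhood, so no toggled arc can point into $u$ or $v$. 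The same remark applies to the parenthetical legitimacy claim in case (ii). With that phrase repaired, the argument is airtight.
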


By Lemma~\ref{lemma:simplify}, one can push all slides on $L_0(D)$ to the left.
So it suffices to consider only slides on $L_0(D)$ when we are concerned with the indecomposability of connected components of $D$.
Clearly slides do not change the set of roots.

For subsets $X$ and $Y$ of $V(D)$, we denote by $\cormatrix{X,Y}{D}$ the submatrix of $A_D$ whose rows correspond to $X$ and columns to $Y$, and by $\row\cormatrix{X,Y}{D}$ the vector subspace of
$(\Z/2)^{|Y|}$ generated by row vectors in $\cormatrix{X,Y}{D}$.
For a set $X$  of vertices of $D$, we write $D\setminus X$ to denote the subgraph obtained by deleting vertices in $X$ and all the edges incident with a vertex in $X$.
For a vertex $v$ of $D$, we simply write $D\setminus v$ for $D\setminus \{v\}$.

\begin{lemm} \label{lemm:D=H}
Let $D$ and $H$ be acyclic digraphs such that $V(D)=V(H)$.
Then $H$ can be obtained from $D$ by applying slides only on $L_0(D)$ if and only if the following two conditions hold.
\begin{enumerate}
\item $D\setminus L_0(D)=H\setminus L_0(H)$.
\item $L_0(D)=L_0(H)$ and $\row\cormatrix{L_0(D),V(D)\setminus L_0(D)}{D} =\row\cormatrix{L_0(H), V(H)\setminus L_0(H)}{H}$.
\end{enumerate}
\end{lemm}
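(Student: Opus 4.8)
The plan is to build a precise dictionary between slides on roots and elementary row operations on the adjacency matrix, and then to reduce the whole statement to a standard fact about matrices over $\F_2$.

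First I would record what a slide $D'\slides{u}{v}$ does when both $u$ and $v$ are roots of $D'$ (so $N^-_{D'}(u)=N^-_{D'}(v)=\emptyset$ and the slide is legal). By the definition of a slide, the arc $(v,w)$ is toggled precisely when $(u,w)$ is an arc of $D'$; equivalently, in $A_{D'}$ the row indexed by $v$ is replaced by the sum over $\ZZ$ of the rows indexed by $u$ and $v$, and all other rows are unchanged. Since $u$ and $v$ are roots, their columns in $A_{D'}$ are zero, so this operation creates no self-loop at $v$ and no arc into $u$ or into any other root; hence the set of roots is unchanged, the induced subdigraph on the non-roots is unchanged, and acyclicity is preserved (stated in the excerpt). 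Consequently, if $H$ is obtained from $D$ by slides only on $L_0(D)$, then along the whole sequence the set of roots stays equal to $L_0(D)$ (so every such slide is indeed between two current roots and $L_0(H)=L_0(D)$), the subdigraph on $V(D)\setminus L_0(D)$ is never touched (giving condition (1)), and each slide is an elementary row operation on the rows of $\cormatrix{L_0(D),V(D)\setminus L_0(D)}{D}$, which preserves the row space (giving condition (2)). This is the ``only if'' direction.

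For the ``if'' direction, assume (1) and (2), and write $L:=L_0(D)=L_0(H)$ and $W:=V(D)\setminus L$. The columns indexed by $L$ are zero in both $A_D$ and $A_H$, and by (1) the induced subdigraphs on $W$ coincide; hence $A_D$ and $A_H$ differ only in the block of rows indexed by $L$, and there only in the columns indexed by $W$. By (2) the two $|L|\times|W|$ binary matrices $P:=\cormatrix{L,W}{D}$ and $Q:=\cormatrix{L,W}{H}$ have the same row space. It therefore suffices to transform $P$ into $Q$ by a sequence of operations of the form ``add the row indexed by $u$ to the row indexed by $v$'' with $u\neq v$ in $L$: each such operation is realized by the slide on $uv$ applied to the current digraph (by the first paragraph the vertices of $L$ remain roots throughout, so every such slide is legal and is a slide on $L_0(D)$), and after all of them the resulting adjacency matrix equals $A_H$ everywhere. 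The remaining claim is pure linear algebra over $\F_2$: two matrices with the same number of rows and the same row space are connected by a sequence of ``add a row to another row'' operations. I would prove this by reducing $P$ and $Q$ to reduced row echelon form — the RREF is determined by the row space together with the number of rows — so both reduce to the same matrix $R$; composing the reduction of $P$ to $R$ with the reverse of the reduction of $Q$ to $R$ (slides are involutive, so a sequence of slides is undone by applying it in reverse order) gives the desired transformation. Every row operation used, including the row transpositions that arise in Gaussian elimination, is a product of ``add a row to another row'' operations — equivalently, transvections generate $\GL_{|L|}(\F_2)$ — so nothing further is needed.

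The main obstacle, such as it is, lies in the bookkeeping rather than in any single hard idea: one must check that carrying out these row operations as slides never enlarges or shrinks the set of roots (so that the hypothesis ``slides only on $L_0(D)$'' is genuinely satisfied at every intermediate step) and never disturbs the part of the digraph that is already correct, and one must be slightly careful that ``add a row to another row'' alone generates $\GL(\F_2)$, since a naive echelon-form argument also wants to swap rows. Both points are elementary, but they are where the care goes.
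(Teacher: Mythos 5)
Your proposal is correct and follows essentially the same route as the paper: slides on roots are exactly row additions in $\cormatrix{L_0(D),V(D)\setminus L_0(D)}{D}$, row swaps are realized as three such additions (the paper's $D\slides{x}{y}\slides{y}{x}\slides{x}{y}$, your ``transvections generate $\GL_{|L|}(\F_2)$''), and equality of row spaces gives a common reduced row echelon form through which the two matrices are connected. Your extra bookkeeping that roots stay roots and the non-root part is never disturbed is a correct elaboration of what the paper leaves implicit.
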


\begin{proof}
The forward implication is trivial, because slides on roots are row additions in the matrix $\cormatrix{L_0(D),V(D)\setminus L_0(D)}{D}$.

To prove the converse, let us assume that (1) and (2) hold.
Let $Y=V(D)\setminus L_0(D)$.
If necessary, we can interchange two rows corresponding to $x$ and $y$ of $\cormatrix{L_0(D),Y}{D}$ by replacing  $D$ with $D\slides{x}{y}\slides{y}{x} \slides{x}{y}$.
Hence, all elementary row operations can be obtained by slides on roots. By (2), $\cormatrix{L_0(D),Y}{D}$ and $\cormatrix{L_0(H),Y}{H}$ have the same reduced row echelon form. Therefore, $\cormatrix{L_0(H),Y}{H}$ can be obtained from $\cormatrix{L_0(D),Y}{D}$ by elementary row operations. This together with (1) implies the lemma.
\end{proof}

\begin{lemm} \label{full rank}
Let $D$ be an acyclic digraph with at least two vertices.
If $D$ is indecomposable, then
\[\rank\cormatrix{L_0(D), V(D)\setminus L_0(D)}{D}=\lvert L_0(D)\rvert.\]
\end{lemm}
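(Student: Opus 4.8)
The plan is to prove the contrapositive: if the rows of $\cormatrix{L_0(D),V(D)\setminus L_0(D)}{D}$ indexed by $L_0(D)$ are linearly dependent over $\ZZ$, then $D$ is \emph{not} indecomposable. Write $Y=V(D)\setminus L_0(D)$. First I would record two simple observations. Since a root has no in-neighbor, there is no arc joining two roots, so every out-neighbor of a root lies in $Y$; hence a root $u$ whose row in $\cormatrix{L_0(D),Y}{D}$ is the zero vector has $N_D^+(u)=\emptyset$, and being a root it also has $N_D^-(u)=\emptyset$, so $u$ is an isolated vertex of $D$. Secondly, for any two roots $u,v$ the slide condition $N_D^-(u)=N_D^-(v)$ holds trivially (both sets are empty), the slide $D\slides{v}{u}$ has the sole effect of adding the row of $v$ to the row of $u$ in the adjacency matrix, and, as recalled in Section~\ref{section:acyclic digraphs}, slides take acyclic digraphs to acyclic digraphs and never change the set of roots.

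Now assume the root rows are dependent. Then there is a nonempty $S\subseteq L_0(D)$ with $\sum_{v\in S}(\text{row of }v)=0$ in $(\ZZ)^{|Y|}$; fixing $u\in S$, this says the row of $u$ equals $\sum_{v\in S\setminus\{u\}}(\text{row of }v)$. I would apply, in any order, the slides $D\slides{v}{u}$ for $v$ ranging over $S\setminus\{u\}$, obtaining an acyclic digraph $D'$ Bott equivalent to $D$ with $L_0(D')=L_0(D)$. Because the slide $D\slides{v}{u}$ changes only the row of $u$, these slides do not interfere with one another, so after performing all of them the row of $u$ has become its original value plus $\sum_{v\in S\setminus\{u\}}(\text{row of }v)$, which is $0$. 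By the first observation, $u$ is an isolated vertex of $D'$, and since $|V(D')|=|V(D)|\ge 2$, the digraph $D'$ is disconnected. Hence $D$ admits a disconnected Bott equivalent digraph, so $D$ is not indecomposable, establishing the contrapositive.

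I do not anticipate a genuine obstacle. The only points needing care are that the chosen sequence of slides really zeroes out the row of $u$---which is exactly where the non-interference remark is used, since $D\slides{v}{u}$ leaves the row of $v$ intact---and that one never leaves the class of acyclic digraphs on the vertex set $V(D)$ during the process, which is guaranteed by the closure properties of slides noted above. Alternatively, the row reduction could be routed through Lemmas~\ref{lemma:simplify} and \ref{lemm:D=H}, noting that $\cormatrix{L_0(D),Y}{D}$ row-reduces to a matrix with a zero row exactly when its rows are dependent; but the explicit construction above requires no row swaps and seems the cleanest route.
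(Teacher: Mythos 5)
Your proposal is correct and follows essentially the same route as the paper: the paper also observes that a linear dependence among the root rows allows one to apply slides (row additions) on $L_0(D)$ to produce a Bott equivalent acyclic digraph with an isolated vertex, hence disconnected, contradicting indecomposability. Your write-up merely fills in the details the paper leaves implicit (roots have no arcs among themselves, slides between roots are always admissible and do not interfere, and the set of roots is preserved).
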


\begin{proof}
Since $D$ is connected, $D$ must have a non-root.
If $\rank\cormatrix{L_0(D), V(D)\setminus L_0(D)}{D}<\lvert L_0(D)\rvert$, then there is a set of row vectors whose sum is zero.
So by applying slides, we obtain an acyclic digraph $H$ having a vertex with no out-neighbors and therefore $H$ is disconnected.
This contracts to the assumption that $D$ is indecomposable.
\end{proof}

\begin{lemm}\label{lemm:indecom}
  Let $D$ be an indecomposable acyclic digraph.
  Let $Y=V(D)\setminus L_0(D)$.
  Let $G$ be an acyclic digraph.
  If $H$ is obtained from $D\oplus G$ by applying slides on
  $L_0(D\oplus G)$, then there exists a set $X$ of roots of $H$ such
  that $|X|=|L_0(D)|$ and $H[X\cup Y]$ is connected, where $H[X\cup
  Y]=H\setminus (V(H)\setminus (X\cup Y))$.
\end{lemm}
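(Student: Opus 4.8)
The plan is to translate connectedness of $H[X\cup Y]$ into a statement about bases of the single vector space $U:=\row\cormatrix{L_0(D),Y}{D}$ over $\Z/2$, and to extract from the indecomposability of $D$ the fact that \emph{every} basis of $U$ is ``connected'' in a suitable sense.

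First dispose of the case $\abs{V(D)}=1$: then $Y=\emptyset$ and any single root of $H$ works as $X$. So assume $\abs{V(D)}\ge 2$; then $Y\neq\emptyset$ and, with $s:=\abs{L_0(D)}$, Lemma~\ref{full rank} gives $\dim U=s$. Let $C_1,\dots,C_r$ (with $r\ge 1$) be the connected components of the digraph $D\setminus L_0(D)$. Now record the structure of $H$. Slides on roots change neither the set of roots nor the induced subgraph on the non-roots, so $H\setminus L_0(H)=(D\oplus G)\setminus L_0(D\oplus G)=(D\setminus L_0(D))\oplus(G\setminus L_0(G))$; in particular $H[Y]=D\setminus L_0(D)$ has components $C_1,\dots,C_r$ and $H$ has no arc between $Y$ and $Z:=V(G)\setminus L_0(G)$, while $L_0(H)=L_0(D)\sqcup L_0(G)$. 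Moreover slides on roots act as row additions on the matrix $\cormatrix{L_0(H),Y\cup Z}{H}$, so (by Lemma~\ref{lemm:D=H}, or directly) its row space equals that of the block-diagonal matrix $\cormatrix{L_0(D\oplus G),Y\cup Z}{D\oplus G}$, namely $U\oplus U'$ inside $(\Z/2)^Y\oplus(\Z/2)^Z$, where $U'=\row\cormatrix{L_0(G),Z}{G}$.

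The crucial point is: \emph{if $D$ is indecomposable and $\abs{V(D)}\ge 2$, then for every basis $q_1,\dots,q_s$ of $U$ the bipartite graph $\mathcal B$ with parts $\{q_1,\dots,q_s\}$ and $\{C_1,\dots,C_r\}$, in which $q_i$ is joined to $C_j$ exactly when $q_i$ has a nonzero coordinate in $C_j$, is connected.} To see this, note that slides on roots realize all elementary row operations on $\cormatrix{L_0(D),Y}{D}$ --- row additions by definition, and row transpositions via $D\slides{x}{y}\slides{y}{x}\slides{x}{y}$ as in the proof of Lemma~\ref{lemm:D=H} --- and these generate $\GL_s(\Z/2)$; hence each ordered basis $q_1,\dots,q_s$ of $U$ arises as the row family of $\cormatrix{L_0(D'),Y}{D'}$ for some acyclic digraph $D'$ that is Bott equivalent to $D$ and has the same vertex set $Y$ and the same induced subgraph on $Y$. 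As $D$ is indecomposable, $D'$ is connected; contracting each component $C_j$ of $D'[Y]=D[Y]$ to a point identifies the underlying graph of $D'$ with $\mathcal B$, so $\mathcal B$ is connected.

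To finish, let $\mu_1,\dots,\mu_m$ be the rows of $\cormatrix{L_0(H),Y\cup Z}{H}$ and let $\bar\mu_k\in(\Z/2)^Y$ be the restriction of $\mu_k$ to the $Y$-coordinates. Since the $\mu_k$ span $U\oplus U'$, the $\bar\mu_k$ span $U$, so we may take $X$ to be a set of $s$ roots of $H$ whose rows restricted to $Y$ form a basis of $U$; then $\abs X=s=\abs{L_0(D)}$. The graph $H[X\cup Y]$ is the disjoint union of $C_1,\dots,C_r$ together with the vertices of $X$, which are pairwise non-adjacent (being roots of $H$) and joined to $y\in Y$ exactly when the corresponding $\bar\mu_k$ has a $1$ in coordinate $y$; contracting each $C_j$ turns it into precisely the graph $\mathcal B$ for this basis of $U$, which is connected by the crucial point. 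Hence $H[X\cup Y]$ is connected, as required. The main obstacle is the crucial point --- ensuring that slides on roots reach \emph{every} basis of $U$ through digraphs Bott equivalent to $D$, and that connectedness of such a digraph is faithfully captured by the contraction $\mathcal B$; the rest is bookkeeping with row spaces and induced subgraphs.
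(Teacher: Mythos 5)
Your proposal is correct and follows essentially the same route as the paper: both use the invariance of the row space $\row\cormatrix{L_0(\cdot),Y}{\cdot}$ under slides on roots together with Lemma~\ref{full rank} to extract a set $X$ of roots of $H$ whose rows restricted to $Y$ form a basis of $U$, and both derive connectedness of $H[X\cup Y]$ from the indecomposability of $D$ via the fact that any such basis is realized by a digraph obtained from $D$ by slides on roots (the content of Lemma~\ref{lemm:D=H}). Your detour through the contracted bipartite graph $\mathcal B$ is only a presentational variant of the paper's direct identification of $H[X\cup Y]$ with such a digraph.
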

% If $D$ has a single vertex, then this is trivial.
% So we may assume that $D$ has non-roots.
% Let $Z=V(G)\setminus L_0(G)$.
% Since $H\setminus Z$ can be obtained from $(D\oplus G)\setminus Z$ by applying slides on $L_0((D\oplus G)\setminus Z)=L_0(D\oplus G)$,
% we deduce from Lemma~\ref{lemm:D=H} that
%  \[\row\cormatrix{L_0(D\oplus G),Y}{D\oplus
%    G}
%  =\row \cormatrix{L_0(H),Y}{H}.\]
%Remember that all elementary row operations can be obtained by slides on roots.
%Let $R$ be the reduced row echelon form of both $\cormatrix{L_0(D\oplus G),Y}{D\oplus G}$ and $\cormatrix{L_0(H),Y}{H}$.
%Let $X$ be the set of vertices in $L_0(H)$ whose corresponding row of
%$R$ is non-zero.
%Then $L_0(H[X\cup Y])=X$.
%Since $D$ is indecomposable and $L_0(G)$ has no arcs to $Y$, we have $|X| = |L_0(D)|$ by Lemma~\ref{full rank}, and
%\begin{align*}
%    \row\cormatrix{L_0(D),Y}{D} &= \row\cormatrix{L_0(D\oplus G),Y}{D\oplus G} \\
%    &= \row \cormatrix{L_0(H),Y}{H} = \row \cormatrix{X,Y}{H} =\row \cormatrix{X,Y}{H[X\cup Y]}.
%\end{align*}
%%Since we can permute roots by slides as we did in  the proof of
%%Lemma~\ref{lemm:D=H},
%By permuting roots of $H$,
%we may assume that $X=L_0(D)$.
% Then $D$ and $H[X\cup Y]$ satisfy (1) and (2) of Lemma~\ref{lemm:D=H} and therefore $H[X\cup Y]$ can be obtained from $D$ by applying slides on $L_0(D)$.
%  Since $D$ is indecomposable, $H[X\cup Y]$ must be connected.
\begin{proof}
 If $D$ has a single vertex, then this is trivial.
 So we may assume that $D$ has non-roots.
 Let $Z=V(G)\setminus L_0(G)$.
 Since $H\setminus Z$ can be obtained from $(D\oplus G)\setminus Z$ by applying slides on $L_0((D\oplus G)\setminus Z)=L_0(D\oplus G)$, we deduce from Lemma~\ref{lemm:D=H} that
 \[\row\cormatrix{L_0(D\oplus G),Y}{D\oplus   G} =\row \cormatrix{L_0(H),Y}{H}.\]
 By Lemma~\ref{full rank}, $\rank\cormatrix{L_0(D),Y}{D}=|L_0(D)|$.
 Therefore there exists a subset $X$ of $L_0(H)$  such that $|X|=|L_0(D)|$ and the rows in $\cormatrix{X,Y}{H}$ are linearly independent.

 By considering an isomorphic copy of $H$, we may assume that $X=L_0(D)$.
 This implies that
 \[\row \cormatrix{L_0(H),Y}{H}  =\row \cormatrix{X,Y}{H} =\row \cormatrix{X,Y}{H[X\cup Y]}.\]
 Since $L_0(G)$ has no arcs to $Y$ in $D\oplus G$,
 \[
 \row\cormatrix{L_0(D\oplus G),Y}{D\oplus
   G}=\row\cormatrix{X,Y}{D}.\]
 Therefore $\row\cormatrix{X,Y}{D} =\row \cormatrix{X,Y}{H[X\cup Y]}$.

 Then $D$ and $H[X\cup Y]$ satisfy (1) and (2) of Lemma~\ref{lemm:D=H} and therefore $H[X\cup Y]$ can be obtained from $D$ by applying slides on $L_0(D)$.
 Since $D$ is indecomposable, $H[X\cup Y]$ must be connected.
\end{proof}

Now, we complete the proof of Theorem~\ref{thm:UDP}.

\begin{proof}[Proof of Theorem~\ref{thm:UDP}]
We claim that it is enough to consider the case when $H$ is obtained from $D$ through slides on $L_0(D)$.
Suppose there is a sequence of local complementations and slides to apply to $D$ to obtain $H$.
By Lemma~\ref{lemma:simplify}, we may assume that slides on $L_0(D)$ are done first.
Let $H'$ be the acyclic digraph obtained by applying all the slides on $L_0(D)$.
Then $H$ can be obtained from $H'$ by applying slides on non-roots and local complementations.
Since these operations do not change the connected components, $H$ and $H'$ must have the identical set of connected components up to Bott equivalence.
By reversing these slides and local complementations, we can observe that each component of $H'$ is indecomposable and therefore $H'$ is  the disjoint union of $\ell$ indecomposable acyclic digraphs.
This proves the claim.

Then by Lemma~\ref{lemm:D=H},
\begin{equation} \label{rowDH}
\begin{split}
L_0(D)&=L_0(H),\\
D\setminus L_0(D)&=H\setminus L_0(H),\\
\row\cormatrix{L_0(D),V(D)\setminus L_0(D)}{D}&
=\row\cormatrix{L_0(H),V(H)\setminus L_0(H)}{H}.
\end{split}
\end{equation}

Let $A_i=\cormatrix{L_0(D_i),V(D_i)\setminus L_0(D_i)}{D_i}$.
Then
\[
\rank \cormatrix{L_0(D),V(D)\setminus L_0(D)}{D}
=\rank
\begin{pmatrix}
  A_1 &&&0\\
  & A_2\\
  & &\ddots\\
  0&&&A_k
\end{pmatrix}=\sum_{i=1}^k \rank A_i.\]
Similarly if $B_i=\cormatrix{L_0(H_i),V(H_i)\setminus L_0(H_i)}{H_i}$, then
\[\rank \cormatrix{L_0(H),V(H)\setminus L_0(H)}{H}=\sum_{i=1}^\ell \rank B_i.\]
By \eqref{rowDH}, $\sum_{i=1}^k \rank A_i=\sum_{i=1}^\ell \rank B_i$.
By Lemma~\ref{full rank}, $\rank A_i=|L_0(D_i)|$ if $D_i$ has at least two vertices and therefore $|L_0(D)|-\sum_{i=1}^k \rank A_i$ is the number of isolated vertices in $D$.
By \eqref{rowDH},  $|L_0(D)|-\sum_{i=1}^k \rank A_i=|L_0(H)|-\sum_{i=1}^\ell \rank B_i$ and therefore $D$ and $H$ should have the same number of isolated vertices.
Let $s$ be the number of isolated vertices and we may assume that $D_1, \ldots, D_{k-s}$ and $H_1, \ldots, H_{\ell-s}$ have non-roots.
Lemma~\ref{lemm:indecom} implies that there exists a function $\sigma:\{1,2,\ldots,k-s\}\to \{1,2,\ldots,\ell-s\}$ such that for each $i\in\{1,2,\ldots,k-s\}$, $V(D_i)\setminus L_0(D_i)$ stays in one connected component $H_j$ of $H$ for some $j=\sigma(i)$ and moreover $|V(H_j)|\ge |V(D_i)|$.
Similarly $V(H_j)\setminus L_0(H_j)$ stays in one connected component $D_m$ of $D$ for some $m$ with $|V(D_m)|\ge |V(H_j)|$.
Since $V(D_i)\setminus L_0(D_i)\subseteq V(D_m)$, we conclude that $i=m$, $|V(D_i)|=|V(H_j)|$, and $V(D_i)\setminus L_0(D_i)=V(H_j)\setminus L_0(H_j)$.
We may assume that $V(D_i)=V(H_j)$ by permuting roots.
% by slides if necessary as we did in the proof of Lemma~\ref{lemm:D=H}.
Then it is easy to observe that $D_i$ and $H_j$ satisfy (1) and (2) of Lemma~\ref{lemm:D=H} from \eqref{rowDH} and therefore $D_i$ and $H_{\sigma(i)}$ are Bott equivalent. Clearly $\sigma$ is injective because $D_i$ and $H_{\sigma(i)}$ must share non-roots.
Since $V(D)=V(H)$, $\sigma$ should be bijective and $k=\ell$.
\end{proof}

\section{Numerical invariants of real Bott manifolds} \label{sec:invariant}
In this section, we produce numerical invariants of real Bott manifolds $M(A)$ using the Bott matrix $A\in \T(n)$ or its associated acyclic digraph $D_A$.

\subsection{Type.}
Recall that $L_0(D)$ is the set of roots of $D$, that is, the set of vertices having no in-neighbors.
For $k\ge 1$, we define $L_k(D)$ to be the set of roots in $D\setminus \bigcup_{i=0}^{k-1} L_i(D)$.
Alternatively we may define $L_k(D)$ to be the set of vertices $v$ of $D$ such that a longest directed path ending at $v$ has exactly $k$ arcs in $D$.
We call $L_k(D)$ for $k\ge 0$ the \emph{$k$-th level set} of $D$ and the sequence
\[
(|L_0(D)|,|L_1(D)|,|L_2(D)|,\ldots,|L_{n-1}(D)|)
\]
the \emph{type} of $D$.
\begin{prop}\label{prop:level}
If $D$ and $H$ are Bott equivalent acyclic digraph, then $\abs{L_k(D)}=\abs{L_k(H)}$ for all nonnegative integer $k$.
In particular, Bott equivalent acyclic digraphs have the identical type.
\end{prop}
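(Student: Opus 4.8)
The plan is to show that each of the three generating operations—$\OP$, $\OC$, $\OR$—preserves every level set $|L_k|$, since Bott equivalence is generated by these. The operation $\OP$ only relabels vertices, so it trivially preserves the type. The real content is in analyzing the local complementation $D*v$ and the slide $D\slides{u}{v}$. For this, I would first observe that $L_k(D)$ depends only on the lengths of longest directed paths ending at each vertex, which in turn is governed by the ``height function'' $h_D(w) := $ the number of arcs in a longest directed path ending at $w$; then $L_k(D) = h_D^{-1}(k)$ and I want to prove $h_{D*v} = h_D$ and $h_{D\slides{u}{v}} = h_D$ as functions on the common vertex set.

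\textbf{Height is invariant under local complementation.} For $D*v$, the key point is that an arc $(x,y)$ is added or deleted only when $x \in N_D^-(v)$ and $y \in N_D^+(v)$, i.e.\ only ``across'' $v$. I would argue that $h_{D*v}(w) = h_D(w)$ by considering any directed path $P$ in $D*v$ ending at $w$ and showing there is a directed path in $D$ ending at $w$ of length at least $\abs{P}$, and symmetrically (using $D*v*v = D$). Given a path $P$ in $D*v$, replace each ``new'' arc $(x,y)$ (one in $A(D*v)\setminus A(D)$) by the two-arc detour $x \to v \to y$, which exists in $D$ since $x\in N_D^-(v)$ and $y\in N_D^+(v)$; this only lengthens the path and the vertex $v$ is not repeated too badly—one must check acyclicity keeps the resulting walk from revisiting a vertex, or instead argue directly at the level of longest-path lengths via the recursion $h_D(w) = 1 + \max\{ h_D(x) : x \in N_D^-(w)\}$ (with $\max\emptyset = -1$). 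I suspect the cleanest route is this recursion plus induction on the acyclic order: $h_{D*v}(w)$ can only differ from $h_D(w)$ if the in-neighborhood of $w$ changed, which forces $w \in N^+(v)$, and then a short case analysis on whether $v$ or its in-neighbors realize the max shows the value is unchanged.

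\textbf{Height is invariant under a slide.} For $D\slides{u}{v}$, where $N_D^-(u) = N_D^-(v)$, only out-arcs of $v$ change, and they change to agree/disagree with out-arcs of $u$; in particular the in-neighborhoods of all vertices except possibly those in $N_D^+(u)$ are unchanged, and even for $w \in N_D^+(u)$ the set $N^-(w)$ changes only by adding/removing $v$—but $u$ and $v$ have the same in-neighbors and hence the same height $h_D(u) = h_D(v)$ (immediate from the recursion), so swapping $v$ for $u$ (or vice versa) in the max defining $h(w)$ does not change it. Thus $h_{D\slides{u}{v}} = h_D$ follows again from the recursion and induction along an acyclic order.

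\textbf{The main obstacle} I anticipate is being careful about the recursion base case and the interaction of the two operations with acyclic orderings—specifically, ensuring that after a local complementation the natural recursion still runs along \emph{some} acyclic order (the excerpt already notes $D*v$ and $D\slides{u}{v}$ stay acyclic, which I would cite). Once the recursion $h_D(w) = 1 + \max\{h_D(x): x\in N_D^-(w)\}$ is established and the two neighborhood-change analyses above are in place, the proposition follows: $|L_k(D)| = |h_D^{-1}(k)|$ is preserved by each generator, hence by all of Bott equivalence, and since $|V(D)| = \sum_k |L_k(D)|$ is fixed, the whole type vector is an invariant.
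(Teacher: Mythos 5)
Your proposal is essentially correct, and your preferred route (the height recursion $h_D(w)=1+\max\{h_D(x):x\in N_D^-(w)\}$ plus induction along an acyclic order) is genuinely different from the paper's argument. The paper works directly with a single \emph{longest} path $P_w$ ending at $w$: for a local complementation, if $P_w$ were destroyed in $D*v$ then some arc $(x,y)$ of $P_w$ was removed, hence $(x,v),(v,y)\in A(D)$ and $v\notin P_w$ by acyclicity, so the detour $x\to v\to y$ would produce a \emph{longer} path in $D$, a contradiction; for a slide, if an arc $(v,y)$ of $P_w$ is removed then $(u,y)\in A(D)$ and one replaces $v$ by $u$ in $P_w$. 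This one-shot contradiction neatly sidesteps the vertex-repetition problem you correctly flagged in the naive ``replace every new arc by a detour'' idea. Your recursion route avoids path surgery altogether and the case analysis does close: for $D*v$, the in-neighborhood of $w\in N_D^+(v)$ changes by $N_D^-(w)\,\Delta\, N_D^-(v)$, but $v$ itself stays in $N^-(w)$ and $h_D(v)>h_D(x)$ for every $x\in N_D^-(v)$, so the maximum is untouched; for the slide, $u$ stays in $N^-(w)$ and $h_D(u)=h_D(v)$ (immediate from the recursion since $N^-_D(u)=N^-_D(v)$), so toggling $v$ is harmless. The one point you should nail down is the induction scaffold: you need an ordering that is acyclic for both $D$ and the transformed graph so that the inductive hypothesis applies to all in-neighbors in either graph. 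For $D*v$ any acyclic order of $D$ works (new arcs go from before $v$ to after $v$), but for $D\slides{u}{v}$ an arbitrary acyclic order of $D$ need not remain acyclic; you must observe that $D$ contains no directed path between $u$ and $v$ (else, since $N^-_D(u)=N^-_D(v)$, the last vertex before $v$ on such a path would close a cycle through $u$), so $D\cup(D\slides{u}{v})$ is acyclic and a common order exists. With that supplied, your argument is complete and arguably cleaner than the paper's.
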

\begin{proof}
It is enough to show that both local complementations and slides do not change $L_i(D)$.
Let $w$ be a vertex in $L_i(D)$.
Then there is a longest directed path $P_w$ in $D$  ending at $w$ with exactly $i$ arcs.

Let us first show that $w\in L_i(D*v)$  for any $v\in V(D)$.
It is enough to show that $P_w$ is a path in $D*v$ as well, because, if so, then  a longest path in $D*v$ ending at $w$ will be a path in $(D*v)*v=D$ as well.
Suppose that $P_w$ is not a path in $D*v$.
Then the local complementation at $v$ must remove at least one arc $(x,y)$ of $P_w$ and therefore $(x,v)$ and $(v,y)$ are arcs of $D$.
Since $D$ is acyclic, $v$ is not on $P_w$.
Then by replacing the arc $(x,y)$ by a path $xvy$ in $P_w$, we can find a path longer than $P_w$ in $D$, contradictory to the assumption that $P_w$ is a longest path ending at $w$.
This proves the claim that $L_i(D)=L_i(D*v)$ for all $i$.

Now let us prove that $w\in L_i(D\slides{u}{v})$ for $u\neq v\in V(D)$ with $N_D^-(u)=N_D^-(v)$.
Again, it is enough to show that$D\slides{u}{v}$ has a path of length $i$ ending at $w$; because if $D\slides{u}{v}$ has a longer path ending at $w$, then so does $D$ by the fact that $(D\slides{u}{v})\slides{u}{v}=D$.
We may assume that  the slide along $uv$ removes at least one arc $(x,y)$ of $P_w$.
Then $v=x$ and both $(v,y)$ and $(u,y)$ are arcs of $D$.
Since $N_D^-(u)=N_D^-(v)$, we can replace $v$ by $u$ in $P_w$ to obtain a path of the same length in $D\slides{u}{v}$.
This completes the proof.
\end{proof}

The level sets of $D_A$ for a Bott matrix $A\in \T(n)$ can be described in terms of $A$ as follows.
We identify the $i$-th vertex $v_i$ of $D_A$ with $i$ for $i\in\{1,2,\ldots,n\}$.
Then $L_0(D_A)$ can be identified with $L_0(A):=\{ j\mid A_j=0\}$.
We define $A(1)$ to be the matrix obtained from $A$ by removing all $j$-th columns and $j$-th rows for all $j\in L_0(A)$.
Then $L_1(A):=\{ j\mid A(1)_j=0\}$ can be identified with $L_1(D_A)$.
Inductively we define $A(k)$ for $k\ge 2$ to be the matrix obtained from $A(k-1)$ by removing all $j$-th columns and $j$-th rows for all $j$ with $A(k-1)_j=0$.
Then $L_k(A):=\{j\mid A(k)_j=0\}$ can be identified with $L_k(D_A)$.

The type of $D_A$ can also be described in terms of $H^*(M(A);\ZZ)$ as follows.
First note that $|L_0(D_A)|$ agrees with the dimension of the $\ZZ$-vector space
\[
N(\MH_0)=\{ x\in \MH_0^1 \mid x^2=0\}
\]
where $\MH_0=H^*(M(A);\ZZ)$ and $\MH_0^1$ denotes the degree one part of $\MH_0$, i.e., $\MH_0^1=H^1(M(A);\Z/2)$.
We define $\MH_1$ to be the quotient graded ring of $\MH_0$ by the ideal generated by $N(\MH_0)$ and inductively define $\MH_k$ for $k\ge 2$ to be the quotient graded ring of $\MH_{k-1}$ by the ideal generated by $N(\MH_{k-1})$.
Then $|L_k(D_A)|$ agrees with $\dim N(\MH_k)$.

\medskip
Let us try to motivate the notion of types geometrically.
%The geometrical motivation of the type of $D_A$ is as follows.
The \emph{twist number} of a real Bott tower is the number of nontrivial topological fibrations $M_j \to M_{j-1}$ in the iterated $\R P^1$ bundle \eqref{tower}. Why is the twist number well-defined, even though a real Bott manifold may be represented by many real Bott tower structures?
To justify this, suppose that $A$ is an upper triangular binary matrix and $M(A)$ has the iterated $\R P^1$ bundle as in \eqref{tower}. Then, a trivial fibration in \eqref{tower} corresponds to a root of $D_A$ as well as a zero-column of $A$. Since $|L_0(D_A)|$ is invariant under Bott equivalence by Proposition~\ref{prop:level}, the twist number of $M(A)$ is indeed a topological invariant, and therefore it is well-defined. We remark that the twist number of a complex Bott manifold is discussed in \cite{Ch-Su-2011}.

This observation leads us to consider another bundle structure of $M(A)$.
One can see that $M(A)$ admits an iterated bundle structure
\[
    M(A)=X_m \longrightarrow X_{m-1} \longrightarrow \cdots \longrightarrow X_1 \longrightarrow X_0=\{\text{a point}\},
\]
where $X_{k+1} \to X_{k}$ is an $(\R P^1)^{|L_k(D_A)|}$-bundle and $m$ is the number of non-zero components in the type of $D_A$.
Since the type is invariant under Bott equivalence  by Proposition~\ref{prop:level}, if $M(A)$ admits an iterated bundle structure each of whose fiber is the product of $\R P^1$s, then the height of the iterated bundle must be greater than or equal to $m$ and the dimension of the $k$th fibration is $|L_k(D_A)|$.

\subsection{Rank.} \label{subsection:rank}

The operations $\OP$, $\OC$ and $\OR$ in Section~\ref{sect:matrix} preserve the rank of a Bott matrix $A$ in $\T(n)$, denoted $\rank A$.
Therefore the matrix rank is an invariant of Bott equivalent acyclic digraphs.
Here is a geometrical meaning of $\rank A$.

\begin{prop} \label{sumbetti}
For $A\in \T(n)$,
\[
\quad \dim_\Q H^1(M(A);\Q)\leq n-\rank A\quad\text{and}\quad\sum_{i=0}^n \dim_{\Q} H^i(M(A);\Q)=2^{n-\rank A}.
\]
\end{prop}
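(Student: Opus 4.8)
The plan is to identify $H^*(M(A);\Q)$ with a ring of invariants and then reduce everything to linear algebra over the binary field. Since $M(A)=T^n/G(A)$ and, by Lemma~\ref{free}, $G(A)$ is a finite group acting freely on $T^n$, the covering projection $T^n\to M(A)$ induces, via the transfer (valid with $\Q$ coefficients because $\abs{G(A)}$ is invertible in $\Q$), a graded ring isomorphism $H^*(M(A);\Q)\cong H^*(T^n;\Q)^{G(A)}$. I would then write $H^*(T^n;\Q)$ as the exterior algebra $\Lambda_\Q[t_1,\dots,t_n]$ with $\deg t_j=1$, where $t_j$ is pulled back from the $j$-th circle factor, and read off the action from \eqref{ai}: the involution $a_i$ acts on the $j$-th coordinate circle either trivially (by the rotation $z\mapsto -z$ when $j=i$, or by the identity when $A^i_j=0$) or by complex conjugation (when $A^i_j=1$), so that $a_i^*t_j=(-1)^{A^i_j}t_j$; since $A^i_i=0$, the case $j=i$ fits this formula harmlessly. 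Consequently, for $g=a_1^{\epsilon_1}\cdots a_n^{\epsilon_n}$ with $\epsilon=(\epsilon_1,\dots,\epsilon_n)\in(\ZZ)^n$ one has $g^*t_j=(-1)^{\epsilon\cdot A_j}t_j$, where $A_j$ denotes the $j$-th column of $A$ and $\cdot$ the standard bilinear form over $\ZZ$.

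Next I would pass to the monomial basis $t_S:=\prod_{j\in S}t_j$, indexed by subsets $S\subseteq\{1,\dots,n\}$, of this exterior algebra. Since $g^*t_S=(-1)^{\epsilon\cdot(\sum_{j\in S}A_j)}t_S$, the monomial $t_S$ is $G(A)$-invariant precisely when $\sum_{j\in S}A_j=0$ in $(\ZZ)^n$, that is, when the characteristic vector $\mathbf 1_S$ lies in $\Ker A$, $A$ being regarded as a linear endomorphism of $(\ZZ)^n$. Hence $\{t_S:\mathbf 1_S\in\Ker A\}$ is a basis of $H^*(T^n;\Q)^{G(A)}$, which yields $\sum_{i=0}^n\dim_\Q H^i(M(A);\Q)=\abs{\Ker A}=2^{n-\rank A}$ by the rank--nullity theorem over $\ZZ$. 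In degree one, $t_{\{j\}}$ is invariant exactly when the column $A_j$ is zero, so $\dim_\Q H^1(M(A);\Q)$ equals the number of zero columns of $A$; since the nonzero columns already span the column space, their number is at least $\rank A$, so the number of zero columns is at most $n-\rank A$, which is the first assertion.

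I do not anticipate a serious obstacle: this is an exterior-algebra invariant-subspace computation. The only point demanding care is the bookkeeping of how each $a_i$ acts on the individual circle factors --- separating the orientation-reversing case $A^i_j=1$ from the two orientation-preserving cases --- so as to obtain the uniform formula $a_i^*t_j=(-1)^{A^i_j}t_j$; once this is in place, the enumeration of invariant monomials and its identification with $\Ker A$ over the binary field are routine.
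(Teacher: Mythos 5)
Your proof is correct and follows essentially the same route as the paper: identify $H^*(M(A);\Q)$ with $H^*(T^n;\Q)^{G(A)}$ and count invariant monomials via the kernel of $A$ over $\ZZ$. The only difference is that the paper cites \cite{Ishida-10} for the dimension formula \eqref{HiMA}, whereas you derive it directly from the diagonal action $a_i^*t_j=(-1)^{A^i_j}t_j$ on the exterior algebra, which is a correct and self-contained substitute.
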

\begin{proof}
Remember that $M(A)$ is the quotient of $T^n$ by the action of a finite group $G(A)$; see Section~\ref{sect:rbott}.
Therefore it follows from \cite[Theorem 2.4 in p.120]{bred72} that
\begin{equation*} \label{MATn}
H^i(M(A);\Q)=H^i(T^n;\Q)^{G(A)} \quad\text{for every $i$,}
\end{equation*}
where the right hand side above denotes the invariants of the induced $G(A)$-action on $H^*(T^n;\Q)$.
Then it is shown in \cite[Lemma 2.1]{Ishida-10} that
\begin{equation} \label{HiMA}
    \dim_{\Q} H^i(M(A);\Q) = \left|\{J\subseteq\{1, \ldots, n\} \mid \lvert J\rvert=i, \sum_{j\in J} A_j = 0\}\right|.
\end{equation}
In particular, $\dim_\Q H^1(M(A);\Q)$ agrees with the number of zero column vectors in $A$ which is less than or equal to $n-\rank A$, proving the inequality in the proposition.

It also follows from \eqref{HiMA} that
\[\sum_{i=0}^n \dim_{\Q} H^i(M(A);\Q) = |X|,\]
where $X=\{J\subseteq\{1,\ldots, n\} \mid \sum_{j\in J} A_j = 0\}$.
Since an element of $X$ corresponds to a vector in the null space of $A$ whose dimension is $n-\rank A$, we have $|X|=2^{n-\rank A}$, proving the equality in the proposition.
\end{proof}

\begin{rema}
A \emph{real toric variety} is the submanifold of a toric variety fixed by the canonical involution induced from the conjugation.
It is known that a real Bott manifold is a real toric variety \cite{ma-pa08}.
While this paper is under review, formulas for the rational Betti numbers of real toric varieties have been established by Suciu-Trevisan~\cite{Tre-2012,Su-Tr} and Choi-Park \cite{Ch-Pa-2010}.
Their formulas hold not only for the $\Q$-coefficient but also for a general coefficient ring which has $2$ as a unit.
Using their formulas, one can compute the Betti number of real Bott manifold with an arbitrary coefficient ring which has $2$ as a unit.
%One important observation is that a real Bott manifold does not admit an odd torsion in the integral homology group.
However, no efficient formula for the \emph{integral} (co)homology of real Bott manifolds is known.
In particular, we do not know whether, for $k \ge 2$, a real toric Bott manifold has a $2^k$-torsion which is not a $2^{k-1}$-torsion in its integral homology group.
\end{rema}

In 1985, Halperin~\cite{halperin85} conjectured that if a compact torus $T^k$ of dimension $k$ acts on a finite dimensional topological space $X$ \emph{almost freely}, i.e., any isotropy subgroup is finite, then
\[
\sum_{i=0}^{\dim X} \dim_\Q H^i(X;\Q)\ge 2^k.
\]
This conjecture is called the \emph{toral rank conjecture} or the \emph{Halperin-Carlsson conjecture}.
No counterexamples and some partial affirmative answers are known, see \cite{al-pu93} for example.
We show that the toral rank conjecture holds for real Bott manifolds.

\begin{theo} \label{HaCa}
Let $A\in\T(n)$.
If $M(A)$ admits an effective topological action of a torus $T^k$ of dimension $k$, then
$$\sum_{i=0}^n \dim_{\Q} H^i(M(A);\Q)\ge 2^k.$$
\end{theo}

\begin{proof}
Choose any point $p\in M(A)$ and consider a map $f_p\colon T^k\to M(A)$ defined by $f_p(t):=tp$.
Let $\pi_1(X)$ be the fundamental group of a topological space $X$ and let $Z(\pi_1(X))$ be the center of $\pi_1(X)$.
Since $M(A)$ is an aspherical manifold, $f_p$ induces an injective homomorphism $\pi_1(T^k)\to Z(\pi_1(M(A)))$ (which implies that the action of $T^k$ on $M(A)$ is almost free), see \cite{co-ra71}.
Kamishima and Nazra~\cite[Proposition 2.4]{ka-na08} have shown that the intersection of $Z(\pi_1(M(A)))$ and the commutator subgroup $[\pi_1(M(A)), \pi_1(M(A))]$ of $\pi_1(M(A))$ is a trivial subgroup of $\pi_1(M(A))$.
Hence, the natural map
$$
    \pi_1(T^k) \to Z(\pi_1(M(A))) \to \pi_1(M(A))/[\pi_1(M(A)), \pi_1(M(A))]=H_1(M(A);\Z)
$$ is injective.
It follows that
$$
k\le \dim_\Q H_1(M(A);\Q)=\dim_\Q H^1(M(A);\Q).
$$ Hence the theorem follows from Proposition~\ref{sumbetti}.
\end{proof}

\subsection{Odd out-degree vertices.}
The \emph{odd height} of an acyclic digraph $D$ is the length of a longest directed path ending at a vertex of odd out-degree.
In other words, it is the maximum $k$ such that $L_k(D)$ contains a vertex of odd out-degree.
If $D$ has no vertex of odd out-degree, then we define the odd height of $D$ to be $\infty$.

\begin{prop}
  Bott equivalent acyclic digraphs have the same odd height.
\end{prop}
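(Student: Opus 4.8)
The plan is to reduce the statement to showing that a single local complementation and a single slide each preserve the odd height. This suffices because Bott equivalence is generated by these two operations together with graph isomorphism, and a graph isomorphism obviously preserves out-degrees, directed paths, and hence the odd height. Moreover, as is shown in the proof of Proposition~\ref{prop:level}, a local complementation and a slide each fix every level set $L_k$ as a \emph{set} of vertices, not merely its cardinality; I will use this to transport a vertex of odd out-degree lying at a prescribed level from one graph to the other.

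The first real step is to record how out-degree parities move. For $D\mapsto D*v$, the out-neighbourhood of a vertex $w$ is unchanged unless $w\in N_D^-(v)$, in which case $N_{D*v}^+(w)=N_D^+(w)\,\Delta\,N_D^+(v)$; thus the parity of the out-degree of $w$ changes precisely when $\deg_D^+(v)$ is odd. Since every in-neighbour of $v$ sits at a level strictly below $\mathrm{level}(v)$, a local complementation at $v$ alters out-degree parities only at levels below $\mathrm{level}(v)$, and only when $v$ itself has odd out-degree. For $D\mapsto D\slides{u}{v}$, where $N_D^-(u)=N_D^-(v)$ (so $u$ and $v$ occupy the same level, the level of a vertex being determined by its set of in-neighbours), only the out-neighbourhood of $v$ changes, becoming $N_D^+(v)\,\Delta\,N_D^+(u)$; hence the single out-degree parity that can change is that of $v$, and only when $\deg_D^+(u)$ is odd.

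Given these two observations, the conclusion follows from a short case analysis. If $D$ has no vertex of odd out-degree, then in either operation the governing vertex --- $v$ for a local complementation, $u$ for a slide --- has even out-degree, so no parity changes at all and the result again has odd height $\infty$. Otherwise let $h<\infty$ be the odd height of $D$ and pick $w\in L_h(D)$ of odd out-degree. For a local complementation at $v$: if $\deg_D^+(v)$ is even nothing changes; if it is odd then $v$ has odd out-degree and therefore $\mathrm{level}(v)\le h$, so parities change only at levels below $\mathrm{level}(v)\le h$. Consequently every out-degree at levels $\ge h$ is untouched, $w$ still witnesses odd height $\ge h$, and no vertex above level $h$ acquires odd out-degree, so the odd height stays exactly $h$. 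The slide case runs the same way, with one boundary situation needing attention: when $\mathrm{level}(u)=\mathrm{level}(v)=h$ the only changed out-degree is that of $v$, so the chosen witness $w$ might become even, but then $u$ is itself a level-$h$ vertex of odd out-degree that the slide leaves fixed, and it serves as a replacement witness.

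I do not expect a serious obstacle. The only delicate point is precisely that boundary case of a slide acting at level $h$, where the resolution is to use the other endpoint $u$ of the slide as the surviving witness; the remaining ingredient not to be overlooked is that one must invoke the proof of Proposition~\ref{prop:level}, and not merely its statement, in order to keep each $L_k$ fixed as a set and thereby track the level at which a given vertex of odd out-degree lives.
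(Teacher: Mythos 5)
Your proposal is correct and follows essentially the same route as the paper's proof: record that a local complementation at $v$ changes out-degree parities only at in-neighbours of $v$ (and only when $\deg_D^+(v)$ is odd) and a slide on $uv$ changes only the parity of $v$ (and only when $\deg_D^+(u)$ is odd), dispose of the all-even case, and otherwise track a witness at the top odd level, using $u$ as the replacement witness in the boundary slide case exactly as the paper does. The only cosmetic difference is that you argue the odd height is preserved exactly by a level count, whereas the paper proves one inequality and invokes the involutivity of the operations.
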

\begin{proof}
Let $D$ be an acyclic digraph and let $v\in V(D)$.
Let $x\neq  y\in V(D)$ with $N_D^-(x)=N_D^-(y)$.
Then
 \begin{align}
  \label{D*v}
    \deg^+_{D*v}(w)&  \equiv
    \begin{cases}
     \deg^+_{D}(w)+\deg^+_D(v)&\pmod2   \quad\text{ if $w \in N^-_D(v)$,}\\
     \deg^+_D(w)& \pmod 2 \quad\text{ otherwise,}
    \end{cases}
    \\
    \label{Dslidexy}
    \deg^+_{D\slides{x}{y}}(w)&\equiv
    \begin{cases}
      \deg^+_D(w)+\deg^+_D(x)&\pmod 2 \quad\text{ if }w=y,\\
      \deg^+_D(w)&\pmod 2 \quad\text{ otherwise.}
    \end{cases}
  \end{align}
Therefore, if every vertex of $D$ has even out-degree, then a local complementation and a slide do not create a vertex of odd out-degree.
So we may assume that $D$ has a vertex of odd out-degree.
Let $k$ be the odd height of $D$ and let $w$ be a vertex of odd out-degree in $L_k(D)$.

Let us first consider $D*v$ for a vertex $v\in L_i(D)$.
If $w\in N_D^-(v)$, then $\deg^+_D(v)$ is even as $i>k$; so $\deg^+_{D*v}(w)$ is odd by \eqref{D*v}.
If $w\notin N_D^-(v)$, then $\deg^+_{D*v}(w)$ is again odd by \eqref{D*v}.
This proves that the odd height of $D*v$ is at least the odd height of $D$.
Since $(D*v)*v=D$, we conclude that $D$ and $D*v$ have the same odd height.

Now we claim that $L_k(D\slides{x}{y})$ has a vertex of odd out-degree in $D\slides{x}{y}$.
Suppose that $\deg^+_{D\slides{x}{y}}(w)$ is even.
Then $w=y$ and $\deg^+_D(x)$ is odd by \eqref{Dslidexy}.
We note that $x\in L_k(D)$ because $x$ and $y$ are in the same level set of $D$ and $w=y$ is in $L_k(D)$.
Therefore $x\in L_k(D)$ and $\deg^+_D(x)$ is odd, proving the claim.
Since $(D\slides{x}{y})\slides{x}{y}=D$, we conclude that $D$ and $D\slides{x}{y}$ have the same odd height.
\end{proof}

By Lemma~\ref{ori-sym} (i), for a Bott matrix $A$ in $\T(n)$, $M(A)$  is orientable if and only if the out-degree of every vertex of $D_A$ is even.
In other words, $M(A)$ is orientable if and only if the odd height of $D_A$ is $\infty$.
Hence, the notion of odd height may be thought of as a refinement of the orientability of real Bott manifolds.

\subsection{Sibling classes.}
For $x, y\in V(D)$, we say that $x\sim_D y$ if $N_D^-(x)=N_D^-(y)$.
Then $\sim_D$ is an equivalence relation on $V(D)$ and we call an equivalence class a \emph{sibling class} of $D$.
If $x\sim_D y$, then $x$ and $y$ are in the same level set; so each level $L_i(D)$ is partitioned into sibling classes.
We note that a sibling class of $D_A$ for a Bott matrix $A\in \T(n)$ corresponds to a maximal set of identical columns of $A$.
\begin{prop}
    Sibling classes are invariant under Bott equivalence.
\end{prop}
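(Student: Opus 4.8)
The plan is to verify that each of the three operations $\OP$, $\OC$, $\OR$ carries the partition of the vertex set into sibling classes to the corresponding partition of the resulting acyclic digraph. For $\OP$ this is immediate, since it corresponds to a graph isomorphism and the sibling relation $\sim_D$ is defined purely in terms of in-neighbor sets. For $\OC$ and $\OR$, which correspond to a local complementation and a slide and leave the vertex set fixed, it suffices to show that $\sim_D$ itself is unchanged. Composing these observations shows that a sequence of such operations produces a bijection of vertex sets carrying sibling classes to sibling classes, so sibling classes, and any numerical data derived from them such as the multiset of their sizes within each level set, are invariant under Bott equivalence.

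The key computation is to record how in-neighbor sets change. Under the local complementation at $v$, the only toggled arcs are the pairs $(u,w)$ with $u\in N_D^-(v)$ and $w\in N_D^+(v)$, so $N_{D*v}^-(w)=N_D^-(w)$ when $w\notin N_D^+(v)$ and $N_{D*v}^-(w)=N_D^-(w)\Delta N_D^-(v)$ when $w\in N_D^+(v)$. Under the slide on $uv$ (where $N_D^-(u)=N_D^-(v)$), the only toggled arcs are the pairs $(v,w)$ with $w\in N_D^+(u)$, so $N_{D\slides{u}{v}}^-(w)=N_D^-(w)$ when $w\notin N_D^+(u)$ and $N_{D\slides{u}{v}}^-(w)=N_D^-(w)\Delta\{v\}$ when $w\in N_D^+(u)$.

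Given these formulas I would fix two vertices $x,y$ and split into three cases according to how many of them lie in $N_D^+(v)$ (respectively $N_D^+(u)$). If neither does, both in-neighbor sets are unchanged and there is nothing to prove. If both do, each new in-neighbor set is the symmetric difference of the old one with one fixed set, and symmetric difference with a fixed set is a bijection on subsets, so $N^-(x)=N^-(y)$ holds before if and only if after. In the mixed case, say $x\in N_D^+(v)$ but $y\notin N_D^+(v)$, one checks that $x\sim y$ holds neither in $D$ nor in $D*v$: in $D$, $x\sim_D y$ would give $v\in N_D^-(x)=N_D^-(y)$, forcing $y\in N_D^+(v)$; and since $v\notin N_D^-(v)$ the arc $(v,x)$ is never toggled, so $v\in N_{D*v}^-(x)$, and repeating the argument inside $D*v$ rules out $x\sim_{D*v}y$. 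The slide case is entirely analogous, with $N_D^+(u)$ playing the role of $N_D^+(v)$, the singleton $\{v\}$ the role of $N_D^-(v)$, and $u$ the role of $v$ in the mixed case. The only non-formal input is the pair of displayed formulas for the in-neighbor sets; the subsequent case analysis is short and routine, so I do not anticipate a real obstacle. One may also remark, using the earlier proposition that level sets are Bott-invariant together with the fact that sibling classes refine level sets, that the induced partition of each level set is preserved.
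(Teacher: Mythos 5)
Your proof is correct and follows essentially the same route as the paper's: a case analysis on whether $x$ and $y$ lie in $N_D^+(v)$ (resp.\ $N_D^+(u)$), using how a local complementation or a slide alters in-neighbor sets. The only cosmetic difference is that you establish the biconditional directly via the mixed case, while the paper proves one implication and leaves the converse to the involutivity of the two operations.
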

\begin{proof}
Let $x$ and $y$ be vertices in the same sibling class of an acyclic digraph $D$.
Since $N_D^-(x)=N_D^-(y)$, for every $w\in V(D)$, both $(w,x)$ and $(w,y)$ are arcs of $D$ or neither $(w,x)$ nor $(w,y)$ are arcs of $D$.

Firstly let us consider the case $D*v$ for $v\in V(D)$.
If both $(v,x)$ and $(v,y)$ are arcs of $D$, then one easily sees that $N_{D*v}^-(x)=N_{D*v}^-(y)$.
If both $(v,x)$ and $(v,y)$ are not arcs of $D$, then the set of in-neighbors of $x$ and $y$ remains unchanged under the local complementation at $v$.
Therefore  $N_{D*v}^-(x)=N_{D*v}^-(y)$ in any case.

Secondly let us consider the case $D\slides{u}{v}$ for $u\neq v\in V(D)$ with $N_D^-(u)=N_D^-(v)$.
If both $(u,x)$ and $(u,y)$ are not arcs of $D$, then the set of in-neighbors of $x$ and $y$ remains unchanged under the slide on $uv$.
Suppose that both $(u,x)$ and $(u,y)$ are arcs of $D$.
Then, $(v,x)$ (resp. $(v,y)$) is an arc of $D\slides{u}{v}$ if and only if $(v,x)$ (resp. $(v,y)$) is not an arc of $D$.
Therefore $N_{D\slides{u}{v}}(x)=N_{D\slides{u}{v}}(y)$ in any case.
\end{proof}

By Lemma~\ref{ori-sym} (ii), for a Bott matrix $A\in \T(n)$, $M(A)$ admits a symplectic form if and only if the cardinality of every sibling class of $D_A$ is even.
Hence, the notion of sibling class can be seen as a refinement of the symplecticity of real Bott manifolds.
It is easy to see that if the cardinality of each sibling class of $D$ is even, then the odd height of $D$ must be $\infty$.
This is obvious from the topological viewpoint because every symplectic manifold is orientable.

\subsection{Cut-rank.}
Recall that, for subsets $X$ and $Y$ of the vertex set of a digraph $D$, we write $\cormatrix{X,Y}{D}$ to denote the submatrix of the adjacency matrix of $D$ whose rows correspond to $X$ and columns correspond to $Y$.
Let $\rho_D(X)=\rank \cormatrix{X,V(D)\setminus X}{D}$.
This function $\rho_D\colon 2^{V(D)} \to \mathbb{Z}$ is called the \emph{cut-rank} function of $D$.

The cut-rank function naturally appeared when studying properties of local complementations.
In 1985, Bouchet~\cite{Bouchet1985} studied the cut-rank function on undirected graphs together with local complementation.
There are motivations based on matroid theory.
In fact, the cut-rank function of a undirected bipartite graph is the Tutte connectivity function of a binary matroid having that graph as a fundamental graph, see Oum~\cite{oum05}.
Moreover local complementations of undirected graphs can be discovered when trying to find appropriate graph operations to describe matroid minors.
This allowed generalizations of theorems on binary matroids to undirected graphs.
For digraphs, in 1987, Bouchet~\cite{Bouchet1987c} studied the cut-rank function of directed graphs and proved that local complementation on directed graphs preserves the cut-rank function.
The name ``cut-rank'' was first introduced in Oum and Seymour~\cite{ou-se06}.

For our application, the cut-rank function is not preserved under sliding.
But slides can  be applied only to the siblings, and therefore we can
prove that the cut-rank function on a union of level sets is preserved as follows.

\begin{prop}\label{prop:cutrank}
  Let $D$, $H$ be Bott equivalent acyclic digraphs on $n$ vertices.
  Then,
 \begin{enumerate}
  \item[(i)]   $\rho_D(\cup_{j\in J} L_j(D))=\rho_H(\cup_{j\in J} L_i(H))$ for every subset $J$ of $\{0,1,2,\ldots,n-1\}$,
  \item[(ii)]  $\rank\cormatrix{L_j(D),L_{j+1}(D)}{D}=\rank\cormatrix{L_j(H),L_{j+1}(H)}{H}$ for all $j\in \{0,1,2,\ldots,n-2\}$.
  \end{enumerate}
\end{prop}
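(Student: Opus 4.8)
The plan is to reduce both statements to a single operation and then track how the matrices $\cormatrix{X,V(D)\setminus X}{D}$ (for $X$ a union of level sets) and $\cormatrix{L_j(D),L_{j+1}(D)}{D}$ change under one local complementation or one slide. A Bott equivalence from $D$ to $H$ consists of a sequence of local complementations and slides, carried out on a fixed vertex set, followed by a digraph isomorphism $\psi$; since $\psi$ carries longest directed paths to longest directed paths it maps $L_j(\cdot)$ to $L_j(\cdot)$, and it obviously preserves the rank of any submatrix of the adjacency matrix, so the isomorphism at the end is harmless. Moreover, by the proof of Proposition~\ref{prop:level}, neither a local complementation nor a slide changes the level sets: if $D'$ is obtained from $D$ by one such move then $L_j(D')=L_j(D)$ for every $j$. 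Hence it suffices to prove (i) and (ii) when $H=D\ast v$ or $H=D\slides{u}{v}$.

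For (i), the local complementation case is immediate from Bouchet's theorem~\cite{Bouchet1987c}, which gives $\rho_{D\ast v}=\rho_D$ as functions on $2^{V(D)}$, together with $\cup_{j\in J}L_j(D\ast v)=\cup_{j\in J}L_j(D)$. For a slide $D\slides{u}{v}$ with $N_D^-(u)=N_D^-(v)$, the vertices $u$ and $v$ lie in a common level set $L_i(D)$ because $\sim_D$ refines the partition into level sets; writing $X=\cup_{j\in J}L_j(D)$, the fact that the level sets partition $V(D)$ gives $u\in X\Leftrightarrow i\in J\Leftrightarrow v\in X$. The slide alters only the row of the adjacency matrix indexed by $v$, replacing it by its sum with the row indexed by $u$ (the condition $N_D^-(u)=N_D^-(v)$ prevents $(u,v)$ from being an arc, so no loop is created, and in any case a diagonal entry would not lie in any off-diagonal block). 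If $v\notin X$, then $\cormatrix{X,V(D)\setminus X}{D}$ has no row indexed by $v$ and is unchanged; if $v\in X$, then $u\in X$ as well, and the change is exactly the elementary row operation of adding row $u$ to row $v$ inside $\cormatrix{X,V(D)\setminus X}{D}$, which preserves rank. Either way $\rho_{D\slides{u}{v}}(X)=\rho_D(X)$.

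For (ii), the point is that both kinds of moves leave $\cormatrix{L_j(D),L_{j+1}(D)}{D}$ essentially untouched. For $D\ast v$ with $v\in L_i(D)$, the altered entries are the pairs $(a,b)$ with $a\in N_D^-(v)$ and $b\in N_D^+(v)$; since $D$ is acyclic, $a\in N_D^-(v)$ forces the level of $a$ to be $<i$ and $b\in N_D^+(v)$ forces the level of $b$ to be $>i$, so such a pair can never satisfy $a\in L_j$ and $b\in L_{j+1}$, and $\cormatrix{L_j(D),L_{j+1}(D)}{D}$ is literally unchanged. For $D\slides{u}{v}$ with $u,v\in L_i(D)$, the altered entries are the pairs $(v,w)$ with $w\in N_D^+(u)$; if $j\neq i$ then none of these has $v\in L_j$, so $\cormatrix{L_j(D),L_{j+1}(D)}{D}$ is unchanged, and if $j=i$ the alteration restricted to $\cormatrix{L_i(D),L_{i+1}(D)}{D}$ is precisely the addition of row $u$ to row $v$, again rank-preserving. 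Combining the two items with the reduction above finishes the proof.

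I expect the only genuinely delicate point to be the slide: the cut-rank function is not invariant under arbitrary slides, and what saves us is that a slide is permitted only between siblings, which forces the two rows it combines to lie in the same level set; consequently the move becomes an honest elementary row operation on exactly the submatrices appearing in (i) and (ii), or leaves them alone entirely. The observation that the arcs flipped by a local complementation always jump over a level, so that they never sit between two consecutive levels, is the other small input needed for (ii), and it uses only acyclicity.
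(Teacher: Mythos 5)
Your proof is correct, and its skeleton coincides with the paper's: reduce to a single local complementation or slide (the terminal digraph isomorphism being harmless), use that each individual move fixes every level set as shown in the proof of Proposition~\ref{prop:level}, and exploit the fact that a slide is only permitted between siblings, which lie in a common level, so that on any submatrix indexed by unions of level sets it is either invisible or an honest row addition. You diverge from the paper in two places. For the local complementation case of (i) you invoke Bouchet's invariance of the cut-rank function, whereas the paper reproves the needed instance directly: if $v\in X$ then $\cormatrix{X,Y}{D*v}$ is obtained from $\cormatrix{X,Y}{D}$ by adding the row of $v$ to the rows of the in-neighbors of $v$ in $X$, and if $v\in Y$ by adding the column of $v$ to the columns of the out-neighbors of $v$ in $Y$; since the paper itself cites Bouchet's theorem just before the proposition, either route is acceptable. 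For (ii) the paper does not argue directly but reduces to (i) via the identity $\rank\cormatrix{L_j(D),L_{j+1}(D)}{D}=\rho_D\bigl(L_j(D)\cup L_{j+2}(D)\cup\cdots\cup L_{n-1}(D)\bigr)$, valid because arcs only run from lower to strictly higher levels, so that block is the only nonzero part of the corresponding cut matrix. You instead check directly that $\cormatrix{L_j(D),L_{j+1}(D)}{D}$ is literally untouched by a local complementation (the flipped pairs $(a,b)$ straddle the level of $v$ and hence never occupy consecutive levels) and undergoes at worst the row addition $\row u\mapsto\row u+\row v$ under a slide. Your argument for (ii) is more elementary and self-contained; the paper's gets (ii) for free from (i). Both are sound, and your parenthetical observation that $N_D^-(u)=N_D^-(v)$ forces $(u,v)\notin A(D)$ is a correct (if unstated in the paper) sanity check.
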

\begin{proof}
  It is enough to prove when $H=D*v$ or $H=D\slides{u}{w}$.
  By Proposition \ref{prop:level}, $L_j(D)=L_j(H)$ for each $j$.

  (i)
  For a subset $J$ of $\{0,1,2,\ldots,n-1\}$, let $X=\cup_{j\in J} L_j(D)$ and $Y=V(D)\setminus X$.
  Let $M=\cormatrix{X,Y}{D}$ and $M'=\cormatrix{X,Y}{H}$.
  Then $\rho_D(X)=\rank M$ and $\rho_H(X)=\rank M'$.

  Let us first consider the case when $H=D*v$ for a vertex $v$ of $D$.
  Then either $v\in X$ or $v\in Y$.
  If $v\in X$, then $M$ has a row indexed by $v$ and $M'$ is obtained from $M$ by adding the row of $v$ to all rows indexed by in-neighbors of $v$ in $X$.
  If $v\in Y$, then $M$ has a column indexed by $v$ and $M'$ is obtained from $M$ by adding the column of $v$ to all columns indexed by out-neighbors of $v$ in $Y$.
  Thus, in both cases, $\rank M=\rank M'$.

  Now let us consider the case when $H=D\slides{u}{w}$ for two vertices $u$, $w$ having the same set of in-neighbors.
  Since $u$ and $w$ have the same set of in-neighbors, they belong to the same level.
  Therefore either $\{u,w\}\subseteq X$ or $\{u,w\}\subseteq Y$.
  If $\{u,w\}\subseteq Y$, then $M'=M$.
  If $\{u,w\}\subseteq X$, then $M'$ is obtained from $M$ by adding the row of $u$ to the row of $w$.
  So $\rank M=\rank M'$.
  This completes the proof of (i).

 (ii)
 Since $D$ is acyclic, there is no arc from $L_{a}(D)$ to $L_b(D)$ if $a>b$.
 Therefore
 \[
 \rank\cormatrix{L_j(D),L_{j+1}(D)}{D}= \rho_D(L_j(D)\cup L_{j+2}(D)\cup L_{j+3}(D)\cup \cdots \cup L_{n-1}(D)),\]
 and by (i), we have (ii).
\end{proof}

We remark that the invariants discussed in Section~\ref{sec:invariant} completely classify all Bott equivalence classes up to $4$ vertices but not on  $5$ vertices.
One can easily check that two acyclic digraphs in Figure~\ref{fig:unfortune case} are not Bott equivalent but have the same set of invariants.

\begin{figure}
  \centering
  \tikzstyle{v}=[circle, draw, solid, fill=black!50, inner sep=0pt, minimum width=4pt]
  \tikzstyle{every edge}=[->,>=stealth,draw]
  \begin{tikzpicture}
    \node [v] (r3) {};
    \node [v] [above of=r3] (r2) {}
    edge[->] (r3);
    \node [v] [above of=r2] (r1) {}
    edge[->] (r2);
    \node [v] [left of=r2] (l2) {};
    \node [v] [above of=l2] (l1) {}
    edge [->] (l2)
    edge [->] (r3);
  \end{tikzpicture}
  \hspace{1cm}
  \begin{tikzpicture}
    \node [v] (r3) {};
    \node [v] [above of=r3] (r2) {}
    edge[->] (r3);
   \node [v] [left of=r2] (l2) {};
    \node [v] [above of=l2] (l1) {}
    edge [->] (l2);
    \node [v] [above of=r2] (r1) {}
    edge[->] (r2)
    edge[->] (l2);
  \end{tikzpicture}
  \caption{Theses acyclic digraphs are not Bott equivalent but have the identical set of invariants discussed in this paper.}
  \label{fig:unfortune case}
\end{figure}
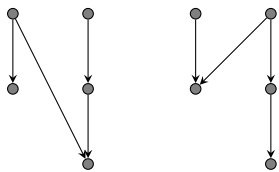

\section*{Acknowledgements}
The authors are grateful to Hanchul Park for his useful comments on Proposition~\ref{sumbetti}.
The authors are also thankful to Yoshinobu Kamishima for his comments on Theorem~\ref{HaCa}.

\end{document}